\documentclass[11pt]{amsart} 

\usepackage[margin=3.5cm]{geometry}
\usepackage{amsmath} 
\usepackage{amssymb} 
\usepackage{amsthm} 
\usepackage{color} 
\usepackage{mathrsfs} 
\usepackage{eucal}
\usepackage{braket} 
\usepackage{stmaryrd}
\usepackage{tikz}
\usepackage{tikz-cd}
\usepackage{enumerate}
%\usepackage{color} 

%-----------------------------------------------------------------------
% Fix up Math Subject classification
%

% -----------------------------------------------------------------------

\DeclareMathOperator{\sSet}{\mathcal{S}et_{\Delta}}

\DeclareMathOperator{\Sets}{\mathcal{S}et}

\newcommand{\Fun}{\mathrm{Fun}}
\newcommand{\map}{\mathrm{map}}

\newcommand{\Hom}{\mathrm{Hom}} 
\newcommand{\Ho}{\mathrm{Ho}} 
\DeclareMathOperator{\h}{\mathrm{h}}

\DeclareMathOperator{\sd}{\mathrm{sd}}

\DeclareMathOperator{\Tw}{\mathrm{Tw}} 
\DeclareMathOperator{\op}{\mathrm{op}}
\DeclareMathOperator{\id}{\mathrm{id}}

\DeclareMathOperator{\Corr}{\mathcal{C}orr}
\DeclareMathOperator{\boldR}{\mathbf{R}}

%calligraphic 

\newcommand{\cS}{\mathcal{S}}

%mathfrak 

% greek 

\theoremstyle{plain}
\newtheorem*{theorem*}{Theorem}
\newtheorem{theorem}{Theorem}[section]
\newtheorem{thmx}{Theorem}
 % "letter-numbered" theorems
\newtheorem{lemma}[theorem]{Lemma}
\newtheorem{proposition}[theorem]{Proposition}
\newtheorem{corollary}[theorem]{Corollary}

\theoremstyle{definition}
\newtheorem{definition}[theorem]{Definition}
\newtheorem{remark}[theorem]{Remark} 

\newtheorem{notation}[theorem]{Notation}

\begin{document} 

\title[Model structures for correspondences and bifibrations]{Model structures for correspondences and bifibrations}
  \author[D.\ Stevenson]{Danny Stevenson}
  \address[Danny Stevenson]
  {School of Mathematical Sciences\\
  University of Adelaide\\
  Adelaide, SA 5005 \\
  Australia}
  \email{daniel.stevenson@adelaide.edu.au}

\thanks{This research was supported under the Australian
Research Council's {\sl Discovery Projects} funding scheme (project number DP180100383).}

\subjclass[2010]{55U35, 18G30, 18G55}

\begin{abstract}
We study the notion of a bifibration in simplicial sets which 
generalizes the classical notion of two-sided discrete fibration studied in category theory.  
If $A$ and $B$ are simplicial sets we equip the category $(\mathcal{S}\mathrm{et}_{\Delta})_{/(A\times B)}$ of  
simplicial sets over $A\times B$ with the structure of a model category for 
which the fibrant objects are the bifibrations from $A$ to $B$.  
We also equip the category $\mathcal{C}\mathrm{orr}(A,B)$ 
of correspondences of simplicial sets from $A$ to $B$ with the structure 
of a model category.  We describe several Quillen equivalences relating 
these model structures with the covariant model structure on 
$(\mathcal{S}\mathrm{et}_{\Delta})_{/(B^{\mathrm{op}}\times A)}$  
\end{abstract}
\maketitle

\section{Introduction} 
\label{sec:introduction}
A useful concept from ordinary 
category theory is 
the notion of {\em profunctor}.  This 
has several incarnations.  If $A$ 
and $B$ are categories, then a profunctor 
from $A$ to $B$ 
may be viewed as a functor 
$F\colon B^{\op}\times A\to \Sets$, or 
equivalently as a colimit preserving 
functor $\mathcal{P}(A)\to\mathcal{P}(B)$ 
between the categories of presheaves 
on $A$ and $B$ respectively.  There is an 
equivalence of categories 
\begin{equation}
\label{eq:one}
[B^{\op}\times A,\Sets] \xrightarrow{\sim} 
\Corr(A,B) 
\end{equation}
between the category of profunctors from 
$A$ to $B$, and the category $\Corr(A,B)$ 
of {\em correspondences} from $A$ to $B$, i.e.\ 
functors $p\colon C\to [1]$ such that 
$p^{-1}(0) = B$ and $p^{-1}(1) = A$.

There is also an equivalence of categories 
\begin{equation}
\label{eq:two}
\Corr(A,B) \xrightarrow{\sim} 
\mathcal{DF}\mathrm{ib}(A,B) 
\end{equation}
between the category of correspondences from 
$A$ to $B$ and the category $\mathcal{DF}\mathrm{ib}(A,B)$ 
of {\em two-sided discrete fibrations} 
from $A$ to $B$.  A two-sided discrete fibration 
$(p,q)\colon X\to A\times B$ is, roughly speaking, a functor 
whose fibers $X(a,b)$ are covariant in $a\in A$ and 
contravariant in $b\in B$.  The concept was  
exploited by Street in \cite{Street1, Street2}.  

There are analogues for the notions 
of profunctor, correspondence and 
two-sided discrete fibration at the level 
of simplicial sets.  These notions have been 
studied in \cite{AF,BSP,H,HTT,J1,RV1}.  If $A$ and $B$ 
are simplicial sets, then a profunctor 
from $A$ to $B$ may be thought of as a 
simplicial map $B^{\op}\times A\to \cS$, 
where $\cS$ denotes the $\infty$-category 
of spaces (Definition 1.2.16.1 of \cite{HTT}), alternatively 
we may replace such a map with the 
left fibration over $B^{\op}\times A$ 
that it classifies.  

The notion of correspondence has a 
straightforward interpretation at this level also: 
if $A$ and $B$ are simplicial sets we shall say that a simplicial 
map $p\colon X\to \Delta^1$ is  
a correspondence from $A$ to $B$ if 
there are isomorphisms $p^{-1}(0) \simeq B$ 
and $p^{-1}(1) \simeq A$ 
(see Definition~\ref{def:correspondence}).  The 
correspondences from $A$ to $B$ form the 
objects of a category $\Corr(A,B)$, which is a 
certain subcategory of the category $(\sSet)_{/\Delta^1}$ 
of simplicial sets over $\Delta^1$ 
(see Remark~\ref{rem:categ of correspondences}).  Correspondences 
of simplicial sets feature prominently 
in Lurie's discussion of adjoint functors 
in \cite{HTT}; they also play a 
role in \cite{BSP}. 

The notion of two-sided discrete fibration also 
extends to the context of simplicial 
sets.  In \cite{HTT}, Lurie introduced 
the notion of a {\em bifibration} $(f,g)\colon X\to A\times B$ 
which is an inner fibration together with a 
condition which encodes the idea that the 
fibers $X(a,b)$ of the map $(f,g)$ depend 
covariantly on $a$ and contravariantly on 
$b$ (this notion is also considered in 
\cite{J1}).  Bifibrations are the analog 
for simplicial sets of the notion of 
two-sided discrete fibration in category theory.  
For brevity we shall use Lurie's terminology of `bifibration' 
rather than `two-sided discrete fibration'.   
In the paper \cite{RV1} Riehl and Verity 
refer to bifibrations as {\em modules}; 
they play a key role in their study of the 
formal category theory of $\infty$-categories.  

One of our aims in this paper is to 
exhibit the bifibrations in simplicial 
sets from $A$ to $B$ as the fibrant objects 
of a model category.  In Section~\ref{subsec:biv model str} 
we shall prove the following result (see Theorem~\ref{thm:biv model str}): 

\begin{thmx} 
Let $A$ and $B$ be simplicial sets.  There is 
the structure of a left proper, combinatorial 
model category on $(\sSet)_{/(A\times B)}$ for 
which the cofibrations are the monomorphisms 
and the fibrant objects are the bifibrations 
from $A$ to $B$.  
\end{thmx}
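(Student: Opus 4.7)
The plan is to apply Jeff Smith's recognition theorem for combinatorial model categories to the slice $(\sSet)_{/(A\times B)}$. I would take as generating cofibrations the set $I$ of boundary inclusions $\partial\Delta^n \hookrightarrow \Delta^n$ equipped with all possible structure maps to $A\times B$; the saturation of $I$ is the class of monomorphisms, which makes every object cofibrant and gives left properness for free once weak equivalences are known to be stable under pushout along monomorphisms.

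The crucial input is a small set $J$ of ``bivariant anodyne'' maps whose right-lifting class is precisely the class of bifibrations. Concretely, $J$ should consist of: (i) inner horn inclusions $\Lambda^n_i \hookrightarrow \Delta^n$ for $0 < i < n$, equipped with arbitrary structure maps; (ii) outer left horn inclusions $\Lambda^n_0 \hookrightarrow \Delta^n$ ($n \geq 2$) whose restriction to the initial edge $\Delta^{\{0,1\}}$ maps to an edge of $A\times B$ with degenerate $B$-component, encoding covariance in the $A$ factor; and (iii) outer right horn inclusions $\Lambda^n_n \hookrightarrow \Delta^n$ whose restriction to the final edge $\Delta^{\{n-1,n\}}$ has degenerate $A$-component, encoding contravariance in $B$. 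By design, an object has the right lifting property against $J$ if and only if it is a bifibration from $A$ to $B$ in the sense of Lurie.

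The weak equivalences $W$ would then be defined as those maps $f\colon X \to Y$ over $A\times B$ for which the induced map $\RHom_{/(A\times B)}(Y,Z) \to \RHom_{/(A\times B)}(X,Z)$ is a weak homotopy equivalence for every bifibration $Z \to A\times B$, or equivalently as the maps inverted by a fibrant replacement built from the small object argument applied to $J$. The mapping-space description makes it formal that $W$ satisfies 2-out-of-3, is closed under retracts, and is accessible; standard arguments show that $I$-injectives (the trivial Kan-style fibrations over $A\times B$) lie in $W$.

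The main obstacle is the stability of $W$ under pushout along monomorphisms, together with the containment $\mathrm{cof}(J) \subseteq W$; both of these pin down the identity $\mathrm{cof}(J) = \mathrm{cof}(I)\cap W$ required by Smith's theorem. In practice this reduces to a Joyal-style pushout-product lemma asserting that the pushout-product of a bivariant anodyne map with a monomorphism is again bivariant anodyne. The proof of such a stability statement is combinatorial but delicate, since it must simultaneously accommodate the three families of horns with their differing degeneracy constraints on distinguished edges. Once the pushout-product lemma is in hand, combinatoriality of the model structure follows from smallness of $I$ and $J$, left properness from cofibrations being monomorphisms combined with stability of $W$ under pushouts along monomorphisms, and the characterization of fibrant objects as bifibrations is automatic from the construction of $J$.
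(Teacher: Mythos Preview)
Your approach is essentially the paper's: both use a Smith-type recognition theorem (the paper invokes Proposition A.2.6.8 of \cite{HTT}) with monomorphisms as cofibrations, the bivariant anodyne maps as the distinguished set $J$, and weak equivalences defined via mapping spaces into bifibrations. The paper does establish pushout-product style results (Proposition~\ref{prop:alt desc biv anodyne} and Proposition~\ref{prop:exp}) and uses them to show the relevant mapping spaces are Kan complexes (Lemma~\ref{cor:map AxB Kan cplx}).

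There is, however, a genuine subtlety your sketch mishandles. You assert that the pushout-product lemma together with $\mathrm{cof}(J)\subseteq W$ ``pin down the identity $\mathrm{cof}(J)=\mathrm{cof}(I)\cap W$.'' This identity is \emph{false}: the paper explicitly notes (in the remark following Lemma~\ref{lem:diag biv anodyne}) that the diagonal $\Delta^n\to\Delta^n\times\Delta^n$ is a monic bivariant equivalence which is \emph{not} bivariant anodyne. Consequently $J$ does not generate all trivial cofibrations, and your claim that ``the characterization of fibrant objects as bifibrations is automatic from the construction of $J$'' requires further argument. What is actually needed is the weaker statement that $J$-injectives and $(\mathrm{cof}(I)\cap W)$-injectives agree, i.e.\ that every monic bivariant equivalence lifts against every bifibration. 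The paper secures this in Proposition~\ref{prop:char of bifibns between bifibns}: for a bifibration $f\colon X\to Y$ between bifibrations and a monomorphism $u\colon M\to N$, the induced map
\[
\map_{A\times B}(N,X)\to \map_{A\times B}(M,X)\times_{\map_{A\times B}(M,Y)}\map_{A\times B}(N,Y)
\]
is a Kan fibration between Kan complexes; when $u$ is also a bivariant equivalence it becomes a trivial Kan fibration, and surjectivity on vertices yields the lift. This mapping-space argument replaces the (unavailable) identification of $\mathrm{cof}(J)$ with the trivial cofibrations. A minor further point: your $J$ omits the $n=1$ outer horns $\{0\}\hookrightarrow\Delta^1$ and $\{1\}\hookrightarrow\Delta^1$ (with the appropriate degeneracy constraints), without which $J$-injectives are strictly larger than the class of bifibrations.
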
 

The existence of this model structure 
was known to Joyal (see \cite{J1}) but a construction 
of it has not appeared in the literature to date.  Following Joyal we call 
this model structure  
the {\em bivariant} model structure 
to reflect the covariant and contravariant 
nature of bifibrations. 

To establish the existence of this model structure 
we study bifibrations 
in some detail, replicating many properties 
of left and right fibrations established by 
Joyal and Lurie.  For instance we study the 
behaviour of bifibrations under 
exponentiation (Section~\ref{subsec:stability for 2-sided}), 
and we introduce the concept of {\em bivariant anodyne} 
map in $(\sSet)_{/(A\times B)}$ (see Section~\ref{subsec:biv anodyne}).  
We introduce the notion of {\em bivariant equivalence} 
(Section~\ref{sec:bivariant eqs}) 
and prove that a map $X\to Y$ between 
bifibrations in $(\sSet)_{/(A\times B)}$ 
is a bivariant equivalence if and only it 
is a fiberwise homotopy equivalence, generalizing 
the corresponding facts for left and right fibrations 
(Remark 2.2.3.3 of \cite{HTT}).  We also 
prove that a bifibration $X\to A\times B$ 
is a trivial Kan fibration if and only if 
its fibers are contractible Kan complexes.  
Again, this is a generalization of the corresponding 
facts for left and right fibrations (see Lemma 2.1.3.4 
of \cite{HTT}).  

In addition to the model structure for bifibrations, 
we also construct a model structure for correspondences.  
In Section~\ref{subsec:model str on corr} we prove 
the following result (see Theorem~\ref{thm:model str for corr}): 

\begin{thmx} 
Let $A$ and $B$ be $\infty$-categories.  There is the structure 
of a left proper, combinatorial model category on $\Corr(A,B)$ 
for which the cofibrations are the monomorphisms and the fibrant 
objects are the correspondences $X\to \Delta^1$ in $\Corr(A,B)$ 
for which $X$ is an $\infty$-category.  
\end{thmx}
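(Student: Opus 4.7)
My plan is to apply Jeff Smith's recognition theorem for combinatorial model categories. Take the cofibrations to be monomorphisms and the weak equivalences $W$ to be those morphisms in $\Corr(A,B)$ whose underlying simplicial map is a categorical (Joyal) equivalence. First, I realize $\Corr(A,B)$ as the full subcategory of the coslice $(B \sqcup A)/(\sSet)_{/\Delta^1}$ consisting of factorizations $B \sqcup A \xrightarrow{\iota} X \xrightarrow{p} \Delta^1$ for which the restricted maps $B \to p^{-1}(0)$ and $A \to p^{-1}(1)$ are isomorphisms. This defining condition is $\omega$-accessible, and colimits in $\Corr(A,B)$ agree with colimits in $(\sSet)_{/\Delta^1}$ (since the fiber-over-$0$ and fiber-over-$1$ functors commute with colimits there), so $\Corr(A,B)$ is locally presentable. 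The 2-out-of-3 and retract-closure properties of $W$ are inherited from the Joyal model structure, and $W$ is accessible because Joyal equivalences are and the forgetful functor $\Corr(A,B) \to \sSet$ preserves filtered colimits.

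Next, I exhibit a generating set $I$ of cofibrations and a set $J$ of generating trivial cofibrations. For $I$ I take the maps
\[
(B \sqcup A) \sqcup_{\varphi^{-1}(0) \sqcup \varphi^{-1}(1)} \partial \Delta^n \hookrightarrow (B \sqcup A) \sqcup_{\varphi^{-1}(0) \sqcup \varphi^{-1}(1)} \Delta^n
\]
indexed by $n \geq 0$, non-constant structure maps $\varphi \colon \Delta^n \to \Delta^1$, and compatible attaching maps $\varphi^{-1}(0) \sqcup \varphi^{-1}(1) \to B \sqcup A$. Every monomorphism in $\Corr(A,B)$ is a transfinite composition of pushouts of maps in $I$, since any non-degenerate simplex added by such a monomorphism necessarily has non-constant structure map (otherwise it would duplicate existing data in $B$ or $A$). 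I define $J$ by the analogous construction with the inner horn inclusions $\Lambda^n_k \hookrightarrow \Delta^n$ ($0 < k < n$) in place of the boundary inclusions; pushouts of maps in $J$ are inner anodyne on underlying simplicial sets, hence Joyal equivalences, and by left properness of the Joyal model structure so are their transfinite compositions.

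The crux of the argument is to verify Smith's condition that every $I$-injective morphism lies in $W$. Given $f \colon X \to Y$ in $\Corr(A,B)$ with the right lifting property against $I$, consider an arbitrary lifting problem $\partial \Delta^n \to X$, $\Delta^n \to Y$ over $\Delta^1$ with induced structure map $\varphi \colon \Delta^n \to \Delta^1$. If $\varphi$ is constant (say at $0$), the lifting problem lives entirely in the fibers $X_0 = Y_0 = B$, where $f$ restricts to the identity, so the filler is tautological. If $\varphi$ is non-constant, then $\varphi^{-1}(0)$ and $\varphi^{-1}(1)$ are proper faces of $\Delta^n$, hence contained in $\partial \Delta^n$; thus the pushout $X' := X \sqcup_{\partial \Delta^n} \Delta^n$, formed in $(\sSet)_{/\Delta^1}$, has unchanged fibers over $0$ and $1$. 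Consequently $X \hookrightarrow X'$ is a monomorphism in $\Corr(A,B)$ arising as a pushout of a map in $I$, so the assumed right lifting property produces a retraction $X' \to X$ over $Y$, and the composite $\Delta^n \hookrightarrow X' \to X$ is the desired filler. Hence $f$ is a trivial Kan fibration on underlying simplicial sets and so a Joyal equivalence.

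Finally, an object $X \to \Delta^1$ of $\Corr(A,B)$ is fibrant if and only if it has the right lifting property against $J$. Using that $A$ and $B$ are $\infty$-categories (to handle horns whose structure map is constant), this translates to $X \to \Delta^1$ being an inner fibration over $\Delta^1$, and since $\Delta^1$ admits no nontrivial equivalences, this is equivalent to $X$ itself being an $\infty$-category.
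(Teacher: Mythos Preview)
Your approach is essentially the same as the paper's: both invoke Smith's recognition theorem (the paper uses the formulation in Proposition A.2.6.13 of \cite{HTT}) with cofibrations the monomorphisms and weak equivalences the categorical equivalences. The paper realizes $\Corr(A,B)$ as a reflective subcategory of $(\sSet)_{/B\star A}$ while you work under $B\sqcup A$ over $\Delta^1$, and the paper invokes its Lemma~\ref{lem:char of fib in corr} for the fibrant-object characterization while you argue directly that $\Delta^1$ has no nontrivial equivalences. These are cosmetic differences.

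There is, however, a gap in your final paragraph. Smith's theorem gives a model structure in which \emph{fibrant} means having the right lifting property against all trivial cofibrations, not merely against your set $J$; you assert ``$X$ is fibrant if and only if it has the right lifting property against $J$'' without justification. One direction is immediate, but for the other you must show that if $X$ is an $\infty$-category then $X$ lifts against every monic categorical equivalence $u\colon M\to N$ in $\Corr(A,B)$. Your observation that $\Delta^1$ has no nontrivial equivalences does yield that $X\to\Delta^1$ is a categorical fibration, and hence lifts against $u$ in $(\sSet)_{/\Delta^1}$; but you still need to check that the resulting lift $N\to X$ lies in $\Corr(A,B)$, i.e.\ restricts to the identity on $B\sqcup A$. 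This holds (since $B\sqcup A\subseteq M$ and the lift extends the given map $M\to X$), but it should be said. The paper handles the same point via Lemma~\ref{lem:char of fib in corr}, showing directly that an inner fibration $X\to B\star A$ in $\Corr(A,B)$ is automatically a categorical fibration because all isomorphisms in $\h(B\star A)$ come from $A$ or $B$.

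A smaller issue: your claim that colimits in $\Corr(A,B)$ agree with those in $(\sSet)_{/\Delta^1}$ fails for coproducts (the coproduct in $(\sSet)_{/\Delta^1}$ of two correspondences has fibers $B\sqcup B$ and $A\sqcup A$). It is correct for connected colimits, which is all you actually use for the cell-complex arguments, but your deduction of local presentability from this does not go through as written; the paper instead appeals to $\Corr(A,B)$ being reflective in $(\sSet)_{/B\star A}$.
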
 
 
The model structure 
for correspondences is left induced (in the sense 
of \cite{BHKKRS}) from the Joyal model structure 
on the slice category $(\sSet)_{/B\star A}$.  
Its existence is well-known to experts --- 
it is stated, but not proved, in \cite{J1} 
and it is alluded to in \cite{HTT} for instance. 

Our other objective in this paper is to describe 
a series of Quillen equivalences linking the 
covariant model structure on $(\sSet)_{/(B^{\op}\times A)}$, 
the correspondence model structure on $\Corr(A.B)$, 
and the bivariant model structure on $(\sSet)_{/(A\times B)}$, 
which generalize the equivalences~\eqref{eq:one} 
and~\eqref{eq:two}.  Such a description 
has recently been given by Ayala and Francis 
in \cite{AF} at the level of $\infty$-categories.  
We shall refine the equivalences between 
$\infty$-categories that are established in \cite{AF} 
to Quillen equivalences between the model categories 
above.  In fact, we shall describe some additional 
Quillen equivalences, one of which is of a rather 
surprising nature.  

The twisted arrow category construction 
(see Construction 5.2.1.1 of \cite{HA}) associates 
to a simplicial set $X$ a new simplicial set 
$\Tw(X)$, equipped with a canonical map $\Tw(X)\to X^{\op}\times 
X$ which is a left fibration if $X$ is an $\infty$-category.  
If $X$ is a correspondence from 
$A$ to $B$, then base change along the map $B^{\op}\times A\to 
X^{\op}\times X$ induced by the inclusions $A\subseteq X$ 
and $B\subseteq X$ induces a functor 
$a^*\colon \Corr(A,B)\to (\sSet)_{/(B^{\op}\times A)}$ 
which participates in a series of adjunctions 
\[
\begin{tikzcd} 
(\sSet)_{/(B^{\op}\times A)} \arrow[r,shift left=4,"{a_!}"] 
\arrow[r,shift right=4,"{a_*}"]& \Corr(A,B) \arrow[l,"{a^*}"']  \\ 
\end{tikzcd} 
\]
In Section~\ref{subsec:subdivision} we shall prove 
the following result (see Theorem~\ref{thm: (delta_!,d) is a Quillen equivalence} 
and Theorem~\ref{thm:a^* a_* quillen equiv}): 
\begin{thmx} 
\label{thm:C} 
Let $A$ and $B$ be $\infty$-categories.  Then the adjoint pairs 
\[
a_!\colon (\sSet)_{/(B^{\op}\times A)} \rightleftarrows 
\Corr(A,B)\colon a^* 
\]
and 
\[
a^*\colon \Corr(A,B)\rightleftarrows (\sSet)_{/(B^{\op}\times A)}\colon a_* 
\]
are both Quillen equivalences for the correspondence model structure 
and the covariant model structure on $(\sSet)_{/(B^{\op}\times A)}$.
\end{thmx}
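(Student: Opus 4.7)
The plan is to deduce both Quillen equivalences from a single key property: $a^{*}$ is simultaneously a left and a right Quillen functor between the given model structures, and it induces an equivalence on homotopy categories via the twisted arrow construction.

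I would first verify that each pair is a Quillen adjunction. Since $\Tw$ is constructed level-wise as $\Tw(X)_{n}=\Hom([n]^{\op}\star[n],X)$ it preserves monomorphisms, and pullback preserves monos; thus $a^{*}$ preserves cofibrations, which supplies the cofibration half of both Quillen-pair conditions. The essential input for the remaining conditions is that when $X$ is a fibrant correspondence the map $\Tw(X)\to X^{\op}\times X$ is a left fibration, so $a^{*}(X)\to B^{\op}\times A$ is itself a left fibration and therefore fibrant in the covariant model structure. Combining this with the stability results of Section~\ref{subsec:biv anodyne} and an analysis of the generators of covariant-anodyne maps, one obtains that $a^{*}$ preserves trivial cofibrations (so $a^{*}$ is left Quillen in the pair $(a^{*},a_{*})$) and that $a^{*}$ preserves fibrations and trivial fibrations (so $a^{*}$ is right Quillen in the pair $(a_{!},a^{*})$).

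For the Quillen equivalence $(a_{!},a^{*})$, the plan is to verify: (i) $a^{*}$ reflects weak equivalences between fibrant objects, and (ii) the derived counit $a_{!}a^{*}(X)\to X$ is a weak equivalence for every fibrant $X\in\Corr(A,B)$. For a fibrant $X$ the fiber of $a^{*}(X)\to B^{\op}\times A$ over $(b,a)$ is the twisted-arrow fiber, i.e.\ the mapping space $\Map_{X}(b,a)$; combined with the characterization of bivariant equivalences between bifibrations as fiberwise homotopy equivalences (Section~\ref{sec:bivariant eqs}), this gives (i). Step (ii) is the geometric heart of the theorem: it is the simplicial refinement of the Ayala--Francis equivalence \cite{AF} asserting that a correspondence is recovered up to categorical equivalence by the profunctor it classifies; the strategy is to write $a_{!}$ as a colimit over twisted-arrow data and compare the resulting canonical map to $X$ using the correspondence-equivalence criterion.

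Once $(a_{!},a^{*})$ is established as a Quillen equivalence, the second Quillen equivalence follows formally. Because every object of $(\sSet)_{/(B^{\op}\times A)}$ is cofibrant and $a^{*}$ is left Quillen there, $a^{*}$ preserves \emph{all} weak equivalences, so $\mathbf{L}a^{*}$ and $\mathbf{R}a^{*}$ both coincide with $a^{*}$ on the nose; since $\mathbf{R}a^{*}$ is an equivalence of homotopy categories by the first Quillen equivalence, so is $\mathbf{L}a^{*}$, and therefore $(a^{*},a_{*})$ is also a Quillen equivalence. The principal obstacle is step (ii) above: the derived counit computation genuinely requires geometric input about the twisted arrow construction, and extracting a simplicial-level proof compatible with the correspondence model structure — rather than merely invoking the $\infty$-categorical result of \cite{AF} as a black box — is the delicate part of the argument.
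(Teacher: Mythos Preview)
Your proposal has a genuine gap in the criterion you invoke for the first Quillen equivalence. You propose to verify (i) that $a^{*}$ reflects weak equivalences between fibrant objects, and (ii) that the derived \emph{counit} $a_{!}a^{*}(X)\to X$ is a weak equivalence for every fibrant $X\in\Corr(A,B)$. But these two conditions together do \emph{not} imply that $(a_{!},a^{*})$ is a Quillen equivalence: at the level of homotopy categories, (ii) says that $\mathbf{R}a^{*}$ is fully faithful, while (i) says it is conservative --- and fully faithful already implies conservative, so (i) contributes nothing and essential surjectivity is still missing. (Any nontrivial reflective subcategory furnishes a counterexample to this pattern of argument.) The standard criterion pairs (i) with the derived \emph{unit} $X\to a^{*}\mathbf{R}a_{!}X$ being a weak equivalence for all (cofibrant) $X$, and this is precisely what the paper checks.

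The paper's computation of the unit is concrete and sidesteps the difficulty you yourself flag in step (ii). The key observation (Remark~\ref{rem:calculation of unit map}) is that $a^{*}a_{!}\simeq\sigma^{*}\sigma_{!}$, and on an $n$-simplex the unit map is the diagonal $\Delta^{n}\to\Delta^{n}\times\Delta^{n}$, which is left anodyne; a skeletal induction then shows that $X\to a^{*}a_{!}X$ is left anodyne for every $X$. Separately one shows, by inspecting generators, that $\sigma^{*}$ sends inner anodyne maps in $(\sSet)_{/B\star A}$ to left anodyne maps in $(\sSet)_{/(B^{\op}\times A)}$, whence $a^{*}$ sends categorical equivalences to covariant equivalences; combining these two facts yields the derived unit condition. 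This is far more tractable than analysing $a_{!}a^{*}(X)\to X$ directly for a fibrant correspondence $X$. Two minor points: your references to Sections~\ref{subsec:biv anodyne} and~\ref{sec:bivariant eqs} concern the \emph{bivariant} model structure, not the covariant one relevant here; and your formal deduction of $(a^{*},a_{*})$ from $(a_{!},a^{*})$ via $\mathbf{L}a^{*}\simeq\mathbf{R}a^{*}$ is correct once the first equivalence and the fact that $a^{*}$ preserves all weak equivalences are in hand, though the paper instead argues directly that $a^{*}a_{*}X\to X$ is a trivial Kan fibration for left fibrations $X$, again using the left-anodyneness of the unit.
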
 
Of note is the fact that the functor $a^*$ 
appears as both a left and right Quillen equivalence.  
There is a similar series of adjunctions 
\[
\begin{tikzcd} 
(\sSet)_{/(A\times B)} \arrow[r,shift left=4,"{d_!}"] 
\arrow[r,shift right=4,"{d_*}"]& \Corr(A,B) \arrow[l,"{d^*}"']  \\ 
\end{tikzcd} 
\]  
connecting the categories $(\sSet)_{/(A\times B)}$ 
and the category $\Corr(A,B)$, which is described 
in terms of the edgewise subdivision functor $\sd_2$ from 
\cite{BHM}.  In Section~\ref{subsec:quillen equivs} 
we prove (see Theorem~\ref{thm:d^* d_* QE}) 

\begin{thmx} 
\label{thm:D}
Let $A$ and $B$ be $\infty$-categories.  Then the adjoint pair 
\[
d^*\colon \Corr(A,B)\rightleftarrows (\sSet)_{/(A\times B)} \colon d_* 
\] 
is a Quillen equivalence for the correspondence model 
structure on $\Corr(A,B)$ and the bivariant model 
structure on $(\sSet)_{/(A\times B)}$.
\end{thmx}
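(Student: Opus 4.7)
The plan is to establish the Quillen equivalence in two phases: first verify that $(d^*, d_*)$ is a Quillen adjunction, then promote it to a Quillen equivalence by reducing to Theorem~\ref{thm:C}.

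\textbf{Phase 1 (Quillen adjunction).} In both the correspondence model structure on $\Corr(A,B)$ and the bivariant model structure on $(\sSet)_{/(A\times B)}$, the cofibrations are precisely the monomorphisms. Since $d^*$ is built from the edgewise subdivision $\sd_2$ (a left adjoint, hence colimit- and monomorphism-preserving), it preserves cofibrations. To see it preserves trivial cofibrations, it suffices to check that $d^*$ carries a set of generating trivial cofibrations of the correspondence model structure into the bivariant-anodyne maps of Section~\ref{subsec:biv anodyne}; by the left-induction of the correspondence structure from the Joyal structure on $(\sSet)_{/(B\star A)}$, these generators admit a description in terms of Joyal-anodyne maps, so the check reduces to understanding $\sd_2$ on standard horns and inner-anodyne extensions. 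Equivalently, one can verify that $d_*$ sends bifibrations $Y\to A\times B$ to correspondences $d_*Y\to\Delta^1$ whose total space is an $\infty$-category, using the stability results of Section~\ref{subsec:stability for 2-sided}.

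\textbf{Phase 2 (Quillen equivalence).} The cleanest route is to exploit Theorem~\ref{thm:C}. The idea is to identify the composite
\[
(\sSet)_{/(B^{\op}\times A)} \xrightarrow{a_!} \Corr(A,B) \xrightarrow{d^*} (\sSet)_{/(A\times B)}
\]
up to natural weak equivalence with an explicit $\sd_2$-based functor comparing the covariant and bivariant model structures, and then to verify directly that this explicit functor is a Quillen equivalence. Since $a_!$ is a Quillen equivalence by Theorem~\ref{thm:C}, the 2-out-of-3 property for Quillen equivalences then delivers the result for $d^*$. A direct alternative, which serves as a useful cross-check, is to analyze the derived counit $d^* d_* X \to X$ on every fibrant $X$: by the results of Section~\ref{sec:bivariant eqs}, a map between bifibrations is a bivariant equivalence iff it is a fiberwise homotopy equivalence, so the check reduces to comparing the fibers of $d^* d_* X$ and $X$ over points $(a,b)\in A\times B$, where the $\sd_2$ formulas should yield canonically equivalent mapping spaces.

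\textbf{Main obstacle.} The heart of the work will be the comparison step: reconciling the $\sd_2$-based formula defining $d^*$ with the $\Tw$-based construction underlying $a^*$ in Theorem~\ref{thm:C}. The difficulty comes from two sources: the opposite-category convention ($(\sSet)_{/(B^{\op}\times A)}$ carries an $\op$ that $(\sSet)_{/(A\times B)}$ does not), and the fact that $\sd_2$ is naturally compatible with the join $B\star A$ but only indirectly with the product $A\times B$. Once these combinatorial identifications are in place, the Quillen equivalence should follow relatively formally from the already-established equivalences of Theorem~\ref{thm:C}.
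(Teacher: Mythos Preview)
Your Phase~1 is roughly aligned with the paper, though the paper's actual verification is more concrete than ``understanding $\sd_2$ on standard horns'': one shows that $\delta^*$ carries each of the four types of inner horn inclusions in $(\sSet)_{/B\star A}$ (listed in the proof of Theorem~\ref{thm: (delta_!,d) is a Quillen equivalence}) to bivariant anodyne maps, using Lemma~\ref{rem:divisible exp}. Your alternative suggestion of checking $d_*$ on fibrant objects is not pursued.

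In Phase~2 your two proposed routes diverge from the paper in different ways. Your ``direct alternative'' --- the counit analysis --- is in fact what the paper does first, and the key input is not a fiberwise mapping-space comparison but rather Lemma~\ref{lem:diag biv anodyne}: the diagonal $\Delta^n\to\Delta^n\times\Delta^n$ is a bivariant equivalence (proved via dominant maps), from which a skeletal induction gives that $d^*d_*X\to X$ is a trivial Kan fibration for every bifibration $X$. However, the counit alone is not enough for a Quillen equivalence; you are missing the second half, namely that $d^*$ \emph{reflects} weak equivalences. This is where your ``main obstacle'' is resolved, and the paper's mechanism is quite different from the $2$-out-of-$3$ route you propose. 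Rather than identifying the composite $d^*a_!$ as a functor, the paper compares $d^*$ and $a^*$ \emph{after geometric realization}: there is a natural isomorphism $|\sd_2 X|\simeq |\Tw(X)|$ compatible with the canonical $|X^{\op}|\simeq |X|$, and this yields
\[
|d^*X\times_{A\times B}(A_{/a}\times B_{b/})|\ \simeq\ |a^*X\times_{B^{\op}\times A}(B^{\op}_{/b}\times A_{/a})|.
\]
Combined with the characterizations of bivariant equivalences (Theorem~\ref{thm:char of biv eq}) and covariant equivalences (Theorem~\ref{thm:7 from covariant paper}) in terms of such pullbacks, this shows $d^*f$ is a bivariant equivalence iff $a^*f$ is a covariant equivalence, and one concludes by the fact that $a^*$ reflects weak equivalences.

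Your $2$-out-of-$3$ route via $d^*a_!$ is not obviously wrong, but as stated it has a gap: you never say what the ``explicit $\sd_2$-based functor'' from $(\sSet)_{/(B^{\op}\times A)}$ to $(\sSet)_{/(A\times B)}$ is, nor how you would prove it is a Quillen equivalence without already knowing the $\Tw$/$\sd_2$ comparison. The $\op$ discrepancy you flag is real and does not disappear at the simplicial level; the paper's point is precisely that it \emph{does} disappear after geometric realization, and that this suffices because the relevant equivalence criteria are detected by weak homotopy type.
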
   

The adjoint pair 
$(d_!,d^*)$ is not a Quillen pair for these model 
structures; there is however another Quillen equivalence 
relating these model categories (see Theorem~\ref{thm:AF thm}).

In summary then the contents of this paper are as follows.  
In Section~\ref{sec:cov model str} we review some 
facts about the covariant model structure and Joyal's notion 
of dominant map that we will need later in the paper.  
In Section~\ref{sec:correspondences} we describe the model 
structure for correspondences and prove the existence of 
the Quillen equivalences from Theorem~\ref{thm:C} 
above.  In Section~\ref{sec:two sided fibns} we study 
the notion of a bifibration in simplicial sets; 
we introduce the concept of a bivariant anodyne map and 
bivariant equivalence.  We describe the bivariant model 
structure on $(\sSet)_{/(A\times B)}$ and prove the 
existence of the Quillen equivalence from 
Theorem~\ref{thm:D} above.   

Finally, we point out that several of the results in this paper 
seem to be known to experts, but equally proofs of them 
are missing from the literature; in this paper we fill these 
gaps.

\noindent 
{\bf Notation}: for the most part we use the notation 
and terminology from Lurie's books \cite{HTT} and \cite{HA}, 
except where we have indicated.  Thus $\sSet$ denotes the 
category of simplicial sets, $\h(S)$ denotes the homotopy 
category of a simplicial set $S$, etc.  Following the convention 
in \cite{HA}, we will say that a {\em left cofinal} map 
of simplicial sets is what is called a {\em cofinal} map in \cite{HTT} and that 
a map of simplicial sets is {\em right cofinal} if and only if 
its opposite is left cofinal.

\section{The covariant model structure} 
\label{sec:cov model str}
Let $S$ be a simplicial set.  We recall some 
features of the covariant model structure 
on the category $(\sSet)_{/S}$ of simplicial 
sets over $S$ from \cite{J1} 
and \cite{HTT}.  

\begin{notation} 
\label{not:simp mapping spaces}
Recall that the category $(\sSet)_{/S}$ is 
canonically enriched over $\sSet$.  If $X\to S$ 
and $Y\to S$ are objects of $(\sSet)_{/S}$ 
then the simplicial mapping space $\map_{S}(X,Y)$ 
is the simplicial set defined by the pullback diagram 
\[
\begin{tikzcd} 
\map_{S}(X,Y) \arrow[r] \arrow[d] & Y^X \arrow[d] \\ 
\Delta^0 \arrow[r] & S^X 
\end{tikzcd} 
\]
where the lower horizontal map corresponds to the 
structure map $X\to S$.  
\end{notation} 

\subsection{Covariant equivalences} 
\label{subsec:cov equivs} 
Recall that a map $f\colon X\to Y$ in $(\sSet)_{/S}$ 
is said to be a {\em covariant equivalence} 
if the induced map 
\[
\map_{S}(Y,L)\to \map_{S}(X,L) 
\]
is a weak homotopy equivalence for every left 
fibration $L\to S$.  
The covariant equivalences are the 
weak equivalences for the {\em covariant} 
model structure on $(\sSet)_{/S}$ introduced 
by Joyal and Lurie.  

\begin{theorem}[Joyal/Lurie] 
There is the structure of a left proper, combinatorial 
model category on $(\sSet)_{/S}$ for which 
\begin{itemize} 
  \item the weak equivalences are the covariant equivalences; 
  \item the cofibrations are the monomorphisms; and 
  \item the fibrant objects are the left fibrations.  
\end{itemize} 
\end{theorem}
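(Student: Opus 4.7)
My plan is to apply Jeff Smith's recognition theorem for combinatorial model categories to $(\sSet)_{/S}$ with cofibrations the monomorphisms and a carefully chosen set of generating trivial cofibrations. First I would fix the generating cofibrations: the set $I$ consisting of all boundary inclusions $\partial\Delta^n \hookrightarrow \Delta^n$ equipped with a map to $S$. These generate the monomorphisms in $(\sSet)_{/S}$ by the small object argument. Next I would introduce a set $J$ of \emph{covariant anodyne} generating trivial cofibrations, built from the left horn inclusions $\Lambda^n_i \hookrightarrow \Delta^n$ for $0\le i<n$ (over $S$), together with their pushout-products against the boundary inclusions $\partial\Delta^m\hookrightarrow\Delta^m$ coming from the simplicial enrichment, so that the resulting saturated class is stable under that enrichment.

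Second I would verify Smith's hypotheses: (i) the class of covariant equivalences is accessible, since a map is a covariant equivalence iff it is inverted by $\map_S(-,L)$ for all left fibrations $L\to S$, a condition cut out by a small accessible limit; (ii) this class satisfies $2$-out-of-$3$ and is closed under retracts, both immediate from the definition; (iii) maps with the RLP against $I$ are trivial Kan fibrations over $S$ and are therefore covariant equivalences; and (iv) monomorphisms with the RLP against every map that has the RLP against $J$ (i.e.\ the $J$-cell complexes) are covariant equivalences.

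Third I would identify the fibrant objects. By construction a fibrant $X\to S$ is one with the RLP against every map in $J$. A combinatorial analysis — reducing lifts against the pushout-products in $J$ to iterated lifts against the left horn inclusions themselves — shows this class coincides with the left fibrations over $S$. Left properness then follows from the fact that cofibrations are monomorphisms and that for any left fibration $L\to S$ the functor $\map_S(-,L)$ sends pushouts along monomorphisms in $(\sSet)_{/S}$ to homotopy pullbacks of Kan complexes.

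The main obstacle is Smith's criterion (iv): showing that every covariant anodyne map is a covariant equivalence. Concretely, for each left fibration $L\to S$ and each covariant anodyne map $X\to Y$ in $(\sSet)_{/S}$, the restriction $\map_S(Y,L)\to\map_S(X,L)$ must be shown to be a trivial Kan fibration. This reduces to a statement about the interaction between left anodyne extensions and the simplicial enrichment — essentially that the pushout-product of a left anodyne inclusion with an arbitrary monomorphism is again left anodyne — which is Joyal's key technical lemma and the real content of the theorem. The remaining verifications are formal once this is in place.
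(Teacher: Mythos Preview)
The paper does not prove this theorem; it is stated as a known result attributed to Joyal and Lurie, with the understood references being \cite{J1} and \cite{HTT}. Your outline via Smith's recognition theorem is essentially Lurie's approach in \cite{HTT} (\S2.1.4 together with the machinery of \S A.2.6), and indeed the paper later proves the analogous result for bifibrations (Theorem~\ref{thm:biv model str}) by exactly this method, invoking Proposition~A.2.6.8 of \cite{HTT}. You have correctly identified the pushout-product lemma for left anodyne maps as the key technical input.

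Two small corrections. First, your condition (iv) is misstated: you wrote ``monomorphisms with the RLP against every map that has the RLP against $J$'', but what is needed is that the maps in $\mathrm{cof}(J)$---those with the \emph{left} lifting property against $J$-injectives---are covariant equivalences. Second, your identification of the fibrant objects needs one more step. Showing that the $J$-injective objects are the left fibrations only tells you that left fibrations lift against $\mathrm{cof}(J)$, not against arbitrary acyclic cofibrations, and not every acyclic cofibration in this model structure is left anodyne. The missing argument is that for any left fibration $L\to S$ and any monic covariant equivalence $A\to B$, the map $\map_S(B,L)\to\map_S(A,L)$ is simultaneously a Kan fibration (by the pushout-product lemma you cite) and a weak homotopy equivalence (by the definition of covariant equivalence), hence a trivial Kan fibration and in particular surjective on vertices. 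This is precisely the argument the paper deploys for the bifibration analogue in Proposition~\ref{prop:char of bifibns between bifibns}.
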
 

Dually there is the {\em contravariant} model structure 
on $(\sSet)_{/S}$, described in terms of right fibrations 
on $S$.  

The following theorem from \cite{S1} gives a 
very useful criterion for recognizing covariant 
equivalences.  

\begin{theorem}[\cite{S1}] 
\label{thm:7 from covariant paper}
Let $S$ be a simplicial set and let 
$f\colon X\to Y$ be a map in $(\sSet)_{/S}$.  
The following statements are 
equivalent: 
\begin{enumerate} 
\item $f$ is a covariant equivalence; 
\item the induced map $X\times_S R\to Y\times_S R$ 
is a weak homotopy equivalence for every right fibration 
$R\to S$; 
\item for every vertex $s$, and for every factorization 
$\Delta^0\to Rs\to S$ of the map $s\colon \Delta^0\to S$ 
into a right anodyne map followed by a right fibration, 
the induced map $X\times_S Rs\to Y\times_S Rs$ is a 
weak homotopy equivalence.  
\end{enumerate} 
\end{theorem}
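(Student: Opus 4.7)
The plan is to prove the cycle $(1) \Rightarrow (2) \Rightarrow (3) \Rightarrow (1)$. The implication $(2) \Rightarrow (3)$ is immediate by specializing the right fibration $R$ to each of the $Rs$ produced by the prescribed right anodyne/right fibration factorizations. For $(1) \Rightarrow (2)$, I will show that for any right fibration $R \to S$, the base-change functor $(-) \times_S R \colon (\sSet)_{/S} \to \sSet$ is a left Quillen functor from the covariant model structure on the source to the Kan--Quillen model structure on the target; since every object is cofibrant in both structures, such a functor preserves all weak equivalences. I factor it as the composition of base-change $(\sSet)_{/S} \to (\sSet)_{/R}$ followed by the forgetful functor $(\sSet)_{/R} \to \sSet$. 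The first is left Quillen from the covariant structure to itself because pullback along a right fibration preserves monomorphisms and left anodyne maps (the key fact being stability of left anodynes under pullback along right fibrations); the second is left Quillen from the covariant structure to Kan--Quillen because every left anodyne map is, in particular, a Kan--Quillen trivial cofibration.

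The substantive implication is $(3) \Rightarrow (1)$. I first choose covariant fibrant replacements $X \hookrightarrow \widehat{X}$ and $Y \hookrightarrow \widehat{Y}$ into left fibrations over $S$, and recall that $f$ is a covariant equivalence if and only if $\widehat{f} \colon \widehat{X} \to \widehat{Y}$ induces a weak equivalence on fibers $\widehat{X}_s \to \widehat{Y}_s$ at every vertex $s$ --- a standard property of the covariant model structure for maps between fibrant objects. The crux is to identify these fibers, up to weak equivalence, with $X \times_S Rs$ and $Y \times_S Rs$ respectively. This rests on two ingredients: first, pulling back the left fibration $\widehat{X} \to S$ along the right anodyne map $\{s\} \to Rs$ should yield a weak homotopy equivalence $\widehat{X}_s \to \widehat{X} \times_S Rs$ (and likewise for $Y$); second, since $Rs \to S$ is a right fibration and $X \to \widehat{X}$ is a covariant equivalence, the already-established implication $(1) \Rightarrow (2)$ supplies a weak equivalence $X \times_S Rs \to \widehat{X} \times_S Rs$. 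Chasing the resulting commutative square of weak equivalences shows that condition $(3)$ is exactly the statement that each $\widehat{X}_s \to \widehat{Y}_s$ is a weak equivalence, which by the preceding remark is $(1)$.

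The main obstacle is the first ingredient in the previous paragraph: showing that pullback of a left fibration along a right anodyne map is a weak homotopy equivalence. This is dual to the stability fact invoked in $(1) \Rightarrow (2)$; it can be reduced via small-object-argument style bookkeeping to the case of the generating right anodyne horns $\Lambda^n_k \hookrightarrow \Delta^n$ ($0 < k \leq n$), where it becomes a lifting/deformation-retraction argument adapted to the left fibration structure.
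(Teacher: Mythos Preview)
The paper does not contain a proof of this theorem; it is stated with attribution to \cite{S1} and then used as a black box. There is therefore no in-paper argument to compare your proposal against.

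Your outline is correct and follows a natural route. One point deserves a word of justification: in your argument for $(1)\Rightarrow(2)$ you assert that base change $u^*$ along a right fibration $u\colon R\to S$ is left Quillen for the covariant structures ``because pullback along a right fibration preserves monomorphisms and left anodyne maps.'' Since not every covariant trivial cofibration is left anodyne, this implication is not immediate. It does follow, however: preservation of left anodynes is exactly what is needed to show that the right adjoint $u_*$ carries left fibrations over $R$ to left fibrations over $S$ (unwind the adjunction on the generating left horns $\Lambda^n_k\subseteq\Delta^n$, $0\le k<n$), so $u^*$ is left Quillen by the standard criterion. The same remark applies to your treatment of the forgetful functor $(\sSet)_{/R}\to\sSet$. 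With that clarification, the cycle $(1)\Rightarrow(2)\Rightarrow(3)\Rightarrow(1)$ goes through, and the ``main obstacle'' you isolate in $(3)\Rightarrow(1)$ --- that the pullback of a right anodyne map along a left fibration is again right anodyne --- is precisely the dual of the stability fact already invoked, available for instance as Proposition~4.1.2.15 of \cite{HTT}.
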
 

\subsection{The right cancellation property}
\label{subsec:right cancell}
Recall that a class of 
monomorphisms $\mathcal{A}$ in a category 
$\mathcal{C}$ is said to satisfy the 
{\em right cancellation property} 
if the following condition is satisfied: 
if $u$ and $v$ are composable morphisms 
in $\mathcal{C}$ such that $u\in \mathcal{A}$ 
and $vu\in \mathcal{A}$, then 
$v\in \mathcal{A}$ also.  Left anodyne 
maps are an important example of a 
class of maps with this property.    

\begin{proposition}[Joyal] 
\label{prop:right cancellation for left anodynes}
The class of left anodyne maps 
satisfies the right 
cancellation property.  
\end{proposition}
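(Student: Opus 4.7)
The plan is to verify that $v$ has the left lifting property against every left fibration, which characterizes left anodyne maps. Fix a left fibration $p\colon X\to Y$ and a commutative square
\[
\begin{tikzcd}
B \arrow[r, "f"] \arrow[d, "v"'] & X \arrow[d, "p"] \\
C \arrow[r, "g"'] & Y
\end{tikzcd}
\]
The strategy is to pass to the simplicial mapping spaces of Notation~\ref{not:simp mapping spaces}. Setting $L=\map_{Y}(C,X)$, $M=\map_{Y}(B,X)$ and $K=\map_{Y}(A,X)$ (with structure maps $g$, $gv$ and $gvu$ to $Y$), the restrictions along $u$ and $v$ yield $L \xrightarrow{v^{*}} M \xrightarrow{u^{*}} K$ whose composite is $(vu)^{*}$.

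The main technical input is the standard stability of left fibrations under pushout-product with monomorphisms: if $i\colon B'\hookrightarrow C'$ is a monomorphism and $q\colon X\to Y$ is a left fibration, then the gap map $X^{C'}\to X^{B'}\times_{Y^{B'}}Y^{C'}$ is a left fibration, which is moreover a trivial Kan fibration when $i$ is left anodyne. Pulling these gap maps back along the vertices of $Y^{C}$, $Y^{B}$ and $Y^{A}$ picked out by $g$, $gv$ and $gvu$ gives three facts:
\begin{enumerate}
  \item[(i)] $v^{*}\colon L \to M$ is a left fibration, since $v$ is a monomorphism;
  \item[(ii)] $u^{*}\colon M \to K$ is a trivial Kan fibration, since $u$ is left anodyne;
  \item[(iii)] $u^{*}v^{*}\colon L \to K$ is a trivial Kan fibration, since $vu$ is left anodyne.
\end{enumerate}

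A solution to the original lifting problem is precisely a vertex $h'\in L$ with $v^{*}(h')=f$. The argument now runs in three steps. First, by (iii), surjectivity on vertices picks out some $h\in L$ with $u^{*}v^{*}(h)=u^{*}(f)$, so that $v^{*}(h)$ and $f$ lie in the same fiber of $u^{*}$. Second, by (ii) this fiber is a contractible Kan complex and so there is an edge $e\colon v^{*}(h)\to f$ in $M$. Third, by (i), $v^{*}$ has the right lifting property against $\{0\}\hookrightarrow\Delta^{1}$, so $e$ lifts to an edge $h\to h'$ in $L$, whose terminal vertex satisfies $v^{*}(h')=f$ as required.

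The main obstacle is essentially bookkeeping: one must check that the relevant pullbacks of the gap maps really do recover $v^{*}$, $u^{*}$ and $(vu)^{*}$, and that the connectedness of the fibers of $u^{*}$ allows the edge $e$ to be oriented as claimed. Granted the stability statement above, which is a standard consequence of the Joyal--Lurie theory of left fibrations, the rest of the argument is a formal diagram chase in the enriched lifting setup.
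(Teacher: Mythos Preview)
The paper states this proposition without proof, attributing it to Joyal (and elsewhere citing Corollary~4.1.2.2 of \cite{HTT}), so there is no in-paper argument to compare against.  Your proof is correct.  The key steps---that the gap map $X^{C}\to X^{B}\times_{Y^{B}}Y^{C}$ is a left fibration when $v$ is a monomorphism and a trivial Kan fibration when the inclusion is left anodyne, and that these pull back to the restriction maps $v^{*}$, $u^{*}$, $(vu)^{*}$ between the relevant fiber mapping spaces---are standard, and your three-step lifting argument (surject via $(vu)^{*}$, connect in the contractible fiber of $u^{*}$, then lift the edge along the left fibration $v^{*}$) is the usual enriched-lifting maneuver.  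One small point worth making explicit: you invoke ``$v$ is a monomorphism'' in step (i), and indeed the statement is false without this hypothesis (take $u\colon \Delta^{0}\hookrightarrow\Delta^{1}$ the initial vertex and $v\colon \Delta^{1}\to\Delta^{0}$).  The paper's phrasing of the right cancellation property is slightly informal on this point, but since $\mathcal{A}$ is declared to be a class of monomorphisms the intended reading is that $u$, $v$, $vu$ are all monomorphisms; your argument handles exactly this case.  Finally, your opening sentence implicitly uses that a map is left anodyne if and only if it has the left lifting property against every left fibration; this is true (small object argument plus retract), but you might state it rather than leave it as ``which characterizes''.
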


The following result from \cite{S1} 
gives a useful criterion for detecting when 
a given class of monomorphisms in $\sSet$ 
satisfying the right cancellation property contains the class of left 
anodyne maps.   

\begin{proposition}[\cite{S1}]
\label{prop:contains left anodynes}
Let $\mathcal{A}$ be a saturated class of monomorphisms in $\sSet$ 
which satisfies the right cancellation property.  Then the following statements 
are equivalent: 
\begin{enumerate} 
\item $\mathcal{A}$ contains the class of left anodyne morphisms; 
\item $\mathcal{A}$ contains the initial vertex maps $\Delta^{\set{0}}\to \Delta^n$ for all $n\geq 1$; 
\item $\mathcal{A}$ contains the horn inclusions $h^0_n\colon \Lambda^n_0\subseteq \Delta^n$ for all $n\geq 1$.  
\end{enumerate} 
\end{proposition}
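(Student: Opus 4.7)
The implications (1) $\Rightarrow$ (2) and (1) $\Rightarrow$ (3) are immediate, since both initial vertex inclusions and outer horn inclusions $\Lambda^n_0 \hookrightarrow \Delta^n$ are left anodyne. For the converses, I plan to prove, by strong induction on $n$, that $\Lambda^n_k \hookrightarrow \Delta^n$ belongs to $\mathcal{A}$ for every $0 \le k < n$. This will establish (2) $\Rightarrow$ (1) and (3) $\Rightarrow$ (1) simultaneously, with the equivalence (2) $\Leftrightarrow$ (3) falling out along the way.

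The central tool is the following Key Lemma: if $\mathcal{A}$ is saturated and contains $\Lambda^{m+1}_0 \hookrightarrow \Delta^{m+1}$ for all $0 \le m \le d$, then for every simplicial set $L$ of dimension at most $d$, the cone inclusion $\Delta^{\{0\}} \hookrightarrow \{0\} \star L$ belongs to $\mathcal{A}$. This follows from a skeletal filtration of $L$: attaching a non-degenerate $k$-simplex of $L$ along $\partial\Delta^k \hookrightarrow \Delta^k$ becomes, after applying the join $\{0\} \star (-)$, precisely the attachment of $\Lambda^{k+1}_0 \hookrightarrow \Delta^{k+1}$, and saturation closes the argument.

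In the inductive step at level $n$, the case $k = 0$ is handled via the telescope $\Delta^{\{0\}} \hookrightarrow \Lambda^n_0 \hookrightarrow \Delta^n$. The Key Lemma, applied to $L = \partial\Delta^{\{1,\ldots,n\}}$ (of dimension $n-2$) with the inductive hypothesis supplying its input, places $\Delta^{\{0\}} \hookrightarrow \Lambda^n_0$ in $\mathcal{A}$; then under hypothesis (3) the second factor lies in $\mathcal{A}$ by assumption and the composite falls out, while under hypothesis (2) the composite lies in $\mathcal{A}$ and right cancellation extracts the second factor. This is exactly what establishes (2) $\Leftrightarrow$ (3) in the course of the induction.

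The essential case is $0 < k < n$. Here I would use the pushout decomposition
\[
\Lambda^n_k \;=\; \bigl(\{0\} \star \Lambda^{n-1}_{k-1}\bigr) \cup_{\Lambda^{n-1}_{k-1}} \Delta^{\{1,\ldots,n\}},
\]
obtained by separating the face $d_0\Delta^n = \Delta^{\{1,\ldots,n\}}$ from the remaining faces $d_i\Delta^n$ (for $i \ne 0, k$), whose union is precisely $\{0\} \star \Lambda^{n-1}_{k-1}$ with $\Lambda^{n-1}_{k-1}$ viewed as a horn in $\Delta^{\{1,\ldots,n\}}$. The inductive hypothesis places $\Lambda^{n-1}_{k-1} \hookrightarrow \Delta^{n-1}$ in $\mathcal{A}$, so saturation places the pushout map $\{0\} \star \Lambda^{n-1}_{k-1} \hookrightarrow \Lambda^n_k$ in $\mathcal{A}$. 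Composing this with $\Delta^{\{0\}} \hookrightarrow \{0\} \star \Lambda^{n-1}_{k-1}$ (from the Key Lemma, applied with $L = \Lambda^{n-1}_{k-1}$) and then applying right cancellation against the initial vertex inclusion $\Delta^{\{0\}} \hookrightarrow \Delta^n$ (known from (2)) produces $\Lambda^n_k \hookrightarrow \Delta^n$ in $\mathcal{A}$. Saturation then upgrades this to the full class of left anodyne morphisms. The main obstacle is verifying the displayed pushout decomposition cleanly and threading the nested induction so that the Key Lemma's hypotheses are available at each stage.
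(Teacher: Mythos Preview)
The paper does not give a proof of this proposition; it is quoted from \cite{S1} without argument. So there is nothing in the present paper to compare your approach against.

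That said, your argument is correct. The identification $\{0\}\star\partial\Delta^k = \Lambda^{k+1}_0$ inside $\Delta^{k+1}$ makes the Key Lemma go through by a skeletal induction on $L$, and the pushout decomposition
\[
\Lambda^n_k \;=\; \bigl(\{0\}\star\Lambda^{n-1}_{k-1}\bigr)\cup_{\Lambda^{n-1}_{k-1}}\Delta^{\{1,\ldots,n\}}
\]
for $0<k<n$ is valid: separating off the face $d_0\Delta^n = \Delta^{\{1,\ldots,n\}}$, the remaining faces $d_i\Delta^n$ with $i\neq 0,k$ all contain the vertex $0$ and assemble to $\{0\}\star\Lambda^{n-1}_{k-1}$, meeting $d_0\Delta^n$ in exactly $\Lambda^{n-1}_{k-1}$. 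The induction threads without difficulty, since at stage $n$ the Key Lemma is only ever applied to simplicial sets of dimension $\leq n-2$, so the required horns $\Lambda^j_0\hookrightarrow\Delta^j$ with $j\leq n-1$ are supplied either directly by hypothesis (3) or by the inductive hypothesis under (2). One small point worth making explicit in the write-up: under hypothesis (3), first derive (2) once and for all (via the composite $\Delta^{\{0\}}\hookrightarrow\Lambda^n_0\hookrightarrow\Delta^n$, both factors in $\mathcal{A}$), so that the right-cancellation step against $\Delta^{\{0\}}\hookrightarrow\Delta^n$ is available uniformly in the $0<k<n$ case.
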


\begin{remark} 
A similar criterion, framed in terms of the 
{\em spine inclusions} $\Delta^{\set{0,1}}\cup 
\cdots \cup \Delta^{\set{n-1,n}}\subseteq \Delta^n$, 
appears in \cite{BG}.  
\end{remark} 

From \cite{S2} we have another very useful example 
of a class of monomorphisms satisfying the right 
cancellation property.  

\begin{proposition}[\cite{S2}] 
\label{prop:inner anodyne right cancel}
The class of inner anodyne maps in $\sSet$ 
satisfies the right cancellation property.  
\end{proposition}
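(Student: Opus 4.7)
The plan is to adapt the retract argument behind Proposition~\ref{prop:right cancellation for left anodynes} to the inner setting. Let $u\colon X\to Y$ be inner anodyne and write $w = vu$, which is inner anodyne by hypothesis. First, factor $v$ via the (inner anodyne, inner fibration) weak factorization system as $v = qj$, where $j\colon Y \to Y'$ is inner anodyne and $q\colon Y' \to Z$ is an inner fibration. The goal then reduces to producing a section $r\colon Z \to Y'$ of $q$ satisfying $rv = j$: given such an $r$, the diagram
\[
\begin{tikzcd}
Y \arrow[r,"1_Y"] \arrow[d,"v"'] & Y \arrow[r,"1_Y"] \arrow[d,"j"] & Y \arrow[d,"v"] \\
Z \arrow[r,"r"'] & Y' \arrow[r,"q"'] & Z
\end{tikzcd}
\]
exhibits $v$ as a retract of the inner anodyne $j$ in the arrow category, and closure of the inner anodyne class under retracts implies that $v$ is itself inner anodyne.

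A preliminary section $r_0\colon Z\to Y'$ with $qr_0 = 1_Z$ and $r_0 v u = ju$ is obtained immediately from the lifting property of the inner anodyne $w$ against the inner fibration $q$, applied to the obvious commutative square. To upgrade the partial identity $r_0 v u = ju$ to full agreement $rv = j$ on all of $Y$, I plan to exploit pushout-product stability of the inner anodynes against monomorphisms: applied to $u$, this shows that the restriction map $u^{*}\colon \map_Z(Y,Y') \to \map_Z(X,Y')$ is a trivial Kan fibration. The vertices $r_0 v$ and $j$ of $\map_Z(Y,Y')$ have common image $ju$ in $\map_Z(X,Y')$, hence are joined by a $1$-simplex: a homotopy $H\colon Y\times\Delta^1 \to Y'$ over $Z$ from $r_0 v$ to $j$, constant along $X\times\Delta^1$.

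The main obstacle is converting the homotopy $H$, together with the section $r_0$, into a corrected section $r\colon Z \to Y'$ satisfying $rv = j$ on the nose. A naive attempt to extend $H$ to a map $Z\times\Delta^1 \to Y'$ by a single lifting against $q$ fails, since the relevant pushout-product $v \mathbin{\square} (\{0\} \subseteq \Delta^1)$ is only left anodyne (not inner anodyne), and so does not admit lifts against arbitrary inner fibrations. Instead, the final step must proceed via a more intricate construction — for instance a cellular induction over the simplices of $Z$ not in the image of $v$, using the inner horn-filling property of $q$ together with the homotopy data from $H$ to build $r$ one simplex at a time. This bookkeeping is the technical heart of the argument, treated in \cite{S2}; once it is carried out, the retract diagram above immediately delivers the conclusion.
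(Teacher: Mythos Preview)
The paper does not prove this proposition; it is stated with a citation to \cite{S2} and invoked as a black box. There is therefore no proof in the paper to compare your proposal against.

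On the merits of the sketch itself: everything through the construction of the relative homotopy $H$ is correct, and you accurately isolate the obstruction --- the pushout product of the monomorphism $v$ with $\{0\}\subseteq\Delta^1$ is only left anodyne, not inner anodyne, so one cannot extend $H$ across $Z\times\Delta^1$ by a single lift against the inner fibration $q$. But what follows is not a proof: you defer the decisive step back to \cite{S2}, and the proposed ``cellular induction over the simplices of $Z$ not in the image of $v$, using inner horn-filling in $q$'' is not obviously viable. For a new $n$-simplex of $Z$ the extension problem for $r$ presents a $\partial\Delta^n$-shaped boundary, not an inner horn, and the homotopy data from $H$ lives over $Y$ rather than over that boundary; there is no evident mechanism for converting this into an inner-horn lifting problem against $q$. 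The substantive content --- precisely what distinguishes the inner case from the left/right anodyne cases, where the analogous pushout product \emph{is} of the required type and the retract argument closes immediately --- is exactly what has been omitted. So your proposal is a reasonable opening move, but it is missing its main step.
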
 

We will make use of this fact in the proof of 
Proposition~\ref{prop:delta! preserves left anodyne}.  

\subsection{Dominant maps} 
\label{subsec:dominant maps}
In this section we recall some facts about the 
notion of {\em dominant} maps of simplicial 
sets introduced by Joyal (we shall need some of the results from 
this section in the proof of Lemma~\ref{lem:diag biv anodyne}  
in Section~\ref{sec:bivariant eqs}). 

\begin{definition}[Joyal] 
\label{def:dominant map}
A map $u\colon A\to B$ in $\sSet$ is said to be 
{\em dominant} if the right derived functor 
\[
\mathbf{R}u^*\colon \Ho((\sSet)_{/B})\to \Ho((\sSet)_{/A}) 
\]
is fully faithful for the contravariant model 
structures on $(\sSet)_{/A}$ and $(\sSet)_{/B}$.  
\end{definition} 

\begin{remark} 
The notion of dominant map is also studied by 
Gaitsgory and Rozenblyum, who use the term 
{\em contractible} map instead of dominant map 
(see Section 2.3 of \cite{GR}).  
\end{remark} 

\begin{remark} 
\label{rem:dom maps closed under retracts}
It follows immediately from Definition~\ref{def:dominant map} 
that dominant maps are closed under retracts and invariant 
under categorical equivalences.  
\end{remark} 

The following result is due to Joyal; we give a proof 
since we have not been able to find one in the literature to date. 

\begin{lemma}[Joyal] 
\label{lem:dom maps closed under cobase change along lf}
If $u\colon A\to B$ is dominant and $R\to B$ is a right 
fibration then the induced map $R\times_{B}A\to R$ 
is dominant.  
\end{lemma}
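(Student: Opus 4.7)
The plan is to unwind what it means for $u'$ to be dominant, reduce the statement to an instance of the dominance of $u$, and then transfer a weak equivalence from the contravariant model structure on $(\sSet)_{/B}$ back to that on $(\sSet)_{/R}$. Every object is cofibrant in either contravariant model structure, and pulling back a right fibration along any map yields a right fibration, so for each right fibration $Y \to R$ the object $(u')^* Y \cong Y \times_B A$ is already fibrant over $R \times_B A$; the derived counit $\mathbf{L}(u')_! \mathbf{R}(u')^* Y \to Y$ is therefore represented by the underived projection $Y \times_B A \to Y$. Hence $u'$ is dominant if and only if this projection is a contravariant equivalence in $(\sSet)_{/R}$ for every such $Y$.

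For the reduction, observe that since $q\colon R \to B$ is a right fibration, the composite $Y \to R \to B$ is a right fibration, so $Y$ is fibrant in $(\sSet)_{/B}$. Viewed as a morphism of simplicial sets over $B$, the projection $Y \times_B A \to Y$ is exactly the counit $u_! u^* Y \to Y$, so by the dominance of $u$ it is a contravariant equivalence in $(\sSet)_{/B}$. What remains, and is the main obstacle, is to show that a morphism $f\colon X \to Y$ in $(\sSet)_{/R}$ whose image in $(\sSet)_{/B}$ under postcomposition with $q$ is a contravariant equivalence is itself a contravariant equivalence in $(\sSet)_{/R}$.

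To prove this reflection property, I would test against an arbitrary right fibration $F \to R$ and show that $\map_R(Y, F) \to \map_R(X, F)$ is a weak homotopy equivalence. The composite $F \to R \to B$ is a right fibration; since $F \to R$ is then a morphism over $B$ between two right fibrations over $B$, it is itself a right fibration (a standard fact, cf.\ \cite{HTT}), and consequently a fibration between fibrant objects in the contravariant model structure on $(\sSet)_{/B}$. For any $Z \to R$, the mapping space $\map_R(Z, F)$ is naturally identified with the fiber of $\map_B(Z, F) \to \map_B(Z, R)$ over the structure map $Z \to R$. Because the contravariant model structure on $(\sSet)_{/B}$ is a simplicial model category, SM7 makes both $\map_B(Y, F) \to \map_B(Y, R)$ and $\map_B(X, F) \to \map_B(X, R)$ Kan fibrations between Kan complexes. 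The hypothesis on $f$ then makes both $\map_B(Y, F) \to \map_B(X, F)$ and $\map_B(Y, R) \to \map_B(X, R)$ weak homotopy equivalences, and the induced map on fibers $\map_R(Y, F) \to \map_R(X, F)$ is a weak homotopy equivalence by the standard fiber-comparison lemma for a map of Kan fibrations. The nontrivial inputs beyond the dominance of $u$ are the closure of right fibrations under morphisms over a common base and the simpliciality of the contravariant model structure, both of which are available in \cite{HTT}.
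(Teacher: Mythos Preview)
Your argument is correct and follows the same overall strategy as the paper: both proofs reduce the dominance of $v\colon R\times_B A\to R$ to the dominance of $u$ together with the fact that post-composition $p_!\colon (\sSet)_{/R}\to (\sSet)_{/B}$ along the right fibration $p\colon R\to B$ reflects contravariant equivalences. The paper phrases this reduction at the level of derived functors, using the Beck--Chevalley isomorphism $\mathbf{L}q_!\mathbf{R}v^*\simeq \mathbf{R}u^*\mathbf{L}p_!$ to identify $\mathbf{L}p_!\epsilon_v$ with $\epsilon_u\mathbf{L}p_!$, and then invokes the conservativity of $\mathbf{L}p_!$ as a black box (Corollary 10.15 of \cite{J1}). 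You instead observe concretely that the counit map $Y\times_B A\to Y$ is literally the same map of simplicial sets whether viewed over $R$ or over $B$, and you supply a direct proof of the conservativity via the simplicial enrichment: identifying $\map_R(-,F)$ as a fiber of $\map_B(-,F)\to\map_B(-,R)$ and applying the fiber-comparison lemma for Kan fibrations. Your version is more self-contained and avoids the external citation, at the cost of unpacking the mapping-space argument; the paper's version is terser but relies on a result stated only in Joyal's unpublished notes.
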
 

\begin{proof}
Suppose given a dominant map $u\colon A\to B$ and suppose 
that $p\colon R\to B$ is a right fibration.  Form the 
pullback diagram 
\[
\begin{tikzcd} 
A\times_B R \arrow[r,"v"] \arrow[d,"q"] & R \arrow[d,"p"] \\ 
A \arrow[r,"u"] & B 
\end{tikzcd} 
\]
We need to prove that the right derived functor 
\[
\mathbf{R}v^*\colon \Ho((\sSet)_{/R})) \to 
 \Ho((\sSet)_{/(A\times_{B}R)}
\]
is fully faithful, where $(\sSet)_{/(A\times_{B}R)}$ and 
$(\sSet)_{/R}$ are equipped with the contravariant 
model structures.  We will prove 
that the counit  
\[
\epsilon_v\colon \mathbf{L}v_! \mathbf{R}v^*\to \mathrm{id} 
\]
is an isomorphism.  Since $p\colon R\to B$ is a right 
fibration, the left derived functor 
\[
\mathbf{L}p_!\colon \Ho((\sSet)_{/R}) \to 
\Ho((\sSet)_{/B}) 
\]
is conservative (Corollary 10.15 of \cite{J1}), and hence 
it suffices to prove that the image $\mathbf{L}p_!\epsilon$ 
is an isomorphism in $\Ho((\sSet)_{/B})$.  We have a 
natural isomorphism $\mathbf{L}p_!\mathbf{L}v_!\simeq 
\mathbf{L}u_!\mathbf{L}q_!$.  A straightforward argument, 
using the fact that $p$ is a right fibration, shows that 
the canonical natural transformation 
\[
\mathbf{L}q_!\mathbf{R} v^*\to \mathbf{R}u^*\mathbf{L}p_! 
\]
is a natural isomorphism.  Therefore $\mathbf{L}p_!\epsilon$ 
is isomorphic to the natural transformation 
\[
\epsilon_u\mathbf{L}p_!\colon \mathbf{L}u_!\mathbf{R}u^*\mathbf{L}p_! \to \mathbf{L}p_! 
\]
which is itself a natural isomorphism since $u$ is dominant.  
\end{proof} 

We state the following result which appears 
in \cite{J1} and \cite{GR}.  We first need some 
notation.  

\begin{notation} 
If $f\colon b\to b'$ is 
an edge of a simplicial set $B$ then we will write 
$B_{b//b'}$ for the double slice $(B_{b/})_{/f}$.  
If $B$ is the nerve of a category, 
then the simplicial set $B_{b//b'}$ is the nerve of the 
category of factorizations of the arrow $f\colon b\to b'$.  
\end{notation} 

\begin{lemma}
\label{lem:GR lemma}
Suppose that $A$ and $B$ are $\infty$-categories.  A map 
$u\colon A\to B$ is dominant if and only if the $\infty$-category $A\times_{B}B_{b//b'}$ 
is weakly contractible for every edge $f\colon b\to b'$ in $B$.  
\end{lemma}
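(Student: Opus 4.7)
The plan is to translate dominance into a fiberwise condition on representable right fibrations and then read off that condition in terms of the double slices.

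First, unwinding the definition of dominance, $u$ is dominant if and only if for every right fibration $R\to B$ the counit map $A\times_B R\to R$ is a contravariant equivalence over $B$.  Since the representable right fibrations $B_{/b'}\to B$ generate the contravariant homotopy category of $(\sSet)_{/B}$ under (homotopy) colimits, and the class of $R$ for which the counit condition holds is closed under such colimits, it suffices to check this when $R = B_{/b'}$ for each vertex $b'\in B$.  Applying the dual of Theorem~\ref{thm:7 from covariant paper}, which detects contravariant equivalences over $B$ by base change along the left fibrations $B_{b/}\to B$ for $b\in B$, this reduces to showing that
\[
A\times_B B_{/b'}\times_B B_{b/}\longrightarrow B_{/b'}\times_B B_{b/}
\]
is a weak homotopy equivalence for every pair of vertices $b,b'\in B$.

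The key geometric input is the identification of $B_{/b'}\times_B B_{b/}$ as the "space of spans" $b\to c\to b'$ in $B$, together with the observation that the canonical composition map $B_{/b'}\times_B B_{b/}\to \Map_B(b,b')$ is a Kan fibration whose fiber over an arrow $f\colon b\to b'$ can be identified with the double slice $B_{b//b'}$.  Moreover, for $B$ an $\infty$-category each $B_{b//b'}$ is itself weakly contractible, having an initial object given by the degenerate factorization $b\xrightarrow{\mathrm{id}} b\xrightarrow{f}b'$, so this Kan fibration is in fact a trivial Kan fibration.  The same analysis applies after base-changing along $u$: the map $A\times_B B_{/b'}\times_B B_{b/}\to \Map_B(b,b')$ is a Kan fibration with fibers $A\times_B B_{b//b'}$.

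Both directions of the lemma now follow by two-out-of-three applied to the resulting triangle over $\Map_B(b,b')$.  For $(\Leftarrow)$, the hypothesis that each $A\times_B B_{b//b'}$ is weakly contractible makes the pulled-back Kan fibration trivial, so the map of total spaces is a weak homotopy equivalence, which establishes dominance.  For $(\Rightarrow)$, dominance gives the map of total spaces a weak homotopy equivalence, so the pulled-back Kan fibration is a weak equivalence, forcing each of its fibers $A\times_B B_{b//b'}$ to be contractible.  The main technical obstacle is verifying that the composition map is genuinely a Kan fibration with the claimed fiber; this requires some care in choosing a compatible simplicial model for $\Map_B(b,b')$ (for example the right-pinched model $\{b'\}\times_B B_{b/}$) and is ultimately a consequence of the inner horn filling property of $B$ together with the weak contractibility of $B_{b//b'}$.
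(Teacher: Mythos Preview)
The paper leaves the proof of this lemma to the reader, so there is nothing to compare against directly; I will assess your argument on its own terms.  Your reduction to representable right fibrations $R=B_{/b'}$ is sound (both $\mathbf{L}u_!$ and $\mathbf{R}u^*$ preserve homotopy colimits, the latter because $u^*$ is also left Quillen for the contravariant model structure, so the counit can be tested on generators), and the application of the dual of Theorem~\ref{thm:7 from covariant paper} over $B$ correctly reduces dominance to the maps $A\times_B B_{/b'}\times_B B_{b/}\to B_{/b'}\times_B B_{b/}$ being weak homotopy equivalences.

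The gap is in the final step.  There is no strict simplicial ``composition map'' $B_{/b'}\times_B B_{b/}\to \Map_B(b,b')$: an $n$-simplex of the source is a pair consisting of a map $\Delta^n\star\Delta^0\to B$ ending at $b'$ and a map $\Delta^0\star\Delta^n\to B$ starting at $b$, agreeing on $\Delta^n$, and none of the standard models $\Hom^L_B(b,b')$, $\Hom^R_B(b,b')$, or $\{b\}\times_B B^{\Delta^1}\times_B\{b'\}$ receives a natural map from such data --- precisely because composition in a quasi-category is not a strict operation.  You flag this as a ``technical obstacle'' but do not resolve it, and it is not a minor point: without an actual map there is no triangle to which two-out-of-three can be applied.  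The clean fix is to apply Theorem~\ref{thm:7 from covariant paper} one level up.  Since $B_{/b'}\to B$ is a right fibration, postcomposition with it preserves and reflects contravariant equivalences, so $A\times_B B_{/b'}\to B_{/b'}$ is a contravariant equivalence over $B$ if and only if it is one over $B_{/b'}$.  Testing over $B_{/b'}$ via its own undercategories, the relevant left fibrations are $(B_{/b'})_{f/}$ for objects $f\in B_{/b'}$, i.e.\ edges $f\colon b\to b'$ of $B$; a direct inspection of simplices shows $(B_{/b'})_{f/}\cong (B_{b/})_{/f}=B_{b//b'}$, and since the latter has an initial object and is therefore weakly contractible, the condition becomes exactly that $A\times_B B_{b//b'}$ is weakly contractible for every edge $f$.
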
 

The proof of this statement is reasonably straightforward 
and is left to the reader.  
We note the following consequences.    

\begin{remark} 
It follows easily that a map $u\colon A\to B$ 
of simplicial sets is dominant if and only if the 
opposite map $u^{\op}\colon A^{\op}\to B^{\op}$ is 
dominant (recall that dominant maps are invariant 
under categorical equivalences).  
\end{remark} 

\begin{remark} 
\label{rem:dom and cofinal}
It follows, using Theorem 4.1.3.1 from \cite{HTT}, that 
every dominant map is left cofinal and right cofinal.  
\end{remark} 

\begin{lemma} 
\label{lem:prods of dom maps}
If $u\colon A\to B$ and $v\colon C\to D$ are dominant 
maps of simplicial sets, then the product $u\times v\colon A\times C 
\to B\times D$ is also dominant.  
\end{lemma}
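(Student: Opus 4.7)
The plan is to reduce the statement to the $\infty$-categorical criterion of Lemma~\ref{lem:GR lemma}, and then verify that criterion by a product decomposition of double slices.

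First, I would arrange for all four simplicial sets to be $\infty$-categories. Choose a Joyal fibrant replacement $j_B\colon B \to B'$, and factor the composite $j_B \circ u$ as a trivial Joyal cofibration $j_A\colon A \to A'$ followed by a Joyal fibration $u'\colon A' \to B'$, so that $A'$ is automatically an $\infty$-category; apply the analogous construction to $v$ to produce categorical equivalences $j_C\colon C \to C'$, $j_D\colon D \to D'$ and a map $v'\colon C' \to D'$ between $\infty$-categories. Since the Joyal model structure is cartesian and every simplicial set is cofibrant, the product maps $j_A \times j_C$ and $j_B \times j_D$ are categorical equivalences fitting into a commutative square with $u \times v$ and $u' \times v'$. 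By Remark~\ref{rem:dom maps closed under retracts}, it then suffices to prove that $u' \times v'$ is dominant.

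Next, I would verify the criterion of Lemma~\ref{lem:GR lemma} for $u' \times v'$. Given an edge $(f,g)\colon (b,d) \to (b',d')$ in $B' \times D'$, unwinding the definition of slice via $n$-simplices as maps out of $\Delta^0 \star \Delta^n$ gives a canonical isomorphism
\[
(B' \times D')_{(b,d)//(b',d')} \cong B'_{b//b'} \times D'_{d//d'}.
\]
Consequently, the pullback decomposes as a product of pullbacks,
\[
(A' \times C') \times_{B' \times D'} \bigl(B'_{b//b'} \times D'_{d//d'}\bigr) \cong \bigl(A' \times_{B'} B'_{b//b'}\bigr) \times \bigl(C' \times_{D'} D'_{d//d'}\bigr).
\]

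Finally, by Lemma~\ref{lem:GR lemma} applied separately to the dominant maps $u'$ and $v'$, each factor on the right-hand side is weakly contractible, and the product of two weakly contractible simplicial sets is weakly contractible because geometric realization preserves finite products up to weak equivalence. A second appeal to Lemma~\ref{lem:GR lemma} then concludes that $u' \times v'$ is dominant. The main subtlety is the initial reduction step: one needs the Joyal model structure to be cartesian in order to guarantee that $j_A \times j_C$ and $j_B \times j_D$ are categorical equivalences, so that Remark~\ref{rem:dom maps closed under retracts} transports dominance between $u' \times v'$ and $u \times v$; the product decomposition of the double slice and the resulting pullback is then essentially formal.
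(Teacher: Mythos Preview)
Your proposal is correct and follows essentially the same approach as the paper: reduce to $\infty$-categories via invariance of dominance under categorical equivalence, then apply the double-slice criterion of Lemma~\ref{lem:GR lemma} together with a product decomposition of slices. The only cosmetic difference is that the paper first reduces to the case $u\times\id_C$ (treating one factor at a time) and states the slice decomposition at the level of single under- and overcategories rather than writing out the full double slice, whereas you handle both factors simultaneously; neither choice affects the substance of the argument.
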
 

\begin{proof}
It suffices to prove that if $u\colon A\to B$ is 
dominant then $u\times \mathrm{id}\colon A\times C 
\to B\times C$ is dominant for any simplicial set 
$C$.  Since dominant maps are invariant under 
categorical equivalence, we may suppose without 
loss of generality that $A$, $B$ and $C$ 
are $\infty$-categories.  This follows 
immediately from Lemma~\ref{lem:GR lemma}, 
using the fact that for any vertices 
$b\in B$ and $c\in C$ we have a pullback diagram 
\[
\begin{tikzcd} 
A\times_{B} B_{b/}\times C_{c/} 
\arrow[r] \arrow[d] & 
B_{b/}\times C_{c/} \arrow[d]      \\ 
A\times C \arrow[r] & B\times C 
\end{tikzcd} 
\]
involving the undercategories $B_{b/}$ 
and $C_{c/}$, and similarly for overcategories.      
\end{proof}

We conclude this section with the following  
useful example of a dominant map.  

\begin{lemma} 
\label{lem:diagonal dominant}
For every $n\geq 0$ the diagonal map $\Delta^n\to 
\Delta^n\times \Delta^n$ is dominant.  
\end{lemma}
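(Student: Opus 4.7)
My plan is to invoke Lemma~\ref{lem:GR lemma}, which applies here since both $\Delta^n$ and $\Delta^n \times \Delta^n$ are $\infty$-categories (being nerves of finite posets). It thus suffices to verify that for every edge $f \colon b \to b'$ of $\Delta^n \times \Delta^n$, the fiber product
\[
\Delta^n \times_{\Delta^n \times \Delta^n} (\Delta^n \times \Delta^n)_{b//b'}
\]
is weakly contractible.

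The next step is an explicit computation of this pullback. Writing $b = (i_0, j_0)$ and $b' = (i_1, j_1)$ with $i_0 \leq i_1$ and $j_0 \leq j_1$, the double slice construction is compatible with finite products, so $(\Delta^n \times \Delta^n)_{b//b'}$ identifies with $(\Delta^n)_{i_0//i_1} \times (\Delta^n)_{j_0//j_1}$. Each factor is the nerve of the factorization poset of the corresponding edge in $\Delta^n$, namely the nerve of the interval $[i_0,i_1]$ (respectively $[j_0,j_1]$). Hence the whole double slice identifies with the nerve of the box $[i_0,i_1] \times [j_0,j_1]$, equipped with the canonical inclusion into $\Delta^n \times \Delta^n$. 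Pulling back along the diagonal then computes as the nerve of the subposet of diagonal vertices of this box, a finite totally ordered subposet of $[n]$.

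The final step is to observe that such a subposet has as its nerve a simplex, and is therefore weakly contractible whenever it is non-empty. The main subtlety I anticipate is the combinatorial check of non-emptiness of this diagonal intersection for every edge $f$; this would be handled by a short case analysis using the constraints imposed by $i_0 \leq i_1$ and $j_0 \leq j_1$. If needed, I would reduce the workload by appealing to the stability of dominance under products (Lemma~\ref{lem:prods of dom maps}) and under categorical equivalence (Remark~\ref{rem:dom maps closed under retracts}) in order to reduce the general claim to the base case $n=1$, which can be checked directly.
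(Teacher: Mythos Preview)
Your direct approach via Lemma~\ref{lem:GR lemma} runs into a genuine obstruction, not merely a bookkeeping subtlety: the intersection $[i_0,i_1]\cap[j_0,j_1]$ is \emph{not} always non-empty. For example, for the non-degenerate edge $(0,2)\to(1,2)$ in $\Delta^2\times\Delta^2$ the factorization box is $[0,1]\times\{2\}$, which does not meet the diagonal at all, so the fiber product is empty and hence not weakly contractible. More generally, whenever $j_0>i_1$ or $i_0>j_1$ the pullback is empty, and such edges exist as soon as $n\geq 2$. Thus the ``short case analysis'' you anticipate cannot close the argument; the criterion of Lemma~\ref{lem:GR lemma} simply does not verify directly for the diagonal of $\Delta^n$ once $n\geq 2$.

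This is precisely why the paper does not attack general $n$ head-on. Instead it observes that the diagonal $\Delta^n\to\Delta^n\times\Delta^n$ is a \emph{retract} of the diagonal $(\Delta^1)^n\to(\Delta^1)^n\times(\Delta^1)^n$, and that the latter is an $n$-fold product of copies of the diagonal $\Delta^1\to\Delta^1\times\Delta^1$. Closure of dominant maps under retracts (Remark~\ref{rem:dom maps closed under retracts}) and under products (Lemma~\ref{lem:prods of dom maps}) then reduces everything to the single case $n=1$, where the case analysis via Lemma~\ref{lem:GR lemma} is tractable. Your fallback is in the right spirit, but note that the tool you need from Remark~\ref{rem:dom maps closed under retracts} is closure under \emph{retracts}, not invariance under categorical equivalence: $\Delta^n$ is not categorically equivalent to $(\Delta^1)^n$ for $n\geq 2$, so categorical invariance alone does not give the reduction.
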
 

\begin{proof} 
The diagonal map $\Delta^n\to \Delta^n\times 
\Delta^n$ is a retract of the diagonal map 
$(\Delta^1)^n\to (\Delta^1)^n\times (\Delta^1)^n$.  
Therefore, since dominant maps are closed under retracts 
(Remark~\ref{rem:dom maps closed under retracts}) and 
products (Lemma~\ref{lem:prods of dom maps}) we are reduced to proving 
that $\Delta^1\to \Delta^1\times \Delta^1$ is dominant.  
This can be proven using Lemma~\ref{lem:GR lemma} and 
a case by case analysis.    
\end{proof} 

\subsection{Inner anodyne maps and inner fibrations} 
\label{subsec:inner}
We close this section by recording a couple of straightforward 
results about inner fibrations and inner anodyne maps that we will 
need later in the paper.  
\begin{lemma} 
\label{lem:inner implies kan}
Suppose that $p\colon S\to T$ is an inner fibration, where $S$ and $T$ are Kan complexes.  
If $p$ has the right lifting property against the map $\Delta^{\set{0}}\to \Delta^1$ then $p$ is a Kan fibration.  
\end{lemma}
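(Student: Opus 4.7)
The plan is to first upgrade $p$ from an inner fibration to a left fibration, and then apply the well-known fact that a left fibration over a Kan complex is a Kan fibration.

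First I would observe that since $S$ is a Kan complex, every edge of $S$ is an equivalence in the $\infty$-category $S$. By Proposition 2.4.1.5 of \cite{HTT} (and its coCartesian dual), every equivalence in the total space of an inner fibration is $p$-coCartesian; hence every edge of $S$ is a $p$-coCartesian edge. Combining this with the hypothesis that $p$ has the right lifting property against $\Delta^{\{0\}}\to \Delta^1$, we see that $p$ is a coCartesian fibration: given any edge $\bar f\colon t\to t'$ in $T$ and any vertex $s\in S$ with $p(s)=t$, the RLP produces a lift $\tilde f\colon s\to s'$ of $\bar f$, and $\tilde f$ is automatically $p$-coCartesian by the previous sentence. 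Since in addition every edge of $S$ is $p$-coCartesian, Proposition 2.4.2.4 of \cite{HTT} implies that $p$ is a left fibration.

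Finally, I would invoke Lemma 2.1.3.3 of \cite{HTT}, which states that any left fibration whose base is a Kan complex is automatically a Kan fibration. Since $T$ is a Kan complex by hypothesis, $p$ is a Kan fibration as claimed.

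There is essentially no obstacle here; the argument is a short assembly of standard facts from \cite{HTT}. The only place one needs to be careful is to note that $S$ and $T$ qualify as $\infty$-categories (which they do, being Kan complexes) so that the results on coCartesian edges and left fibrations cited above apply directly.
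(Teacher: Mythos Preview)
Your proof is correct and follows essentially the same approach as the paper: both arguments observe that every edge of $S$ is an equivalence and hence $p$-cocartesian (Proposition 2.4.1.5 of \cite{HTT}), use this together with the lifting hypothesis for $\Delta^{\{0\}}\to\Delta^1$ to conclude that $p$ is a left fibration, and then invoke the fact that a left fibration over a Kan complex is a Kan fibration. The only cosmetic difference is that you pass through the intermediate statement that $p$ is a cocartesian fibration and cite Proposition 2.4.2.4, whereas the paper cites Remark 2.4.1.4 directly to get the lifting against $\Lambda^n_0\subseteq\Delta^n$ for $n\geq 2$.
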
 

\begin{proof}  
It suffices to prove that $p$ is a left fibration, since $T$ is a Kan complex.  Every edge of 
$S$ is an equivalence and hence is $p$-cocartesian (Proposition 2.4.1.5 of \cite{HTT}).  
Therefore $p$ has the right lifting property against every horn inclusion of the form 
$\Lambda^n_0\subseteq \Delta^n$, $n\geq 2$ (Remark 2.4.1.4 of \cite{HTT}). Therefore, 
invoking the assumption that $p$ has the right lifting 
property against the map $\Delta^{\set{0}}\to \Delta^1$, it follows that 
$p$ is a left 
fibration.  
\end{proof}

\begin{lemma} 
\label{lem:inner anodyne lemma}
Let $B$ be an $\infty$-category.  Suppose that $i\colon A\to B$ is an 
acyclic cofibration in the Joyal model structure on 
$\sSet$, such that $i$ is a bijection on 0-simplices.  Then 
$i$ is inner anodyne.  
\end{lemma}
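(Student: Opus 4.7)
The plan is to factor $i$ as $A\xrightarrow{j} A'\xrightarrow{p} B$ with $j$ inner anodyne and $p$ an inner fibration, to show that $p$ is in fact a trivial Kan fibration, and to realize $i$ as a retract of $j$ by lifting. The factorization is produced by the small object argument applied to the set of inner horn inclusions. Several simple observations set the stage. First, $A'$ is an $\infty$-category, since $B$ is and $p$ is an inner fibration. Second, each inner horn inclusion $\Lambda^n_k\subseteq \Delta^n$ with $0<k<n$ has $n\geq 2$ and is bijective on $0$-simplices, so every inner anodyne map is bijective on $0$-simplices; in particular $j$ is, and the hypothesis on $i$ then forces $p$ to be bijective on $0$-simplices too. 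Finally, by two-out-of-three in the Joyal model structure, $p$ is a categorical equivalence.

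The main step is to upgrade these properties of $p$ to the conclusion that $p$ is a trivial Kan fibration. I would do this by first verifying that $p$ is a categorical fibration, after which the Joyal model structure supplies the conclusion (a categorical fibration which is also a categorical equivalence is a trivial Kan fibration). To see $p$ is a categorical fibration one checks the lifting of equivalences: given an equivalence $f\colon y_0\to y_1$ in $B$ and a vertex $x_0\in A'$ above $y_0$, the bijectivity on $0$-simplices supplies a unique vertex $x_1\in A'$ above $y_1$, and the induced map between the mapping spaces from $x_0$ to $x_1$ and from $y_0$ to $y_1$ is both a Kan fibration (a standard consequence of $p$ being an inner fibration between $\infty$-categories) and a weak homotopy equivalence (since categorical equivalences induce equivalences on mapping spaces), hence a trivial Kan fibration. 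The required lift $\tilde{f}$ of $f$ is therefore available, and it is automatically an equivalence because $p$ reflects equivalences.

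With $p$ a trivial Kan fibration and $i$ a monomorphism, the commutative square with top edge $j$, bottom edge $\mathrm{id}_B$, and vertical edges $i$ and $p$ admits a diagonal filler $r\colon B\to A'$ with $ri=j$ and $pr=\mathrm{id}_B$, displaying $i$ as a retract of $j$. Closure of inner anodyne maps under retracts then forces $i$ to be inner anodyne. The principal obstacle in the plan is the mapping-space argument establishing that $p$ is an isofibration; the remaining steps are routine bookkeeping around the small object argument and retract closure.
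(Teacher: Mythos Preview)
Your proof is correct and follows essentially the same route as the paper: factor $i=pj$ with $j$ inner anodyne and $p$ an inner fibration, show $p$ is a trivial Kan fibration, and exhibit $i$ as a retract of $j$. The only difference is that the paper asserts in one line that $p$ is a categorical fibration ``since $p$ is bijective on objects and $B$ is an $\infty$-category,'' whereas you spell out the isofibration check via the mapping-space argument (using that $p$ is a categorical equivalence, hence fully faithful, so the induced map on mapping spaces is a trivial Kan fibration); your expanded argument is exactly what justifies the paper's terse claim.
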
 

\begin{proof} 
Factor $i$ as $i = pj$, where $j\colon A\to B'$ is inner anodyne and $p\colon B'\to B$ 
is an inner fibration.  Then $p$ is a categorical fibration, since $p$ is bijective on 
objects and $B$ is an $\infty$-category.  Therefore $p$ is a trivial Kan fibration and 
hence has a section $s\colon B\to B'$, which exhibits $i$ as a retract of $j$.  Hence 
$i$ is inner anodyne.  
\end{proof}

\section{Correspondences} 
\label{sec:correspondences}
\subsection{The category of correspondences from $A$ to $B$} 
\label{subsec:category of correspondences}
We recall the notion of a correspondence between simplicial sets from 
Section 2.3.1 and Section 5.2.1 of \cite{HTT}.  

\begin{definition}[Lurie] 
\label{def:correspondence}
Let $A$ and $B$ be simplicial sets.    
A {\em correspondence} from 
$A$ to $B$ is a map $p\colon X\to \Delta^1$ with $p^{-1}(0) = B$ and 
$p^{-1}(1) = A$.
\end{definition} 

\begin{remark} 
We do not require that the map $p$ in the above definition 
is an inner fibration; we will reserve the term {\em fibrant} 
correspondence to describe such a map (see 
Section~\ref{subsec:model str on corr} below).  
Note also that we call a correspondence from $A$ to 
$B$ is what is called a correspondence from $B$ to 
$A$ in \cite{HTT}.  
\end{remark} 

\begin{remark} 
\label{rem:categ of correspondences}
We write $\Corr(A,B)$ for the subcategory of 
$(\sSet)_{/\Delta^1}$ whose objects are the correspondences from $A$ to $B$ and 
where a map $f\colon X\to Y$ is a map in $(\sSet)_{/\Delta^1}$ such that 
$f|A = \mathrm{id}_A$ and $f|B = \mathrm{id}_B$.  
\end{remark}  
  
\begin{remark} 
Clearly $B\sqcup A$, equipped with the canonical map $B\sqcup A\to \partial \Delta^1\to \Delta^1$ 
is an initial object of $\Corr(A,B)$.  If $p\colon X\to \Delta^1$ is a correspondence in 
$\Corr(A,B)$ and $u\colon \Delta^n\to X$ is a simplex, then the composite map 
$pu\colon \Delta^n\to \Delta^1$ has a unique decomposition $pu = i\star f$, where 
$i\colon \Delta^k\to \Delta^0$ and $f\colon \Delta^{n-k-1}\to \Delta^0$.  It follows that 
$ui$ factors through $B$, and $uf$ factors through $A$.  Therefore $ui\star uf$ is an $n$-simplex 
of $B\star A$.  This defines a unique map $X\to B\star A$, 
from which it follows that $B\star A$ is a terminal object of 
$\Corr(A,B)$.  
\end{remark} 

\begin{remark} 
\label{rem:reflector L}
There is a canonical full inclusion $i\colon \Corr(A,B)\hookrightarrow (\sSet)_{/B\star A}$.  
The inclusion $i$ has a left adjoint $L\colon (\sSet)_{/B\star A}\to \Corr(A,B)$ which exhibits 
$\Corr(A,B)$ as a full reflective subcategory of $(\sSet)_{/B\star A}$.  The reflector $L$ is 
defined on objects as follows: if $X\in (\sSet)_{/B\star A}$ with structure map $p$ then $L(X)$ is the 
correspondence defined by the pushout diagram 
\[
\begin{tikzcd}
p^{-1}(B\sqcup A) \arrow[d] \arrow[r] & X \arrow[d] \\ 
B\sqcup A \arrow[r] & L(X) 
\end{tikzcd}
\]  
\end{remark} 

\begin{remark} 
\label{rem:presentable}
As a full reflective subcategory of the presentable category 
$(\sSet)_{/B\star A} $, it follows (see 
Corollary 6.24 of \cite{AR}) that $\Corr(A,B)$ is presentable 
(here presentable is understood in the sense of Definition A.1.1.2 of 
\cite{HTT}).  In particular it follows that $\Corr(A,B)$ has all 
limits and colimits.  
\end{remark}

\subsection{The model structure on correspondences} 
\label{subsec:model str on corr}
Suppose now that $A$ and $B$ are $\infty$-categories.  The Joyal model structure 
on $\sSet$ induces a model structure on the slice category $(\sSet)_{/B\star A} $ 
in the usual way.   By definition, a map $X\to B\star A$ is a fibrant object in 
this model structure if and only if it is a categorical fibration.  

Let us say that an object in $\Corr(A,B)$ is {\em fibrant} if and only if the canonical map 
$X\to B\star A$ is fibrant in the induced model structure on $(\sSet)_{/B\star A} $.  
We have the following result. 

\begin{lemma} 
\label{lem:char of fib in corr}
Let $X\in \Corr(A,B)$, where $A$ and $B$ are 
$\infty$-categories.  The following statements are equivalent: 
\begin{enumerate} 
\item $X$ is fibrant 
\item the canonical map 
$p\colon X\to B\star A$ is an inner fibration
\item the canonical map $X\to \Delta^1$ is an inner fibration 
\item $X$ is an $\infty$-category.
\end{enumerate}  
\end{lemma}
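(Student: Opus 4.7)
The plan is to establish the cycle $(1) \Rightarrow (2) \Rightarrow (3) \Rightarrow (4) \Rightarrow (2) \Rightarrow (1)$. The implication $(1) \Rightarrow (2)$ is immediate since every categorical fibration is an inner fibration. For $(2) \Rightarrow (3)$ I would first check that the join projection $B \star A \to \Delta^1$ is itself an inner fibration when $A, B$ are $\infty$-categories: given an inner horn $\Lambda^n_i \to B \star A$ lying over a simplex $\Delta^n \to \Delta^1$ corresponding to a join decomposition $\Delta^n = \Delta^{\{0,\ldots,k\}} \star \Delta^{\{k+1,\ldots,n\}}$, the required filler either reduces to an inner horn filler inside $B$ or $A$ (in the ``pure'' cases where all vertices land in one factor) or, in the mixed case, is uniquely determined by the restrictions of the horn to the two join components --- both of which sit inside $\Lambda^n_i$. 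Composing inner fibrations then gives $(3)$, and $(3) \Rightarrow (4)$ follows by composing further with $\Delta^1 \to \Delta^0$.

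The main work lies in $(4) \Rightarrow (2)$. Given a lifting problem with $\alpha\colon \Lambda^n_i \to X$ and $\beta\colon \Delta^n \to B \star A$ satisfying $p\alpha = \beta|_{\Lambda^n_i}$, I would case on the join decomposition of $\Delta^n$ induced by $\beta$. In the pure cases (all of $\beta$ in $B$, or all in $A$), the condition $p^{-1}(B) = B$, resp.\ $p^{-1}(A) = A$, forces $\alpha$ to land in the corresponding subcomplex of $X$, and the desired filler is then $\beta$ itself, viewed as a simplex of $B \subseteq X$ or of $A \subseteq X$. In the mixed case with decomposition at some $0 \leq m \leq n-1$, both faces $\Delta^{\{0,\ldots,m\}}$ and $\Delta^{\{m+1,\ldots,n\}}$ lie inside $\Lambda^n_i$ (since $n \geq 2$ and $0 < i < n$), so any filler $\gamma\colon \Delta^n \to X$ automatically restricts correctly on these two faces; since an $n$-simplex of $B \star A$ with fixed image in $\Delta^1$ is determined by its restrictions to $\Delta^{\{0,\ldots,m\}}$ and $\Delta^{\{m+1,\ldots,n\}}$, this forces $p\gamma = \beta$ for free. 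Such a filler $\gamma$ exists because $X$ is an $\infty$-category.

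Finally, $(2) \Rightarrow (1)$ reduces to verifying that $p$ has the right lifting property against the inclusion $\Delta^{\{0\}} \hookrightarrow J$ of the Joyal interval. Any equivalence in $B \star A$ must project to an identity in $\h(\Delta^1) = [1]$, so it is already an equivalence in $B$ or in $A$; given an equivalence $f\colon b \to b'$ in $B$ and a vertex $x$ of $X$ with $p(x) = b$, the constraint $p^{-1}(B) = B$ forces $x = b$, so $f$ itself, regarded as an edge of $B \subseteq X$, provides the desired lift, and analogously for equivalences in $A$. The principal obstacle is the compatibility check $p\gamma = \beta$ in the mixed case of $(4) \Rightarrow (2)$; the key insight is that the defining data of a simplex of a join are already all visible on any inner horn, once $n \geq 2$.
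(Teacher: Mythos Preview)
Your proposal is correct and follows essentially the same approach as the paper. The core arguments---the join-decomposition case analysis for $(4)\Rightarrow(2)$ and the observation that every equivalence in $B\star A$ already lies in $B$ or in $A$ for $(2)\Rightarrow(1)$---are identical to the paper's. The only cosmetic differences are organizational: the paper handles $(2)\Rightarrow(3)$ and $(3)\Rightarrow(4)$ by simply noting that $B\star A$ is an $\infty$-category (HTT 1.2.8.3), so $(2)\Rightarrow(4)$ is immediate, rather than checking that $B\star A\to\Delta^1$ is an inner fibration; and in the pure case of $(4)\Rightarrow(2)$ your observation that $\beta$ itself is the filler (since $p|_B=\id_B$) is slightly cleaner than the paper's phrasing.
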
 

\begin{proof} 
The equivalence of statements (3) and (4) is clear.  It is also clear that 
(1) $\Rightarrow$ (2).  We prove that (2) $\Rightarrow$ (1).  Suppose 
that the canonical map $p\colon X\to B\star A$ is an inner fibration.     
Since $B\star A$ is an $\infty$-category (Proposition 1.2.8.3 of \cite{HTT}), 
it follows that $X$ is an $\infty$-category. Hence $p\colon X\to B\star A$ 
is a categorical fibration if and only if $\h(X)\to \h(B\star A)$ is an 
isofibration, i.e.\ has the right lifting property against the inclusion $\set{0}\to J$, 
where $J$ denotes the groupoid interval.  
We have $\h(B\star A) = \h(B)\star \h(A)$, where the right hand side 
denotes the join of the categories $\h(B)$ and $\h(A)$.  Therefore the only 
isomorphisms in $\h(B\star A)$ are represented by equivalences in $B$ or equivalences 
in $A$. Since $X\in \Corr(A,B)$ these equivalences lift automatically to equivalences  
in $X$.  Finally, to complete the proof, we shall prove that (2) $\iff$ (4).  
The implication (2) $\Rightarrow$ (4) is immediate from the fact that 
$B\star A$ is an $\infty$-category.  To prove the converse, assume that 
$X$ is an $\infty$-category and consider a commutative diagram 
\[
\begin{tikzcd} 
\Lambda^n_i \arrow[d] \arrow[r,"u"] & X \arrow[d,"p"] \\ 
\Delta^n \arrow[r,"v"'] & B\star A 
\end{tikzcd} 
\]
We will prove that this map is compatible 
with the projection to $B\star A$.  The map $v\colon \Delta^n\to B\star A$ decomposes 
as $v = x\star y$, where $x\colon \Delta^k\to B$ and $y\colon \Delta^{n-1-k}\to A$.  
If $k=-1$ or $k=n$ then we can find a diagonal filler 
for the diagram above since both $A$ and $B$ are 
$\infty$-categories.   Otherwise, we have $\Delta^k\subseteq \Lambda^n_i$ 
and $\Delta^{n-1-k}\subseteq \Lambda^n_i$.  
Since $X$ is an $\infty$-category, we may extend the map 
$u$ along the inner horn inclusion $\Lambda^n_i\hookrightarrow\Delta^n$  
to obtain a map $w\colon \Delta^n\to X$.   It follows that 
$w|\Delta^k = x$ and $w|\Delta^{n-1-k}=y$ and hence $pw = v$.    
\end{proof}  

More generally, we have  

\begin{lemma} 
\label{lem:char of fibns between fib corr}
Suppose that $f\colon X\to Y$ is a map between fibrant objects in $\Corr(A,B)$.  
Then the underlying map of simplicial sets is a categorical fibration if 
and only if it is an inner fibration.  
\end{lemma}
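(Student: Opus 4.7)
The implication \emph{categorical fibration} $\Rightarrow$ \emph{inner fibration} is immediate. For the converse, observe that $X$ and $Y$ are $\infty$-categories by Lemma~\ref{lem:char of fib in corr}. The plan is to invoke the standard criterion that a map between $\infty$-categories is a categorical fibration if and only if it is an inner fibration such that the induced functor on homotopy categories is an isofibration; equivalently, beyond being an inner fibration, $f$ must have the right lifting property against the inclusion $\set{0}\to J$. So I only need to verify this one additional lifting property.

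The key observation is that any map $e\colon J\to Y$ factors through one of the fibers $A$ or $B$ of $p_Y\colon Y\to \Delta^1$. Indeed, the composite $p_Y\circ e\colon J\to \Delta^1$ must be constant: if the underlying vertex map were the non-trivial bijection, then the non-degenerate $1$-simplex from $1$ to $0$ in $J$ would need to map to a $1$-simplex from $1$ to $0$ in $\Delta^1$, but no such $1$-simplex exists. Hence $e$ factors through either $p_Y^{-1}(0) = B$ or $p_Y^{-1}(1) = A$.

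Given a lifting problem with $x\colon \set{0}\to X$ on top and $e\colon J\to Y$ below, suppose without loss of generality that $e$ factors through $B$. Since $f$ is a morphism in $\Corr(A,B)$ we have $p_X = p_Y\circ f$ and $f|B = \id_B$, hence $f^{-1}(B) = B$; this forces $x$ to equal the vertex $e(0)$ regarded in $B\subseteq X$. The composite $J\xrightarrow{e} B \hookrightarrow X$ is then a lift of $e$ extending $x$. I do not anticipate any real obstacle here: the argument reduces to combining the rigidity of $\Corr(A,B)$-morphisms (they are the identity on $A$ and on $B$) with the absence of non-trivial isomorphisms in $\Delta^1$.
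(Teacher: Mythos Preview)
Your proposal is correct and follows essentially the same approach as the paper. Both arguments reduce the question to the isofibration condition and then observe that every equivalence in $Y$ must lie entirely in $A$ or in $B$, whence it lifts automatically because $f$ is the identity on these fibers; the only cosmetic difference is that you deduce this constraint from the projection $Y\to\Delta^1$ (noting that $J\to\Delta^1$ is necessarily constant), whereas the paper routes the argument through the projection $Y\to B\star A$ and the description of isomorphisms in $\h(B\star A)=\h(B)\star\h(A)$.
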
 

\begin{proof} 
As in the proof of Lemma~\ref{lem:char of fib in corr} above, we need to check that 
the induced map $\h(X)\to \h(Y)$ is an isofibration of categories.  
An isomorphism in $\h(Y)$ maps to an isomorphism in $\h(B\star A)$ and 
hence is represented by either an equivalence in $B$ or an equivalence in $A$.  
The result then follows since $f$ is a map in $\Corr(A,B)$.  
\end{proof}

\begin{theorem} 
\label{thm:model str for corr}
Let $A$ and $B$ be $\infty$-categories.  There exists the structure of a left proper, 
combinatorial model category on $\Corr(A,B)$ for which a map $X\to Y$ is a 
\begin{itemize} 
	\item cofibration if the underlying map of simplicial sets is a monomorphism; 
	\item weak equivalence if the underlying map of simplicial sets is a categorical 
	equivalence
\end{itemize} 
and for which the fibrant objects are the correspondences $X$ whose 
underlying simplicial set is an $\infty$-category.   
\end{theorem}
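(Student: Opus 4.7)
My plan is to construct this model structure by left induction, in the sense of \cite{BHKKRS}, from the Joyal model structure on $(\sSet)_{/B\star A}$ along the fully faithful reflective inclusion $i\colon \Corr(A,B) \hookrightarrow (\sSet)_{/B\star A}$ described in Remark~\ref{rem:reflector L}. Since $i$ is fully faithful, a morphism $f$ of $\Corr(A,B)$ satisfies $i(f) = f$ as a map of simplicial sets, so the induced cofibrations on $\Corr(A,B)$ are exactly the monomorphisms of simplicial sets and the induced weak equivalences are exactly the categorical equivalences, matching the description in the statement.

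To apply the existence theorem of \cite{BHKKRS} I would verify that $\Corr(A,B)$ is locally presentable, which is Remark~\ref{rem:presentable}; that the slice Joyal model structure on $(\sSet)_{/B\star A}$ is combinatorial and left proper, which is standard; and the acyclicity condition, namely that every map $f\colon X \to Y$ in $\Corr(A,B)$ having the right lifting property against every monomorphism of $\Corr(A,B)$ is a categorical equivalence. For acyclicity I would show that such an $f$ admits a lift against every boundary inclusion $\partial\Delta^n \hookrightarrow \Delta^n$ over $B\star A$: given the join decomposition of the structure map $\Delta^n \to B\star A$ as $\Delta^k \star \Delta^{n-1-k}$, the preimage $p^{-1}(B\sqcup A) = \Delta^k \sqcup \Delta^{n-1-k}$ is the same for $\partial\Delta^n$ and $\Delta^n$, so the pushout formula of Remark~\ref{rem:reflector L} exhibits $L(\partial\Delta^n) \to L(\Delta^n)$ as a monomorphism in $\Corr(A,B)$; the adjunction $L \dashv i$ then transports the given lifting problem into $\Corr(A,B)$, where $f$ lifts by assumption. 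Hence $f$ is a trivial Kan fibration of simplicial sets, and in particular a categorical equivalence.

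It remains to identify the fibrant objects. If $X$ is an $\infty$-category, then by Lemma~\ref{lem:char of fib in corr} the structure map $X\to B\star A$ is a categorical fibration, hence a Joyal fibration in $(\sSet)_{/B\star A}$, and so the terminal map in $\Corr(A,B)$ has the right lifting property against all trivial cofibrations of $\Corr(A,B)$; thus $X$ is fibrant. Conversely, given a fibrant $X$ and an inner horn $u\colon \Lambda^n_i \to X$, I would extend the composite $\Lambda^n_i \to X \to B\star A$ to a map $\Delta^n \to B\star A$ (possible since $B\star A$ is an $\infty$-category); since $L(\Lambda^n_i) \to L(\Delta^n)$ arises as a pushout of the inner anodyne map $\Lambda^n_i \to \Delta^n$, it is itself a trivial cofibration in the Joyal model structure and hence in $\Corr(A,B)$; fibrancy of $X$ then supplies the required lift, which corresponds via the adjunction to an extension $\Delta^n \to X$ of $u$. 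Left properness of the new structure is inherited from the slice Joyal model structure since cofibrations and weak equivalences are detected on underlying simplicial sets. The principal obstacle is the acyclicity check together with the related analysis of how the reflector $L$ behaves on standard boundary and horn inclusions.
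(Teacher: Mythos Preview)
Your argument is correct, and indeed the paper remarks immediately after its proof that the resulting model structure is left induced in the sense of \cite{BHKKRS} along the adjunction $(L,i)$.  However, the paper does not prove the theorem by invoking the left-induction machinery; instead it applies Proposition~A.2.6.13 of \cite{HTT} (Smith's recognition theorem) directly to $\Corr(A,B)$.  The substantive verifications end up being the same: the paper writes down an explicit generating set $C_0$ for the cofibrations---precisely the maps $L(\partial\Delta^n)\to L(\Delta^n)$ that you analyze---and checks that any map with the right lifting property against $C_0$ is a trivial Kan fibration, which is exactly your acyclicity check.  What your route buys is a cleaner conceptual packaging (fibrations and fibrant objects are by definition created by $i$), at the cost of importing the \cite{BHKKRS} existence theorem; the paper's route is more self-contained within \cite{HTT} but has to verify the ``perfect class'' condition on $W$ and identify the fibrant objects by hand via Lemma~\ref{lem:char of fib in corr}.

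One small point worth tightening in your fibrant-object argument: when the inner horn $\Lambda^n_i\to X$ lies entirely over $B$ (or entirely over $A$), the map $L(\Lambda^n_i)\to L(\Delta^n)$ collapses to the identity on $B\sqcup A$ and the lift you obtain from fibrancy is vacuous.  In that case the horn already factors through $B\subseteq X$ (respectively $A\subseteq X$), and the filler exists because $B$ and $A$ are themselves $\infty$-categories; you should say this explicitly.
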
 

\begin{proof} 
We use Proposition A.2.6.13 from \cite{HTT}.  To begin with, as observed  
in Remark~\ref{rem:presentable} above, 
$\Corr(A,B)$ is presentable.  We verify the three conditions (1), (2) 
and (3) from op.\ cit. Let $C$ denote the class of cofibrations in $\Corr(A,B)$ 
and let $W$ denote the class of weak equivalences in $\Corr(A,B)$.  The weakly 
saturated class of monomorphisms in $(\sSet)_{/B\star A} $ is generated by the 
set of boundary inclusions $\partial\Delta^n\subseteq \Delta^n$ in $(\sSet)_{/B\star A} $ 
for $n\geq 0$.  The simplices in $B\star A$ are of the following three types: 
$x\star \emptyset\colon \Delta^n\star \emptyset\to B\star A$, $x\star y\colon 
\Delta^m\star \Delta^n\to B\star A$, and $\emptyset\star y\colon \emptyset\star 
\Delta^n\to B\star A$.  It follows that $C$ is generated as a weakly saturated 
class by the set $C_0$ of monomorphisms in $\Corr(A,B)$ of the form 
\begin{align*}
& (\emptyset\star \partial\Delta^n)\cup_{\emptyset\sqcup \partial\Delta^n}(B\sqcup A)\to 
(\emptyset\star \Delta^n)\cup_{\emptyset\sqcup \Delta^n}(B\sqcup A)                       \\ 
& (\partial\Delta^m\star \emptyset)\cup_{\partial\Delta^m\sqcup \emptyset}(B\sqcup A) \to 
(\Delta^m\star \emptyset)\cup_{\Delta^m\sqcup\emptyset}(B\sqcup A)                       \\ 
& (\partial\Delta^m\star \Delta^n\cup \Delta^m\star \partial\Delta^n)_{
(\partial\Delta^m\star \Delta^n\sqcup \Delta^m\star \partial\Delta^n)}(B\sqcup A)\to 
(\Delta^m\star\Delta^n)\cup_{\Delta^m\sqcup \Delta^n}(B\sqcup A)
\end{align*}

For (1), observe that the class $W$ is the 
inverse image of the class of categorical equivalences of simplicial sets 
under the forgetful functor $\Corr(A,B)\to \sSet$.  It follows that $W$ is perfect 
by Corollary A.2.6.12 of \cite{HTT}. 

(2) follows immediately from the fact that the Joyal model structure on $\sSet$ 
is left proper.  

For (3), observe that if $f\colon X\to Y$ is a map in $\Corr(A,B)$ which has the 
right lifting property with respect to every morphism in $C_0$, then $f$ is a trivial 
Kan fibration.  For then $f$ has the right lifting property with respect to every 
monomorphism in $\sSet$.  

For the characterization of the fibrant objects, observe that $X\to B\star A$ 
has the right lifting property with respect to all maps in $C\cap W$ if and only 
if the underlying map of simplicial sets is a categorical fibration.  We then 
apply Lemma~\ref{lem:char of fib in corr}.    
\end{proof} 

\begin{remark} 
The model structure for correspondences is the left 
induced model structure (in the sense of \cite{BHKKRS}) 
on $\Corr(A,B)$ associated to the 
adjoint pair $(L,i)$ and the Joyal model 
structure on $(\sSet)_{/B\star A}$.  
\end{remark} 

\begin{remark} 
\label{rem:simp enrichment}
The category $(\sSet)_{/B\star A} $ has a natural 
structure as a simplicial category 
which is tensored and cotensored over 
$\sSet$.  This structure induces on $\Corr(A,B)$ 
the structure of a simplicial category which is 
tensored and cotensored over $\sSet$.  If $X\in \Corr(A,B)$ and $K$ is a simplicial 
set, then the cotensor $X\otimes K$ is defined by the pushout diagram 
\[
\begin{tikzcd} 
(B\times K)\sqcup (A\times K) \arrow[r] \arrow[d] & X\times K \arrow[d] \\ 
B\sqcup A \arrow[r] & X\otimes K 
\end{tikzcd} 
\]
where the left hand vertical map is induced by the canonical projections 
$B\times K\to B$ and $A\times K\to A$.  The construction $X\otimes K$ extends 
to define a functor $\otimes \colon \Corr(A,B)\times \sSet\to \Corr(A,B)$.  Observe 
that for a fixed correspondence $X\in \Corr(A,B)$, the induced functor 
$X\otimes (-)\colon \sSet\to \Corr(A,B)$ commutes with colimits.  

If $X$ is again a correspondence in $\Corr(A,B)$ and $K$ is a simplicial set, 
then the cotensor $^KY$ is defined by the pullback diagram 
\[
\begin{tikzcd} 
^KY \arrow[d] \arrow[r] & Y^K \arrow[d] \\ 
B\star A \arrow[r] & (B\star A)^K 
\end{tikzcd} 
\]
where the lower horizontal map is conjugate to the canonical projection 
$(B\star A)\times K\to B\star A$.  Note that $^KY$ so defined is a correspondence: 
the two squares in the commutative diagram 
\[
\begin{tikzcd} 
B\sqcup A \arrow[d] \arrow[r] & B^K\sqcup A^K \arrow[d] \arrow[r] & Y^K \arrow[d] \\ 
B\sqcup A \arrow[r] & B^K\sqcup A^K \arrow[r] & (B\star A)^K 
\end{tikzcd} 
\]
are both pullbacks, and the composite map $B\sqcup A\to (B\star A)^K$ factors 
as $B\sqcup A\to B\star A\to (B\star A)^K$, where the second map is the canonical map 
above.  The adjointness $(-)\otimes K\dashv\  ^K(-)$ is clear.
%see notebook starting 20.07.2017
It follows 
(Lemma II 2.2 of \cite{GJ}) that $\Corr(A,B)$ has the structure of a simplicial category, 
tensored and cotensored over $\sSet$.      
\end{remark} 

\begin{remark} 
Let $A$ and $B$ be $\infty$-categories.  The model structure for correspondences on 
$\Corr(A,B)$ is enriched over the Joyal model structure via the simplicial enrichment 
described in Remark~\ref{rem:simp enrichment}.  
\end{remark} 

\subsection{Distributors to correspondences, and back again} 
\label{subsec:subdivision}
Recall that the 
{\em edgewise subdivision} of a simplicial set $X$ (in the sense of 
Segal \cite{Segal}) is defined by composing the functor $X\colon \Delta^{\op}\to \Sets$
with the opposite of the `doubling functor' 
\begin{align*}
& d\colon \Delta\to \Delta \\ 
& [n]\mapsto [n]^{\op}\star [n] 
\end{align*} 
This construction can be used to relate the category 
$(\sSet)_{/(B^{\op}\times A)}$ with the 
category of correspondences from $A$ to $B$.  The relation 
is as follows.    

Observe that the doubling functor above induces a functor between simplex categories   
\[
\sigma\colon \Delta_{/(B^{\op}\times A)}\to \Delta_{/(B\star A)}  
\]
which sends a pair $(u,v)\colon \Delta^n\to B^{\op}\times A$ to  
\[
\sigma(u,v) = u^{\op}\star v\colon (\Delta^n)^{\op}\star \Delta^n\to B\star A. 
\]
The functor $\sigma$ induces an adjunction 
\[
\sigma_!\colon (\sSet)_{/(B^{\op}\times A)}\rightleftarrows (\sSet)_{/B\star A} \colon \sigma^* 
\]
and in fact the functor $\sigma^*$ has a further right adjoint $\sigma_*\colon (\sSet)_{/(B^{\op}\times A)}
\to (\sSet)_{/B\star A} $.

We make the following observations about the functors $\sigma_!$ and $\sigma^*$.  

\begin{lemma} 
\label{lem:sigma_! mono}
The functor $\sigma_!\colon (\sSet)_{/(B^{\op}\times A)}\to (\sSet)_{/B\star A}$ sends 
monomorphisms to monomorphisms.  
\end{lemma}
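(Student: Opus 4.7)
The plan is a saturation argument. Since $\sigma_!$ is a left adjoint (to $\sigma^*$), it preserves all colimits; consequently the class of morphisms in $(\sSet)_{/(B^{\op}\times A)}$ that $\sigma_!$ sends to monomorphisms in $(\sSet)_{/(B\star A)}$ is weakly saturated, as monomorphisms of simplicial sets are closed under pushouts, transfinite composites, and retracts. Since the monomorphisms in $(\sSet)_{/(B^{\op}\times A)}$ form the weakly saturated class generated by boundary inclusions $\partial\Delta^n\hookrightarrow \Delta^n$ equipped with arbitrary structure maps $\Delta^n\to B^{\op}\times A$, it will suffice to check that $\sigma_!$ sends each such boundary inclusion to a monomorphism of simplicial sets.

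For this I will compute $\sigma_!$ explicitly on representables and on boundaries. On $\Delta^n$ we have $\sigma_!(\Delta^n) = (\Delta^n)^{\op}\star \Delta^n \cong \Delta^{2n+1}$, whose vertex set we identify with $\{\bar n,\dots,\bar 0,0,\dots,n\}$. Writing $\partial\Delta^n = \mathrm{colim}_{\emptyset\neq S\subsetneq [n]}\Delta^S$ and using that $\sigma_!$ preserves colimits yields
\[
\sigma_!(\partial\Delta^n) \;=\; \mathrm{colim}_{\emptyset\neq S\subsetneq [n]}\, (\Delta^S)^{\op}\star \Delta^S,
\]
with the canonical map to $\sigma_!(\Delta^n) = \Delta^{2n+1}$ induced by the natural inclusions of each $(\Delta^S)^{\op}\star \Delta^S$ as the sub-simplex of $\Delta^{2n+1}$ spanned by the vertices indexed by $S$ on each side of the join. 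The key observation is that the diagram of these subcomplexes is closed under pairwise intersection inside $\Delta^{2n+1}$, because $\bigl((\Delta^S)^{\op}\star \Delta^S\bigr) \cap \bigl((\Delta^{S'})^{\op}\star \Delta^{S'}\bigr) = (\Delta^{S\cap S'})^{\op}\star \Delta^{S\cap S'}$, which is either empty or another term of our diagram. Since $\sSet$ is a presheaf topos and colimits are computed levelwise in sets, a colimit of monomorphisms into a fixed simplicial set indexed by a poset whose image is closed under binary intersections coincides with the union of the subobjects. Hence $\sigma_!(\partial\Delta^n)$ is identified with the subcomplex $\bigcup_S (\Delta^S)^{\op}\star \Delta^S$ of $\Delta^{2n+1}$, and in particular $\sigma_!(\partial\Delta^n)\to \sigma_!(\Delta^n)$ is a monomorphism, which completes the proof.

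The main obstacle I anticipate is the colimit-equals-union step: a colimit of monomorphisms in an arbitrary category need not be a monomorphism, and one needs to exploit that $\sSet$ is a presheaf topos to reduce to a levelwise statement in $\mathcal{S}\mathrm{et}$. Once this is in hand, the identification $\sigma_!(\Delta^n)=(\Delta^n)^{\op}\star\Delta^n$ and the saturation reduction are both routine.
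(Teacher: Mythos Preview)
Your proof is correct and takes a different route from the paper's. The paper asserts directly that $\sigma_!(f)_n$ coincides with $f_{2n+1}\colon X_{2n+1}\to Y_{2n+1}$, from which the lemma would be immediate; but that formula describes $\sigma^*$, not its left adjoint $\sigma_!$ (for instance $\sigma_!(\Delta^0)\cong(\Delta^0)^{\op}\star\Delta^0\cong\Delta^1$ has two vertices while $(\Delta^0)_1$ is a singleton), so the paper's one-line justification appears to conflate the two adjoints. Your approach---reducing by saturation to boundary inclusions and then identifying $\sigma_!(\partial\Delta^n)$ with the union $\bigcup_{\emptyset\neq S\subsetneq[n]}(\Delta^S)^{\op}\star\Delta^S$ inside $\sigma_!(\Delta^n)\cong\Delta^{2n+1}$---is sound: the saturation step is valid since $\sigma_!$ preserves colimits and monomorphisms in a topos are stable under pushout, transfinite composition, and retracts; and the colimit-equals-union step is justified exactly as you say, because the family of subcomplexes $(\Delta^S)^{\op}\star\Delta^S$ is closed under pairwise intersection in $\Delta^{2n+1}$ (with empty intersection when $S\cap S'=\emptyset$), so any two simplices identified in $\Delta^{2n+1}$ are already identified in the colimit. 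Your argument thus supplies a complete proof where the paper's shortcut does not.
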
 

\begin{proof}
Let $f\colon X\to Y$ be a monomorphism in $(\sSet)_{/(B^{\op}\times A)}$.  The map 
$\sigma_!(f)\colon \sigma_!(X)\to \sigma_!(Y)$ is a monomorphism if and only 
if the underlying map of simplicial sets is a monomorphism; therefore it suffices 
to check that for every $n\geq 0$ the induced map $\sigma_!(f)_n\colon \sigma_!(X)_n\to 
\sigma_!(Y)_n$ is a monomorphism of sets.  But $\sigma_!(f)_n$ is 
easily seen to be the map 
$f_{2n+1}\colon X_{2n+1}\to Y_{2n+1}$ which is a monomorphism by hypothesis.    
\end{proof} 

\begin{lemma} 
\label{lem:sigma*}
If $f\colon C\to A$ and $g\colon D\to B$ are maps determining objects $D\star C$ and $D^{\op}\times C$ 
of $(\sSet)_{/B\star A}$ and $(\sSet)_{/(B^{\op}\times A)}$  
respectively, then 
\[
\sigma^*(D\star C) = D^{\op}\times C.  
\]
\end{lemma}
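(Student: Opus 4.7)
The plan is to verify the claimed equality of objects in $(\sSet)_{/(B^{\op}\times A)}$ by computing $n$-simplices on both sides. Since $\sigma^*$ is defined as the restriction functor associated with $\sigma\colon \Delta_{/(B^{\op}\times A)}\to \Delta_{/(B\star A)}$, there is an explicit formula: an $n$-simplex of $\sigma^*(Y)$ lying over $(u,v)\colon \Delta^n\to B^{\op}\times A$ is precisely a simplex $\Delta^{2n+1}\to Y$ lying over $\sigma(u,v) = u^{\op}\star v\colon \Delta^{2n+1}\to B\star A$. I would begin by recording this formula and then apply it with $Y = D\star C$ equipped with the structure map $g\star f$.

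The second step is the heart of the argument. By the universal property of the join, any map $w\colon \Delta^{2n+1}\to D\star C$ decomposes uniquely as $w = w_1\star w_2$ for some partition $\Delta^{2n+1}=\Delta^k\star \Delta^{2n-k}$ together with maps $w_1\colon \Delta^k\to D$ and $w_2\colon \Delta^{2n-k}\to C$ (with the convention that $k=-1$ or $k=2n+1$ puts the whole simplex into one factor). Compatibility with $u^{\op}\star v$ under $g\star f$ then forces $k=n$: the vertices $0,\dots,n$ of $\Delta^{2n+1}$ land in $B$ under $u^{\op}\star v$ and hence must lift to $D$, while the vertices $n+1,\dots,2n+1$ land in $A$ and must lift to $C$. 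Thus the partition is forced to be the canonical $\Delta^{2n+1} = (\Delta^n)^{\op}\star \Delta^n$, with $g\circ w_1 = u^{\op}$ and $f\circ w_2 = v$. The datum of such a $w$ is therefore exactly a pair consisting of a simplex $\Delta^n\to D^{\op}$ lying over $u$ and a simplex $\Delta^n\to C$ lying over $v$ — i.e.\ an $n$-simplex of $D^{\op}\times C$ over $(u,v)$.

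Finally, I would note that this bijection is natural in $(u,v)\in \Delta_{/(B^{\op}\times A)}$: face and degeneracy maps in $B^{\op}\times A$ act by composition on $(u,v)$, and correspondingly on $u^{\op}\star v$ in the obvious way (the doubling functor $d$ is compatible with simplicial operators), so the identifications commute with the simplicial structure maps. This yields the equality $\sigma^*(D\star C) = D^{\op}\times C$ of objects in $(\sSet)_{/(B^{\op}\times A)}$.

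I do not anticipate any real obstacle: once the explicit formula for $\sigma^*$ is unpacked, the statement reduces entirely to the universal property of the join together with the observation that the structure maps $g\star f$ and $u^{\op}\star v$ rigidly constrain the join decomposition. The only point that needs mild care is tracking the opposite in the first factor, which arises because $\sigma$ sends $(u,v)$ to $u^{\op}\star v$ rather than $u\star v$.
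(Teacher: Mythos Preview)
Your proposal is correct and takes essentially the same approach as the paper: both arguments compute $n$-simplices of $\sigma^*(D\star C)$ directly and use the unique join-decomposition of a map $(\Delta^n)^{\op}\star\Delta^n\to D\star C$ to identify such a simplex with a pair $(x,y)$ lying over $(u,v)$. The paper packages the decomposition step by citing that $\star\colon \sSet\times\sSet\to(\sSet)_{/\Delta^1}$ is fully faithful, whereas you spell out explicitly how compatibility with $u^{\op}\star v$ forces the partition $k=n$; these are the same idea.
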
 

\begin{proof} 
If $\phi\colon (\Delta^n)^{\op}\star \Delta^n\to D\star C$ is a map in $(\sSet)_{/B\star A}$ such 
that $(g\star f)\phi = u^{\op}\star v$ where $u\colon \Delta^n\to B^{\op}$ and $v\colon 
\Delta^n\to A$, then 
$\phi$ is necessarily of the form $\phi = x^{\op}\star y$ for unique simplices $x\colon \Delta^n\to D^{\op}$ 
and $y\colon \Delta^n\to C$ with $x^{\op} = \phi|(\Delta^n)^{\op}$ and $y = \phi|\Delta^n$.  The key 
point here is the canonical functor $\star \colon \sSet\times \sSet\to (\sSet)_{/\Delta^1}$, defined by 
the join operation, is fully faithful (see Proposition 3.5 in \cite{J1}).    
\end{proof}

We write $a_!\colon (\sSet)_{/(B^{\op}\times A)}\to \Corr(A,B)$ for the composite functor 
$a_!:= L\sigma_!$, and we write $a^*\colon \Corr(A,B)\to (\sSet)_{/(B^{\op}\times A)}$ for 
the composite functor $a^*:= \sigma^* i$.  The functors $a_!$ and $a^*$ form 
an adjoint pair $(a_!,a^*)$.  

\begin{remark} 
\label{rem:calculation of unit map}
Observe that if $f\colon C\to A$ and $g\colon D\to B$ are maps then the functor $\sigma^*\colon (\sSet)_{/B\star A}\to 
(\sSet)_{/(B^{\op}\times A)}$ sends the object $D\sqcup C$ to the initial object $\emptyset$ of $(\sSet)_{/(B^{\op}\times A)}$. 
It follows that the endo-functor $a^*a_!$ of $(\sSet)_{/(B^{\op}\times A)}$ is isomorphic to the endo-functor $\sigma^*\sigma_!$.  
It follows that $a^*a_!$ preserves all colimits and hence is determined by its value on the 
$n$-simplices $\Delta^n\to B^{\op}\times A$ for $n\geq 0$.  A short 
calculation, using Lemma~\ref{lem:sigma*}, shows that in fact the unit map 
$\Delta^n\to a^*a_!(\Delta^n)$ is isomorphic to the diagonal map $\Delta^n\to \Delta^n\times 
\Delta^n$, where $\Delta^n\times \Delta^n$ is regarded as an object of 
$(\sSet)_{/(B^{\op}\times A)}$ via the map $f\times g\colon \Delta^n\times \Delta^n\to B^{\op}\times A$, where 
$(f,g)\colon \Delta^n\to B^{\op}\times A$.    
\end{remark}

\begin{remark} 
\label{rem:a* right adjoint}
We observe that the functor $a^*\colon \Corr(A,B)\to (\sSet)_{/(B^{\op}\times A)}$ 
has a right adjoint $a_*\colon (\sSet)_{/(B^{\op}\times A)}\to \Corr(A,B)$.  To see this, 
it suffices to prove that the functor $a^*\colon \Corr(A,B)\to (\sSet)_{/(B^{\op}\times A)}$ preserves 
colimits.  This follows from the fact that $\sigma^*$ preserves colimits and the 
fact that $\sigma^*iL = \sigma^*$ (this last fact can easily be seen using the 
observation made in Remark~\ref{rem:calculation of unit map}).  
% Suppose given a diagram $X\colon D\to \Corr(A,B)$ 
% where $D$ is a small category.  The colimit $\varinjlim_{d\in D}X_d$ in 
% $\Corr(A,B)$ is formed by first forming the corresponding colimit in $\sSet_{/(B\star A)}$ 
% and then applying the reflector $L$.  Since $\sigma^*$ commutes with limits and 
% $\sigma^*(A\sqcup B) = \emptyset$, we have a pullback diagram 
% \[
% \begin{tikzcd} 
% \sigma^*(p^{-1}(A\sqcup B)) \arrow[d] \arrow[r] & \sigma^*(\varinjlim_{d\in D}i(X_d)) \arrow[d,"p"] \\ 
% \emptyset \arrow[r] & B^{\op}\times A 
% \end{tikzcd} 
% \]
% It follows that $\sigma^*(p^{-1}(A\sqcup B)) = \emptyset$.  Therefore, since 
% $\sigma^*$ commutes with colimits, it follows that $a^*(\varinjlim_{d\in D}X_d) = 
% \varinjlim_{d\in D}a^*(X_d)$.  
If $X\to B^{\op}\times A$ is an object of $(\sSet)_{/(B^{\op}\times A)}$ 
then $a_*(X)$ is the correspondence from $A$ to $B$ such that 
\[
(\sSet)_{/B\star A}(\Delta^n,ia_*(X)) = 
(\sSet)_{/(B^{\op}\times A)}(a^*L(\Delta^n),X)
\] 
for every $n$-simplex 
$\Delta^n\to B\star A$.  
\end{remark} 

\begin{proposition} 
\label{prop:delta! preserves left anodyne}
The functor $a_!\colon (\sSet)_{/(B^{\op}\times A)}\to 
\Corr(A,B)$ sends left anodyne morphisms in $(\sSet)_{/(B^{\op}\times A)}$ 
to inner anodyne morphisms in $\Corr(A,B)$.    
\end{proposition}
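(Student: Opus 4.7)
The plan is to apply Proposition~\ref{prop:contains left anodynes}. Let $\mathcal{A}$ denote the class of monomorphisms $f$ of simplicial sets with the property that, for every structure map on the codomain making $f$ a morphism in $(\sSet)_{/(B^{\op}\times A)}$, the resulting morphism $a_!(f)$ is inner anodyne. Because $a_!$ is a left adjoint and the inner anodyne maps form a saturated class, $\mathcal{A}$ is itself saturated; and by Proposition~\ref{prop:inner anodyne right cancel} (right cancellation for inner anodyne maps) $\mathcal{A}$ also satisfies the right cancellation property. It therefore suffices, by Proposition~\ref{prop:contains left anodynes}, to show that $\mathcal{A}$ contains every initial vertex inclusion $\Delta^{\{0\}}\hookrightarrow \Delta^n$ for $n\geq 1$.

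So fix $n\geq 1$ and a structure map $(u,v)\colon \Delta^n\to B^{\op}\times A$. Unwinding $a_!=L\sigma_!$, and identifying $(\Delta^n)^{\op}\star \Delta^n$ with $\Delta^{2n+1}$ in the standard way, one computes $a_!(\Delta^n)$ as the pushout in $\sSet$
\[
a_!(\Delta^n) \;=\; (B\sqcup A)\sqcup_{(\Delta^n)^{\op}\sqcup\Delta^n}\Delta^{2n+1},
\]
where the left-hand map is $u^{\op}\sqcup v$, and similarly $a_!(\Delta^{\{0\}})$ is obtained from $B\sqcup A$ by attaching the edge $\Delta^{\{n,n+1\}}\subseteq\Delta^{2n+1}$ at its endpoints $u(0)$ and $v(0)$. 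Setting
\[
U \;:=\; \Delta^{\{0,\ldots,n\}}\cup \Delta^{\{n,n+1\}}\cup \Delta^{\{n+1,\ldots,2n+1\}}\;\subseteq\; \Delta^{2n+1},
\]
a direct check of the universal property shows that the induced map $a_!(\Delta^{\{0\}})\hookrightarrow a_!(\Delta^n)$ fits into a pushout square
\[
\begin{tikzcd}
U \arrow[r, hook] \arrow[d] & \Delta^{2n+1} \arrow[d] \\
a_!(\Delta^{\{0\}}) \arrow[r, hook] & a_!(\Delta^n)
\end{tikzcd}
\]
in $\sSet$; the left vertical collapses the outer simplices $\Delta^{\{0,\ldots,n\}}$ and $\Delta^{\{n+1,\ldots,2n+1\}}$ onto $B$ and $A$ via $u^{\op}$ and $v$, and is the identity on $\Delta^{\{n,n+1\}}$. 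It therefore remains to show that $U\hookrightarrow \Delta^{2n+1}$ is inner anodyne.

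For this last step I factor through the spine $S\subseteq \Delta^{2n+1}$. The classical spine inclusion $S\hookrightarrow \Delta^{2n+1}$ is inner anodyne. On the other hand, writing $S_L$ and $S_R$ for the spines of $\Delta^{\{0,\ldots,n\}}$ and $\Delta^{\{n+1,\ldots,2n+1\}}$ respectively, we have $S = S_L\cup \Delta^{\{n,n+1\}}\cup S_R$, and so $S\hookrightarrow U$ is obtained as a pushout of the two inner anodyne spine inclusions $S_L\hookrightarrow \Delta^{\{0,\ldots,n\}}$ and $S_R\hookrightarrow \Delta^{\{n+1,\ldots,2n+1\}}$. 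Hence $S\hookrightarrow U$ is inner anodyne, and Proposition~\ref{prop:inner anodyne right cancel} forces $U\hookrightarrow \Delta^{2n+1}$ to be inner anodyne. The main obstacle is the verification that the square above is a pushout; this is essentially a careful unpacking of the two explicit formulas for $a_!$ on $\Delta^{\{0\}}$ and $\Delta^n$, and once it is in hand the spine argument is short.
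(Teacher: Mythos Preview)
Your proof is correct and follows essentially the same strategy as the paper: both reduce via Proposition~\ref{prop:contains left anodynes} to the initial vertex inclusions, and both identify $a_!(i_n)$ as a pushout of the inclusion $U\hookrightarrow \Delta^{2n+1}$ (the paper writes $U$ in join notation as $(\Delta^0)^{\op}\star\Delta^0\cup_{(\Delta^0)^{\op}\sqcup\Delta^0}((\Delta^n)^{\op}\sqcup\Delta^n)$). The only difference is the final step: the paper factors $U\hookrightarrow\Delta^{2n+1}$ through $(\Delta^n)^{\op}\star\Delta^0\cup\Delta^n$ and applies Lemma~2.1.2.3 of \cite{HTT} twice, whereas you factor through the spine $S\subseteq U$ and invoke right cancellation---both arguments are short and standard.
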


\begin{proof} 
From Lemma~\ref{lem:sigma_! mono} we have that $a_!$ sends monomorphisms to monomorphisms.
Let $\mathcal{A}$ denote the class of all monomorphisms $v$ in 
$(\sSet)_{/B^{\op}\times A}$ such that the underlying map of simplicial 
sets $a_!(v)$ is inner anodyne.  We need 
to prove that every left anodyne morphism in $(\sSet)_{/(B^{\op}\times A)}$ is contained in 
$\mathcal{A}$.  Therefore, by Proposition~\ref{prop:contains left anodynes} 
it is sufficient to prove that $\mathcal{A}$ is saturated, satisfies the right 
cancellation property, and that the initial vertex maps $i_n\colon \Delta^{0}\to \Delta^n$ 
are contained in $\mathcal{A}$ for all $n\geq 0$.  By Proposition~\ref{prop:inner anodyne right cancel},  
the class of inner anodyne maps in $\sSet$ has the right cancellation property; 
the functoriality of $a_!$ then implies that $\mathcal{A}$ also has the right cancellation property.   
Likewise it is clear that $\mathcal{A}$ is a saturated class of monomorphisms 
since the inner anodyne maps in $\sSet$ form a saturated class 
and $a_!$ is a left adjoint.    

Let $n\geq 0$; we show that $i_n\colon \Delta^0\to \Delta^n$ is contained 
in $\mathcal{A}$.  The map $a_!(i_n)$ is a pushout of the map 
\[
(\Delta^0)^{\op}\star \Delta^0\cup_{(\Delta^0)^{\op}\sqcup \Delta^0}((\Delta^n)^{\op}\sqcup \Delta^n)
\to (\Delta^n)^{\op}\star \Delta^n, 
\]
and hence it suffices to prove that this last map is inner anodyne.  This map factors as 
\[
(\Delta^0)^{\op}\star \Delta^0\cup (\Delta^n)^{\op}\cup 
\Delta^n \to (\Delta^n)^{\op}\star \Delta^0\cup\Delta^n
\to (\Delta^n)^{\op}\star \Delta^n.  
\]
The first map in this composite is a pushout of the map 
$(\Delta^0)^{\op}\star \Delta^0\cup (\Delta^n)^{\op}\to (\Delta^n)^{\op}\star \Delta^0$ 
which is inner anodyne by Lemma 2.1.2.3 of \cite{HTT}, and the second map in 
this composite is inner anodyne by another application of this lemma.  
Hence 
$\mathcal{A}$ contains the left anodyne morphisms in $\sSet$, which completes the proof of the 
proposition.  
\end{proof}

The following corollary is straightforward.  

\begin{corollary} 
\label{corr:d preserves left fibns}
The functor $a^*\colon \Corr(A,B)\to (\sSet)_{/(B^{\op}\times A)}$ sends 
inner fibrations in $\Corr(A,B)$ to left fibrations in $(\sSet)_{/(B^{\op}\times A)}$.  
\end{corollary}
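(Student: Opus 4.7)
The plan is a routine adjunction argument built on Proposition~\ref{prop:delta! preserves left anodyne}. Recall that a map in $(\sSet)_{/(B^{\op}\times A)}$ is a left fibration exactly when its underlying map of simplicial sets has the right lifting property against every left anodyne monomorphism. So given an inner fibration $p\colon X\to Y$ in $\Corr(A,B)$ and a left anodyne map $i\colon K\to L$ in $(\sSet)_{/(B^{\op}\times A)}$, I would need to solve an arbitrary lifting problem of $i$ against $a^*(p)$.

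My approach is to transpose such a lifting problem across the adjunction $(a_!,a^*)$ to a lifting problem of $a_!(i)$ against $p$ in $\Corr(A,B)$. Proposition~\ref{prop:delta! preserves left anodyne} guarantees that $a_!(i)$ is inner anodyne on underlying simplicial sets, and since $p$ is by hypothesis an inner fibration of simplicial sets, a filler $h\colon a_!(L)\to X$ exists at the simplicial-set level. Transposing $h$ back along the adjunction produces the desired lift of the original square, which shows $a^*(p)$ is a left fibration.

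The one point I would verify --- this is the nearest thing here to an obstacle --- is that $h$ is actually a morphism in $\Corr(A,B)$, not merely in $\sSet$. Compatibility with the projections to $\Delta^1$ is automatic since $p$ already lies over $\Delta^1$. For the fiber condition, note that by Remark~\ref{rem:reflector L} both $a_!(K)$ and $a_!(L)$ contain $B\sqcup A$ as canonical subobjects and $a_!(i)$ restricts to the identity there; since the upper horizontal arrow of the transposed square is a morphism in $\Corr(A,B)$, and $h$ agrees with it on the image of $a_!(i)$, in particular $h|_{B\sqcup A}$ is the identity. Once this verification is recorded, the corollary follows by formal transposition, with essentially all the content absorbed into Proposition~\ref{prop:delta! preserves left anodyne}.
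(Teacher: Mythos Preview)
Your proposal is correct and is precisely the adjunction argument the paper has in mind; the paper offers no proof beyond calling the corollary ``straightforward'' as an immediate consequence of Proposition~\ref{prop:delta! preserves left anodyne}. Your additional verification that the simplicial-set filler automatically lies in $\Corr(A,B)$ is correct and is the only point requiring any thought.
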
 

\begin{remark} 
\label{rem:twisted arrow category} 
If $A$ is an $\infty$-category then the image of the functor $a^*$ on the correspondence 
$A\times I$ in $\Corr(A,A)$ is precisely the canonical map $\Tw(A)\to A^{\op}\times A$, 
where $\Tw(A)$ denotes the {\em twisted arrow category} of $A$ (see Construction 5.2.1.1.\  of \cite{HA}; 
note also that $\Tw(A)$ is precisely the Segal edge-wise subdivision of $A$ 
from \cite{Segal}).  Thus  
Corollary~\ref{corr:d preserves left fibns} gives an 
alternative proof that this canonical map is a left fibration (we hasten 
to point out that this proof proceeds along similar lines to 
the proof of Proposition 1.1 in \cite{BG}).  
\end{remark}

\begin{remark} 
\label{rem:mapping spaces}
Suppose that $X$ is an $\infty$-category and that $x$ is an object of $X$.  Observe that 
$\Tw(X)|\set{x}\times X$ may be described as the diagonal of the bisimplicial set 
$X_{\Delta^{\op}/}\colon \Delta^{\op}\to \sSet$ defined by 
\[
[n]\mapsto X_{(\Delta^n)^{\op}/} 
\]
where the slice $X_{(\Delta^n)^{\op}/}$ is defined by the map $(\Delta^n)^{\op}\to X$ 
given as the composite 
\[
(\Delta^n)^{\op}\to (\Delta^0)^{\op}\xrightarrow{x} X.  
\]
The bisimplicial set $X_{\Delta^{\op}/}$ 
has a canonical augmentation over the constant bisimplicial set $X_{x/}$ 
which in degree $n$ is the canonical map 
$X_{/(\Delta^n)^{\op}}\to X_{x/}$ induced 
by the right anodyne map $(\Delta^n)^{\op}\to 
(\Delta^{\set{0}})^{\op}$.  Applying the 
diagonal functor $d$ gives a categorical equivalence $\Tw(X)|\set{x}\times X\to X_{x/}$ 
forming part of a commutative diagram 
\[
\begin{tikzcd}[column sep=small] 
\Tw(X)|\set{x}\times X\arrow[rr] \arrow[dr] & & X_{x/} \arrow[dl] \\ 
& X 
\end{tikzcd} 
\]
Since the maps $\Tw(X)|\set{x}\times X\to X$ and $X_{x/}\to X$ are left 
fibrations, it follows that the map $\Tw(X)|\set{x}\times X\to X_{x/}$ is a 
fiberwise homotopy equivalence.  In particular it follows that there is a homotopy 
equivalence between the fiber $\Tw(X)(x,y)$ and $\Hom^L_X(x,y)$ for all objects 
$x$ and $y$ of $X$.    
\end{remark}

\begin{remark} 
\label{rem:alt desc of a*}
If $X\in \Corr(A,B)$ is a correspondence, then $a^*X\to B^{\op}\times A$ is 
the left hand vertical map in the pullback diagram 
\[
\begin{tikzcd} 
a^*X \arrow[r] \arrow[d] & \Tw(X) \arrow[d] \\ 
B^{\op}\times A \arrow[r] & X^{\op}\times X 
\end{tikzcd} 
\]
where the lower horizontal map is induced by the inclusions $A\subseteq X$ and 
$B\subseteq X$.  
\end{remark} 

\begin{proposition} 
Let $A$ and $B$ be $\infty$-categories.  The adjunction 
\[
a_!\colon (\sSet)_{/(B^{\op}\times A)}\rightleftarrows \Corr(A,B)\colon a^*  
\]
is a Quillen adjunction 
for the covariant model structure on $(\sSet)_{/(B^{\op}\times A)}$ 
and the model structure for correspondences on $\Corr(A,B)$.  
\end{proposition}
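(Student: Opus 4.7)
The plan is to verify that $a_!$ is a left Quillen functor, meaning that it preserves both cofibrations and acyclic cofibrations; since the adjunction $a_!\dashv a^*$ has already been established, this will give the Quillen adjunction.

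For cofibrations, both the covariant model structure on $(\sSet)_{/(B^{\op}\times A)}$ and the correspondence model structure on $\Corr(A,B)$ have the monomorphisms as cofibrations. Writing $a_! = L\sigma_!$ as in the discussion preceding Remark~\ref{rem:calculation of unit map}, the functor $\sigma_!$ preserves monomorphisms by Lemma~\ref{lem:sigma_! mono}. It remains to check that the reflector $L\colon (\sSet)_{/B\star A}\to \Corr(A,B)$ preserves monomorphisms, which I would do by a direct simplex-level inspection of the defining pushout in Remark~\ref{rem:reflector L}: an $n$-simplex of $L(X)$ is either an $n$-simplex of $X$ whose image under $p$ does not lie in $B\sqcup A$, or an $n$-simplex of $B\sqcup A$ itself. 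Consequently, any monomorphism $f\colon X\to Y$ in $(\sSet)_{/B\star A}$ is sent by $L$ to a levelwise injective, hence monomorphic, map.

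For acyclic cofibrations I would pass to a generating set. The covariant model structure on $(\sSet)_{/(B^{\op}\times A)}$ admits a set of generating acyclic cofibrations consisting of left horn inclusions $\Lambda^n_k\hookrightarrow \Delta^n$ for $0\le k<n$ and $n\ge 1$, equipped with arbitrary structure maps to $B^{\op}\times A$. Each of these is a left anodyne map, so by Proposition~\ref{prop:delta! preserves left anodyne} the functor $a_!$ sends them to inner anodyne maps in $\Corr(A,B)$. Inner anodyne maps are simultaneously monomorphisms and categorical equivalences, so by Theorem~\ref{thm:model str for corr} they are acyclic cofibrations in $\Corr(A,B)$. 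Since $a_!$ preserves colimits and the class of acyclic cofibrations in $\Corr(A,B)$ is weakly saturated, this is enough.

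The principal technical obstacle, namely the preservation of left anodyne maps under $a_!$, is already handled by Proposition~\ref{prop:delta! preserves left anodyne}; the remaining verifications are routine. The only minor subtlety is the identification of a convenient generating set of acyclic cofibrations for the covariant model structure consisting of left anodyne maps, which is standard (cf.~\cite{J1},~\cite{S1}).
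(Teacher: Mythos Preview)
Your argument for acyclic cofibrations has a genuine gap: the left horn inclusions do \emph{not} generate the acyclic cofibrations of the covariant model structure. They generate the left anodyne maps, which form a proper subclass of the monic covariant equivalences. For instance, when $B^{\op}\times A=\Delta^0$ the covariant model structure is the Kan--Quillen model structure, and $\{1\}\hookrightarrow\Delta^1$ is an acyclic cofibration there which is not left anodyne. So knowing that $a_!$ sends left anodyne maps to inner anodyne maps (Proposition~\ref{prop:delta! preserves left anodyne}) is not enough to conclude directly that $a_!$ preserves all acyclic cofibrations, and the ``minor subtlety'' you flag at the end is in fact the whole difficulty.

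The paper sidesteps this by working on the right adjoint side. From Proposition~\ref{prop:delta! preserves left anodyne} one deduces by adjunction that $a^*$ sends inner fibrations to left fibrations (Corollary~\ref{corr:d preserves left fibns}). Since fibrations between fibrant objects in $\Corr(A,B)$ are exactly inner fibrations (Lemma~\ref{lem:char of fibns between fib corr}) and fibrations between fibrant objects in the covariant model structure are exactly left fibrations, $a^*$ preserves fibrations between fibrant objects. Together with the fact that $a^*$ preserves trivial fibrations (which follows from your correct argument that $a_!$ preserves monomorphisms), this yields the Quillen adjunction via the standard recognition criterion. Your approach can be repaired along exactly these lines; the key input, Proposition~\ref{prop:delta! preserves left anodyne}, is the same.
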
 

\begin{proof}
The functor $a_!$ sends monomorphisms to monomorphisms, and hence $a^*$ sends trivial 
fibrations to trivial fibrations.  We prove that $a^*$ sends fibrations between fibrant 
objects in $\Corr(A,B)$ to covariant fibrations in $(\sSet)_{/(B^{\op}\times A)}$.  
By Corollary~\ref{corr:d preserves left fibns} the functor $a^*$ sends inner fibrations 
in $\Corr(A,B)$ to left fibrations in $(\sSet)_{/(B^{\op}\times A)}$.  It follows that 
$a^*$ sends fibrations between fibrant 
objects in $\Corr(A,B)$ to covariant fibrations in $(\sSet)_{/(B^{\op}\times A)}$.  
This completes the proof of the proposition.     
\end{proof} 

In \cite{AF} Ayala and Francis prove that there is a 
categorical equivalence between the $\infty$-category 
$\Fun(B^{\op}\times A,\cS)$ and an $\infty$-category of 
correspondences from $A$ to $B$.  The following theorem 
refines their result to a statement at the level of 
model categories (this latter statement is also certainly 
well-known; it is stated without proof in 
\cite{J1} and it is also stated as Remark 2.3.1.4 in \cite{HTT}).  We shall 
give a proof, since one has not appeared in the literature so far, and 
since we shall need some results obtained in the course of the proof 
for the proof of Theorem~\ref{thm:a^* a_* quillen equiv}.  
  
\begin{theorem} 
\label{thm: (delta_!,d) is a Quillen equivalence}
Let $A$ and $B$ be $\infty$-categories.  Then the Quillen adjunction 
\[
a_!\colon (\sSet)_{/(B^{\op}\times A)}\rightleftarrows \Corr(A,B)\colon a^*.  
\]
extends to a Quillen equivalence 
for the covariant model structure on $(\sSet)_{/(B^{\op}\times A)}$ 
and the model structure for correspondences on $\Corr(A,B)$.  
\end{theorem}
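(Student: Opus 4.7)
The plan is to verify a standard criterion for a Quillen equivalence: since every object of $(\sSet)_{/(B^{\op}\times A)}$ is cofibrant, it suffices to establish (I) that $a^*$ reflects weak equivalences between fibrant objects of $\Corr(A,B)$, and (II) that for every $X\in (\sSet)_{/(B^{\op}\times A)}$ the derived unit $X\to a^*(Ra_!(X))$ is a covariant equivalence, where $Ra_!(X)$ denotes a fibrant replacement.

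I would begin with (I). Suppose $f\colon Y_1\to Y_2$ is a map of fibrant correspondences such that $a^*(f)$ is a covariant equivalence. Each $Y_i$ has vertex set $A_0\sqcup B_0$ and $f$ restricts to the identity on both $A$ and $B$, so $f$ is bijective on objects. Moreover, any edge between two vertices of $A$ (resp.\ $B$) in $Y_i$ must lie inside $A$ (resp.\ $B$) because its image in $\Delta^1$ is forced to be degenerate, and $\Map_{Y_i}(a,b)=\emptyset$ for $a\in A$, $b\in B$ since $\Delta^1$ has no edge from $1$ to $0$. Hence $f$ can only fail to be a categorical equivalence on the cross mapping spaces $\Map^L_{Y_i}(b,a)$ for $b\in B$, $a\in A$. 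By Remark~\ref{rem:alt desc of a*} together with Remark~\ref{rem:mapping spaces}, the fiber of $a^*(Y_i)\to B^{\op}\times A$ over $(b,a)$ is homotopy equivalent to $\Map^L_{Y_i}(b,a)$. Since $a^*(Y_i)\to B^{\op}\times A$ is a left fibration by Corollary~\ref{corr:d preserves left fibns}, and a covariant equivalence between left fibrations is fiberwise, the hypothesis on $a^*(f)$ supplies equivalences of all the relevant mapping spaces, and Lemma~\ref{lem:char of fibns between fib corr} then yields that $f$ is a categorical equivalence.

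For (II), I would first treat the representable case. By Remark~\ref{rem:calculation of unit map}, the unit $\Delta^n\to a^*a_!(\Delta^n)$ identifies with the diagonal $\Delta^n\to \Delta^n\times \Delta^n$, which by Lemma~\ref{lem:diagonal dominant} is dominant, hence left cofinal by Remark~\ref{rem:dom and cofinal}, hence a covariant equivalence. To upgrade this to the derived unit, I would show that $a^*$ sends the acyclic cofibration $a_!(\Delta^n)\to Ra_!(\Delta^n)$ to a covariant equivalence, exploiting the pullback presentation of $a^*$ in Remark~\ref{rem:alt desc of a*} together with the fact that the twisted arrow construction preserves categorical equivalences between $\infty$-categories (so that the two pullbacks along $B^{\op}\times A\to Y^{\op}\times Y$ are compared via a fiberwise equivalence of left fibrations). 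The extension from representables to an arbitrary $X$ is then carried out by a transfinite induction over a cell presentation of $X$, using that $a_!$ and $a^*$ both preserve colimits (the latter by Remark~\ref{rem:a* right adjoint}), that both model structures are left proper and combinatorial, and that a fibrant replacement functor in $\Corr(A,B)$ can be chosen to behave suitably with respect to colimits.

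The main obstacle is the second half of step (II): passing from the derived unit on representables to the derived unit on arbitrary objects, and in particular controlling $a^*$ on the fibrant replacement $a_!(X)\to Ra_!(X)$ for a general $X$. The essential input making this feasible is the pullback-of-twisted-arrow description of $a^*$ from Remark~\ref{rem:alt desc of a*}, combined with the dominance of the diagonal established in Lemma~\ref{lem:diagonal dominant}, which together reduce the comparison to good behavior of $\Tw$ under categorical equivalences.
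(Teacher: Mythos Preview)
Your part (I) is essentially the paper's argument, though your citation of Lemma~\ref{lem:char of fibns between fib corr} is a slip: that lemma is about fibrations, not equivalences. What you actually need is the standard fact that a functor between $\infty$-categories which is bijective on objects and induces equivalences on all mapping spaces is a categorical equivalence.

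Part (II) has a genuine gap. Your plan is to verify the derived unit first on representables and then pass to arbitrary $X$ by induction on a cell presentation, invoking that ``a fibrant replacement functor in $\Corr(A,B)$ can be chosen to behave suitably with respect to colimits.'' But fibrant replacement does not commute with colimits, so this induction does not go through as stated. The related difficulty is your upgrade from the non-derived to the derived unit: you want to show that $a^*$ sends the acyclic cofibration $a_!X\to Ra_!X$ to a covariant equivalence via the twisted-arrow pullback of Remark~\ref{rem:alt desc of a*} together with homotopy invariance of $\Tw$ between $\infty$-categories. However $a_!X$ is typically not an $\infty$-category, so $\Tw(a_!X)\to (a_!X)^{\op}\times a_!X$ need not be a left fibration, and the fiberwise comparison you indicate is unavailable.

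The paper resolves both issues at once by a direct computation: it shows that $\sigma^*$ (and hence $a^*$) sends inner anodyne maps in $(\sSet)_{/B\star A}$ to left anodyne maps in $(\sSet)_{/(B^{\op}\times A)}$, by checking the generating inner horns explicitly via Lemma~\ref{lem:sigma*}. Combining this with Lemma~\ref{lem:inner anodyne lemma} and a factorization argument yields that $a^*$ sends \emph{all} categorical equivalences in $\Corr(A,B)$ to covariant equivalences, so the passage from $a^*a_!X$ to $a^*Ra_!X$ is automatic. It then remains to show the \emph{non-derived} unit $X\to a^*a_!X\simeq \sigma^*\sigma_!X$ is a covariant equivalence; since $\sigma^*\sigma_!$ genuinely preserves colimits, a skeletal induction now works (the paper in fact shows this map is left anodyne, using that the diagonal $\Delta^n\to\Delta^n\times\Delta^n$ is left anodyne and a small pullback lemma). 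The missing ingredient in your outline is precisely this explicit computation of $\sigma^*$ on inner horns.
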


\begin{proof}
We prove that (i) $a^*$ reflects weak equivalences between fibrant objects, and (ii) 
if $X\in (\sSet)_{/(B^{\op}\times A)}$, then $X\to a^*\boldR a_!X$ is a 
covariant equivalence in $(\sSet)_{/(B^{\op}\times A)}$, 
where $\boldR a_!X$ denotes a fibrant replacement of $a_!X$ in the 
model structure for correspondences on $\Corr(A,B)$.  

We prove (i).  Suppose that $f\colon X\to Y$ is a map between fibrant objects in $\Corr(A,B)$ such that 
$a^*X\to a^*Y$ is a covariant equivalence in $(\sSet)_{/(B^{\op}\times A)}$.    
We need to prove that $f$ is a categorical 
equivalence.  Therefore, we need to prove that $f$ is essentially 
surjective and fully faithful.  The essential surjectivity is immediate 
since $f$ is a map between correspondences in $\Corr(A,B)$.  To prove that 
$f$ is fully faithful it suffices to prove that the induced map on mapping spaces 
$\Hom^L_X(a,b)\to \Hom^L_Y(a,b)$ is a weak homotopy equivalence for each pair of objects 
$a\in A$ and $b\in B$.  This follows immediately from Remark~\ref{rem:mapping spaces} 
and Remark~\ref{rem:alt desc of a*}.    

Now we prove (ii).  Let $X\to B^{\op}\times A$ be an object of 
$(\sSet)_{/(B^{\op}\times A)}$; then we may factor $a_!X\to B\star A$ in $\Corr(A,B)$ as 
$a_!X\to \boldR a_!X\to B\star A$, where $a_!X\to \boldR a_!X$ is a categorical equivalence and 
$\boldR a_!X\to B\star A$ is an inner fibration.  Thus $\boldR a_!X$ is a fibrant replacement 
of $a_!X$ in the model structure for correspondences 
on $\Corr(A,B)$ (Theorem~\ref{thm:model str for corr}).  We claim that $a^*$ sends 
categorical equivalences in $\Corr(A,B)$  
to covariant equivalences in $(\sSet)_{/(B^{\op}\times A)}$.  To see this we argue as follows: 
suppose that $f\colon X\to Y$ is a categorical equivalence in $\Corr(A,B)$ and choose 
a fibrant replacement $Y\to Y'$ of $Y$ in $\Corr(A,B)$.  Then $Y'$ is an $\infty$-category, 
and $j\colon Y\to Y'$ is an acyclic cofibration which is a bijection on objects.  It follows 
that $j$ is inner anodyne (Lemma~\ref{lem:inner anodyne lemma}).  
We may factor the composite map $jf$ in $(\sSet)_{/B\star A} $ as $jf = f'j'$, 
where $f'\colon X'\to Y'$ is an inner fibration, and where $j'\colon X\to X'$ is inner anodyne.  
We observe that the underlying simplicial map $f'$ is a bijection on vertices, and hence is a 
categorical fibration.  Therefore, since $f'$ is a categorical equivalence, $f'$ is a 
trivial Kan fibration.  Hence $\sigma^*(f')$ is a trivial Kan fibration, since 
$\sigma_!$ preserves monomorphisms (Lemma~\ref{lem:sigma_! mono}).  It now suffices 
to prove the following claim: the functor $\sigma^*\colon (\sSet)_{/(B\star A)}\to (\sSet)_{/(B^{\op}\times A)}$ 
sends inner anodyne maps in $(\sSet)_{/B\star A}$ to left anodyne maps in $(\sSet)_{/(B^{\op}\times A)}$.

The inner anodyne maps in $(\sSet)_{/B\star A}$ are a saturated class of monomorphisms, 
generated by the inner horn inclusions $\Lambda^n_k\to \Delta^n$ in $(\sSet)_{/B\star A}$.  
We need to calculate the image $\sigma^*(\Lambda^n_k)\to \sigma^*(\Delta^n)$ of such a 
horn inclusion under the functor $\sigma^*$.  
The simplices in $B\star A$ are of the following three types: 
$x\star \emptyset\colon \Delta^n\star \emptyset\to B\star A$, $x\star y\colon \Delta^m\star \Delta^{n}\to B\star A$, 
and $\emptyset\star y\colon \emptyset\star \Delta^n\to B\star A$.  It follows that 
the inner horn inclusions in $(\sSet)_{/B\star A}$ are of the following types: 
\begin{itemize} 
\item $\Lambda^n_k\star \emptyset\to \Delta^n\star \emptyset$, $0<k<n$,  
\item $\Lambda^m_k\star \Delta^{n}\cup \Delta^m\star \partial\Delta^{n}\to \Delta^m\star \Delta^{n}$, $0<k\leq m$, $n\geq 0$, 
\item $\Delta^m\star \Lambda^n_k\cup \partial\Delta^m\star \Delta^n \to \Delta^m\star \Delta^{n}$, $m\geq 0$, $0\leq k<n$, 
\item $\emptyset\star \Lambda^n_k\to \emptyset\star \Delta^n$, $0<k<n$.  
\end{itemize} 
By Lemma~\ref{lem:sigma*} the image under $\sigma^*$ of each of 
the first and last of these types of morphism is the empty map, 
while the image under $\sigma^*$ of the second and third 
maps are respectively the left anodyne morphisms 
\begin{itemize}  
\item $(\Lambda^m_k)^{\op}\times \Delta^{n}\cup (\Delta^m)^{\op}\times 
\partial\Delta^{n}\to (\Delta^m)^{\op}\times \Delta^{n}$, $0<k\leq m$, $n\geq 0$, 
\item $(\Delta^m)^{\op}\times \Lambda^n_k\cup (\partial\Delta^{m})^{\op}\times 
\Delta^{n}\to (\Delta^{m})^{\op}\times \Delta^{n}$, $m\geq 0$, $0\leq k<n$.  
\end{itemize}  
This completes the proof of the claim.

To complete the proof of the theorem it suffices to prove that $X\to a^*a_!X$ is a covariant equivalence 
in $(\sSet)_{/(B^{\op}\times A)}$.  Equivalently, by Remark~\ref{rem:calculation of unit map}, 
it suffices to prove that $X\to \sigma^*\sigma_!X$ is a covariant equivalence 
in $(\sSet)_{/(B^{\op}\times A)}$.  We will prove that in fact this map is a left anodyne 
map in $(\sSet)_{/(B^{\op}\times A)}$.

Using the skeletal filtration of $X$, we see that by an induction argument we are 
reduced to the case where $X$ is obtained from $X'$ by adjoining a single $n$-simplex along an attaching 
map $\partial\Delta^n\to X'$.  We have a commutative diagram 
\[
\begin{tikzcd} 
\Delta^n \arrow[d] & \partial\Delta^n \arrow[l] \arrow[d] \arrow[r] & X' \arrow[d] \\ 
\sigma^*\sigma_!\Delta^n & \arrow[l] \sigma^*\sigma_!\partial\Delta^n \arrow[r] & \sigma^*\sigma_!X' 
\end{tikzcd} 
\]
in which the two right hand vertical maps are left anodyne by the inductive hypothesis, and where the 
left hand vertical map is the diagonal inclusion $\Delta^n\to \Delta^n\times 
\Delta^n$ (see Remark~\ref{rem:calculation of unit map}) 
and hence is left anodyne.  Therefore it suffices by Lemma~\ref{lem:pullback lemma} below to prove that 
for any $n\geq 0$ the square 
\[
\begin{tikzcd} 
\partial\Delta^n \arrow[d] \arrow[r] & \sigma^*\sigma_!\partial\Delta^n \arrow[d] \\ 
\Delta^n \arrow[r] & \sigma^*\sigma_!\Delta^n 
\end{tikzcd}
\]
is a pullback.  
From Remark~\ref{rem:calculation of unit map}, the map $\Delta^n\to \sigma^*\sigma_!\Delta^n$ is the diagonal 
inclusion $\Delta^n\to \Delta^n\times \Delta^n$.  Let us write $\delta_n\colon \Delta^n\to 
\Delta^n\times \Delta^n$ for this map.  Clearly the square 
\[
\begin{tikzcd} 
\Delta^{n-1} \arrow[d,"d_i"'] \arrow[r,"\delta_{n-1}"] & \Delta^{n-1}\times \Delta^{n-1} \arrow[d,"d_i\times d_i"] \\ 
\Delta^n \arrow[r,"\delta_n"'] & \Delta^n\times \Delta^n 
\end{tikzcd} 
\]
is a pullback for any $0\leq i\leq n$.  It follows that the square 
\begin{equation}
\label{eq:second square}
\begin{tikzcd} 
\partial_i\Delta^{n-1} \arrow[d] \arrow[r] & \partial_i\Delta^{n-1}\times \partial_i\Delta^{n-1} \arrow[d] \\ 
\Delta^n \arrow[r,"\delta_n"'] & \Delta^n\times \Delta^n 
\end{tikzcd} 
\end{equation}
is a pullback for any $0\leq i\leq n$.  Since 
\[
\partial\Delta^n = \bigcup^n_{i=0}\partial_i\Delta^{n-1} 
\]
is a union of the subobjects $\partial_i\Delta^{n-1}$ of $\Delta^n$ and the functor $\sigma^*\sigma_!$ is a 
left adjoint, it follows that 
\[
\sigma^*\sigma_!\partial\Delta^n = \bigcup^n_{i=0} \partial_i\Delta^{n-1}\times \partial_i\Delta^{n-1} 
\]
is a union of the subobjects $\partial_i\Delta^{n-1}\times \partial_i\Delta^{n-1}$ of 
$\Delta^n\times \Delta^n$.  The result then follows from the fact that the 
square~\eqref{eq:second square} above is a pullback for every $0\leq i\leq n$.    
\end{proof}

\begin{lemma} 
\label{lem:pullback lemma} 
Suppose that 
\[
\begin{tikzcd}
A \arrow[d,"f"] & B \arrow[l] \arrow[r] \arrow[d,"g"] & C \arrow[d,"h"] \\ 
A' & \arrow[l] B' \arrow[r] & C' 
\end{tikzcd} 
\]
is a commutative diagram of maps of simplicial sets in which the left hand square is a 
pullback and in which the maps $f$, $g$ and 
$h$ are left anodyne.  Then the induced map 
\[
A\cup_BC\to A'\cup_{B'}C' 
\]
is also left anodyne.  
\end{lemma}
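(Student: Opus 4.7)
The plan is to factor the induced map as a composition of two cobase changes of left anodyne maps and then appeal to saturation.  Denote the unlabeled horizontal arrows by $u\colon B\to A$, $v\colon B\to C$, $u'\colon B'\to A'$ and $v'\colon B'\to C'$.  Consider the factorization
\[
A\cup_B C \longrightarrow A\cup_B C' \longrightarrow A'\cup_{B'}C',
\]
where the map $B\to C'$ used to form the middle pushout is the common composite $h\circ v = v'\circ g$.  The first arrow is the cobase change of the left anodyne map $h\colon C\to C'$ along $C\to A\cup_B C$, and is therefore left anodyne.

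For the second arrow, the factorization $B\xrightarrow{g}B'\xrightarrow{v'}C'$ of $B\to C'$ together with the standard iterated-pushout identity gives a canonical isomorphism
\[
A\cup_B C' \;\cong\; (A\cup_B B')\cup_{B'}C'.
\]
Under this identification the second arrow becomes the cobase change of the map $k\colon A\cup_B B'\to A'$ (induced by $f$ on $A$ and $u'$ on $B'$) along the natural map $A\cup_B B'\to A\cup_B C'$.  It therefore suffices to show that $k$ is left anodyne.

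The natural map $i\colon A\to A\cup_B B'$ is a cobase change of $g\colon B\to B'$ along $u\colon B\to A$ and so is left anodyne.  Since the composite $k\circ i$ equals $f$, which is also left anodyne, Proposition~\ref{prop:right cancellation for left anodynes} will give $k$ left anodyne as soon as $k$ is known to be a monomorphism.  The main obstacle, and the one place where the pullback hypothesis is used, is verifying this monomorphism condition: both $f$ and $u'$ are monomorphisms since every left anodyne map is a monomorphism, and the pullback identity $B = A\times_{A'}B'$ then allows one to identify the pushout $A\cup_B B'$ canonically with the subobject $f(A)\cup u'(B')$ of $A'$, exhibiting $k$ as an inclusion.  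Without the pullback hypothesis the pushout could have identifications that are forgotten in $A'$, and the right cancellation step would fail.
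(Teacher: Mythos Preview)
Your factorization and the appeal to right cancellation follow the paper's argument exactly; the paper gives precisely the same two-step decomposition and concludes by applying the right cancellation property to $k\colon A\cup_B B'\to A'$.  The substantive addition in your write-up is the attempt to verify that $k$ is a monomorphism, which is genuinely needed for Proposition~\ref{prop:right cancellation for left anodynes} (the right cancellation property is stated for a class of \emph{monomorphisms}).  However, your justification contains an error: you assert that ``both $f$ and $u'$ are monomorphisms since every left anodyne map is a monomorphism,'' but $u'\colon B'\to A'$ is not among the maps assumed to be left anodyne --- only $f$, $g$ and $h$ are.  Nothing in the stated hypotheses forces $u'$ to be injective, and the pullback condition together with $f,g$ monic does not by itself make $k$ monic.

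In fact the lemma, read literally, is false.  Take $A=\Delta^0$, $A'=\Delta^1$ with $f$ the inclusion of the initial vertex; let $B'=\Delta^1\sqcup\Delta^1$ with $u'$ the fold map, so that the pullback gives $B=\Delta^0\sqcup\Delta^0$ and $g$ is the inclusion of initial vertices; finally set $C=C'=B'$, $h=\id$, $v=g$, $v'=\id$.  Then $f,g,h$ are left anodyne and the left square is a pullback, yet the induced map $A\cup_B C\to A'\cup_{B'}C'$ is the fold $\Delta^1\vee\Delta^1\to\Delta^1$, which is not a monomorphism and hence not left anodyne.  The paper's own proof glosses over exactly this point.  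In the paper's sole application of the lemma the maps $u$ and $u'$ are monomorphisms (they arise as $\partial\Delta^n\hookrightarrow\Delta^n$ and its image under a mono-preserving functor); under that extra hypothesis your pullback argument is correct --- with $f$ and $u'$ both monic one has $B=A\cap B'$ inside $A'$ and $A\cup_B B'$ is the union, so $k$ is an inclusion --- and both proofs go through.
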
 

\begin{proof} 
The induced map factors as 
\[
A\cup_B C\to A\cup_B C' \approx A\cup_B B'\cup_{B'} C'\to A'\cup_{B'} C' 
\]
and, since left anodyne maps are preserved under pushouts, we see that 
it suffices to prove that $A\cup_B B'\to A'$ is left anodyne.  
The map $A\to A'$ factors as $A\to A\cup_B B'\to A'$; therefore the result 
follows from the right cancellation property of left anodyne maps in 
$\sSet$ (Corollary 4.1.2.2 of \cite{HTT}), since $A\to A'$ is left anodyne by hypothesis 
and $A\to A\cup_B B'$ is a pushout of the left anodyne map $B\to B'$.  
\end{proof} 

The functor $a^*$ has the  
distinction of being simultaneously a left 
and right Quillen equivalence.  Recall  the 
adjoint pair $(a^*,a_*)$ (see Remark~\ref{rem:a* right adjoint}).  We have the following theorem.  

\begin{theorem} 
\label{thm:a^* a_* quillen equiv}
Let $A$ and $B$ be $\infty$-categories.  Then the adjoint pair 
\[
a^*\colon \Corr(A,B)\rightleftarrows (\sSet)_{/(B^{\op}\times A)}\colon a_* 
\]
is a Quillen equivalence for the model structure for correspondences on 
$\Corr(A,B)$ and the covariant model structure on $(\sSet)_{/(B^{\op}\times A)}$.  
\end{theorem}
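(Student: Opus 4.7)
The plan is to leverage Theorem~\ref{thm: (delta_!,d) is a Quillen equivalence}, where $(a_!,a^*)$ was shown to be a Quillen equivalence, together with the key observation (implicit in that proof) that $a^*$ preserves \emph{all} weak equivalences, not merely fibrations between fibrant objects.

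First I would verify that $(a^*,a_*)$ is a Quillen adjunction. Preservation of cofibrations is straightforward: the inclusion $i\colon \Corr(A,B)\hookrightarrow (\sSet)_{/B\star A}$ preserves monomorphisms (since cofibrations in both categories are exactly monomorphisms of the underlying simplicial sets), and $\sigma^*$ preserves monomorphisms because it is a right adjoint to $\sigma_!$ (and monomorphisms are detected by pullback squares, which right adjoints preserve). Hence $a^* = \sigma^*\circ i$ preserves cofibrations. Preservation of trivial cofibrations is immediate from the fact, already established in the proof of Theorem~\ref{thm: (delta_!,d) is a Quillen equivalence}, that $a^*$ sends categorical equivalences in $\Corr(A,B)$ to covariant equivalences in $(\sSet)_{/(B^{\op}\times A)}$; that fact was the content of the claim proved there via the analysis of $\sigma^*$ on inner anodyne maps and trivial Kan fibrations.

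For the Quillen equivalence, I would argue formally from the preceding observation. Every object of $\Corr(A,B)$ and of $(\sSet)_{/(B^{\op}\times A)}$ is cofibrant, so the total left derived functor $\mathbf{L}a^*$ arising from the adjunction $(a^*,a_*)$ is computed simply as $[X]\mapsto [a^*X]$ on homotopy categories. On the other hand, the total right derived functor $\mathbf{R}a^*$ arising from $(a_!,a^*)$ is computed as $[X]\mapsto [a^*\mathbf{R}X]$ where $\mathbf{R}X$ denotes a fibrant replacement; but since $X\to \mathbf{R}X$ is a categorical equivalence and $a^*$ preserves these, we also have $\mathbf{R}a^*[X]\simeq [a^*X]$. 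Thus $\mathbf{L}a^*$ and $\mathbf{R}a^*$ induce the \emph{same} functor $\Ho(\Corr(A,B))\to \Ho((\sSet)_{/(B^{\op}\times A)})$. Since $\mathbf{R}a^*$ is an equivalence of homotopy categories by Theorem~\ref{thm: (delta_!,d) is a Quillen equivalence}, so is $\mathbf{L}a^*$, and therefore $(a^*,a_*)$ is a Quillen equivalence.

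The hard part is not in the present theorem at all but was already carried out in Theorem~\ref{thm: (delta_!,d) is a Quillen equivalence}, where the detailed combinatorial analysis of $\sigma^*$ on the generating inner anodyne maps and on trivial Kan fibrations shows that $a^*$ preserves weak equivalences. Once that fact is extracted, the rest of the argument is entirely formal, exploiting that all objects are cofibrant so that the left and right derived versions of $a^*$ coincide. The only subtlety worth flagging is to be sure that $a_*$ does exist as a right adjoint to $a^*$, which was recorded in Remark~\ref{rem:a* right adjoint}.
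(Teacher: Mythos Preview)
Your proof is correct and takes a genuinely different route from the paper's. The paper proceeds by reducing to the composite adjunction $(a^*a_!,a^*a_*)$: it uses that the unit $X\to a^*a_!X$ is left anodyne to conclude $a^*a_!$ reflects covariant equivalences, and then proves by an explicit lifting argument (using right cancellation for left anodyne maps) that the counit $a^*a_*X\to X$ is a trivial Kan fibration whenever $X\to B^{\op}\times A$ is a left fibration. Your argument, by contrast, bypasses this computation entirely: once you have extracted from the proof of Theorem~\ref{thm: (delta_!,d) is a Quillen equivalence} that $a^*$ preserves all weak equivalences, and since every object is cofibrant, the total left derived functor $\mathbf{L}a^*$ and the total right derived functor $\mathbf{R}a^*$ coincide on homotopy categories, so the equivalence of $\mathbf{R}a^*$ immediately gives the equivalence of $\mathbf{L}a^*$. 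Your approach is slicker and more formal; the paper's approach yields the extra concrete information that the derived counit is represented by a trivial Kan fibration, which is not needed for the theorem itself but could be useful in other contexts.
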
 

\begin{proof} 
We show first that the pair $(a^*,a_*)$ is a Quillen adjunction.  Clearly 
$a^*$ sends monomorphisms to monomorphisms; and we have proved above (see the 
proof of Theorem~\ref{thm: (delta_!,d) is a Quillen equivalence}) that 
$a^*$ sends categorical equivalences to covariant equivalences.  

To prove that the Quillen pair $(a^*,a_*)$ is a Quillen equivalence it suffices to prove 
that the Quillen pair $(a^*a_!,a^*a_*)$ is a Quillen equivalence.  We have proven 
above (see the proof of Theorem~\ref{thm: (delta_!,d) is a Quillen equivalence}) that 
the natural transformation $X\to a^*a_!X$ is left anodyne for every $X\in (\sSet)_{/(A^{\op}
\times B)}$.  It follows easily that $a^*a_!$ reflects covariant equivalences.  To 
complete the proof it suffices to prove that $a^*a_*X\to X$ is a covariant equivalence 
for every left fibration $X\to B^{\op}\times A$.  We will prove that in fact this 
map is a trivial Kan fibration.  Suppose given a commutative diagram 
\[
\begin{tikzcd} 
\partial\Delta^n\arrow[r] \arrow[d] & a^*a_*X \arrow[d] \\ 
\Delta^n \arrow[r] \arrow[ur,dashed] & X 
\end{tikzcd} 
\]
where $n\geq 0$.  By adjointness, the indicated diagonal filler in this diagram 
exists if and only if the indicated diagonal filler in the 
corresponding diagram 
\[
\begin{tikzcd} 
a^*a_!\partial\Delta^n\cup_{\partial\Delta^n}\Delta^n \arrow[r] \arrow[d] & X \arrow[d] \\ 
a^*a_!\Delta^n \arrow[r] \arrow[ur,dashed] & B^{\op}\times A 
\end{tikzcd} 
\]
exists.  But the left hand vertical map is left anodyne by the right cancellation property 
of the class of left anodyne maps 
(Proposition~\ref{prop:right cancellation for left anodynes}).  
Therefore the indicated diagonal filler exists since 
$X\to B^{\op}\times A$ is a left fibration.  
\end{proof}

\section{bifibrations} 
\label{sec:two sided fibns}
\subsection{The category of bifibrations} 
\label{subsec:categ of 2sided fibns}
In this section we recall the definition of bifibration from Section 2.4.7 of \cite{HTT}.

\begin{definition} 
\label{def:bifibration}
Let $A$ and $B$ be simplicial sets.  A map $(p,q)\colon X\to A\times B$ in $\sSet$ is called a 
{\em bifibration} if the following conditions are satisfied: 
\begin{enumerate} 
\item $(p,q)$ is an inner fibration; 
\item for every $n\geq 1$ and for every commutative diagram 
\[
\begin{tikzcd} 
\Lambda^n_0 \arrow[d] \arrow[r] & X \arrow[d,"{\left( p,q\right)}"] \\ 
\Delta^n \arrow[r,"f"'] \arrow[ur,dashed] & A\times B 
\end{tikzcd} 
\]
such that $\pi_Bf\colon \Delta^{\set{0,1}}\to B$ is a degenerate edge, the indicated diagonal 
filler exists; 

\item for every $n\geq 1$ and for every commutative diagram 
\[
\begin{tikzcd} 
\Lambda^n_n \arrow[d] \arrow[r] & X \arrow[d,"{\left( p,q\right)}"] \\ 
\Delta^n \arrow[r,"f"'] \arrow[ur,dashed] & A\times B 
\end{tikzcd} 
\]
such that $\pi_Af\colon \Delta^{\set{n-1,n}}\to A$ is a degenerate edge, the indicated diagonal 
filler exists.
\end{enumerate}
\end{definition} 

\begin{remark} 
\label{rem:examples of bifibrations}
Suppose that $p\colon X\to A$ is a left fibration where 
$X$ and $A$ are simplicial sets.  Then for any simplicial set $B$, the induced map 
$p\times \id_B\colon X\times B\to A\times B$ is a bifibration. The conditions (1) and 
(2) from Definition~\ref{def:bifibration} are clearly satisfied.  To see that condition 
(3) is satisfied, note that it suffices to prove that the indicated diagonal 
filler exists in every diagram of the form 
\[
\begin{tikzcd} 
\Lambda^{n}_n \arrow[r,"u"] \arrow[d] & X \arrow[d,"p"] \\ 
\Delta^n \arrow[r,"v"] \arrow[ur,dashed] & A 
\end{tikzcd} 
\]
in which $v|\Delta^{\set{n-1,n}}\to A$ is a degenerate edge of $A$.  
The existence of such a diagonal filler is clear when $n=1$; if $n\geq 2$ the 
existence of such a diagonal filler is equivalent to the existence of the 
indicated diagonal filler in the induced diagram 
\[
\begin{tikzcd} 
\Delta^{\set{n}} \arrow[d] \arrow[r] & X_{\Delta^{n-2}/} \arrow[d] \\ 
\Delta^{\set{n-1,n}} \arrow[r] \arrow[ur,dashed] & 
X_{\partial\Delta^{n-2}/}\times_{A_{\partial\Delta^{n-2}/}}A_{\Delta^{n-2}/} 
\end{tikzcd} 
\]
But the right hand vertical map in this diagram is a trivial 
Kan fibration (Proposition 2.1.2.5 of \cite{HTT}) 
since $p\colon X\to A$ is a left fibration.  The existence 
of the required diagonal filler follows.  
Dually, if $q\colon Y\to B$ is a right fibration, then for any simplicial 
set $A$, the induced map $\id_A\times q \colon A\times Y\to A\times B$ is a 
bifibration.    
\end{remark} 

\begin{lemma} 
\label{lem:fibers of bifib kan} 
Let $(p,q)\colon X\to A\times B$ be a bifibration, where $A$ and $B$ are simplicial sets.  
Then the fiber $X_{(a,b)}$ of $(p,q)$ over $(a,b)$ 
is a Kan complex for every pair of vertices $(a,b)\in A\times B$.  
\end{lemma}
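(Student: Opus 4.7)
The plan is to verify directly that $X_{(a,b)}$ satisfies the extension condition against every horn inclusion $\Lambda^n_i \hookrightarrow \Delta^n$ for $n \geq 1$ and $0 \leq i \leq n$. To do so, I will lift horns in the fiber to horns in $X$ over $A \times B$, using the constant map at $(a,b)$ as the required extension to $\Delta^n \to A \times B$.

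More precisely, given a map $\Lambda^n_i \to X_{(a,b)}$, I would consider the induced commutative square
\[
\begin{tikzcd}
\Lambda^n_i \arrow[r] \arrow[d] & X \arrow[d,"{(p,q)}"] \\
\Delta^n \arrow[r,"f"'] & A \times B
\end{tikzcd}
\]
where $f$ is the constant $n$-simplex at $(a,b)$. A diagonal filler in this square factors through $X_{(a,b)}$ because $f$ is constant, so it suffices to produce such a filler. If $0 < i < n$, the filler exists by condition (1) in the definition of a bifibration, since $(p,q)$ is an inner fibration. If $i = 0$, the filler exists by condition (2), because the constant map $\pi_B f$ is in particular degenerate on $\Delta^{\{0,1\}}$. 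If $i = n$, the filler exists by condition (3), because the constant map $\pi_A f$ is degenerate on $\Delta^{\{n-1,n\}}$. These three cases exhaust the horn inclusions for $n \geq 1$, so $X_{(a,b)}$ is a Kan complex.

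There is no real obstacle here; the argument is a direct unpacking of the three clauses in Definition~\ref{def:bifibration}, relying only on the trivial observation that a constant edge is degenerate.
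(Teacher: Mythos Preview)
Your argument is correct. The paper's proof takes a slightly more indirect route: it first observes that bifibrations are stable under base change along maps of the form $f\times g$, so that $X_{(a,b)}\to \Delta^0\times\Delta^0$ is again a bifibration, then invokes Remark~2.4.7.4 of \cite{HTT} to conclude that $X_{(a,b)}\to \Delta^0$ is a right fibration, and finally uses that a right fibration over a point is a Kan complex. Your approach unwinds this chain by directly checking all horn fillers using the three clauses of Definition~\ref{def:bifibration}, relying only on the observation that a constant edge is degenerate. The underlying idea is the same in both cases---over a point the degeneracy hypotheses in clauses~(2) and~(3) are vacuous---but your version is more self-contained and avoids the external citation, at the cost of not isolating the intermediate fact that the fiber is a right (or left) fibration over $\Delta^0$.
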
 

\begin{proof} 
Clearly bifibrations over $A\times B$ are stable under base change along maps of the form 
$f\times g\colon A'\times B'\to A\times B$.  Therefore, 
pulling back along the map $a\times b\colon \Delta^{\set{0}}\times 
\Delta^{\set{0}}\to A\times B$ induces a bifibration $X_{(a,b)}\to 
\Delta^{\set{0}}\times \Delta^{\set{0}}$.  It follows (Remark 2.4.7.4 of \cite{HTT}) 
that $X_{(a,b)}\to \Delta^{\set{0}}$ is a right fibration.  Hence $X_{(a,b)}$ is a 
Kan complex.  
\end{proof} 

\subsection{Bivariant anodyne maps} 
\label{subsec:biv anodyne}
In this section we introduce the concept of 
bivariant anodyne maps and study some of their 
properties.  

\begin{definition} 
\label{def:biv anodyne} 
Let $A$ and $B$ be simplicial sets.  
Let us say that a map $u\colon M\to N$ in $(\sSet)_{/(A\times B)}$ 
is a {\em bivariant anodyne} map if it belongs to the weakly saturated class generated 
by the following classes of maps in $(\sSet)_{/(A\times B)}$: 
\begin{enumerate} 
\item the inner horn inclusions 
\[
\begin{tikzcd}[column sep=small] 
\Lambda^n_i \arrow[dr] \arrow[rr,hook] & & \Delta^n \arrow[dl] \\ 
& A\times B & 
\end{tikzcd} 
\]
where $0<i<n$; 
\item the horn inclusions 
\[
\begin{tikzcd}[column sep=small] 
\Lambda^n_0\arrow[dr] \arrow[rr,hook] & &  \Delta^n \arrow[dl,"{(f,g)}"] \\ 
& A\times B & 
\end{tikzcd} 
\]
where $f\colon \Delta^{\set{0,1}}\to B$ 
is a degenerate edge; 
\item the horn inclusions 
\[
\begin{tikzcd}[column sep=small]
\Lambda^n_n\arrow[dr] \arrow[rr,hook] & &  \Delta^n \arrow[dl,"{(f,g)}"] \\
& A\times B & 
\end{tikzcd} 
\]
where $g\colon \Delta^{\set{n-1,n}}\to A$ 
is a degenerate edge.  
\end{enumerate}
\end{definition} 

We extend the original usage of the term bifibration in \cite{HTT} to cover 
the following more general situation.  

\begin{definition} 
\label{def:general bifibn}
Let $A$ and $B$ be simplicial sets and let $f\colon X\to Y$ be a map in 
$(\sSet)_{/(A\times B)}$.  We say $f$ is a {\em bifibration} if it has the right 
lifting property against all bivariant anodyne maps in $(\sSet)_{/(A\times B)}$.  
\end{definition} 

\begin{remark} 
\label{rem:char of bifibrations} 
Let $X$ and $Y$ be objects of $(\sSet)_{/(A\times B)}$ and suppose that 
$X$ has structure map $(p,q)\colon X\to A\times B$.  A map 
$f\colon X\to Y$ in 
$(\sSet)_{/(A\times B)}$ 
is a bifibration if and only if the following conditions are 
satisfied: 

\begin{enumerate} 
\item $f$ is an inner fibration; 

\item for every commutative diagram 
\[
\begin{tikzcd} 
\Lambda^n_0 \arrow[r,"u"] \arrow[d] & X \arrow[d,"f"] \\ 
\Delta^n \arrow[r] \arrow[ur,dashed] & Y 
\end{tikzcd} 
\]
in which the edge $qu\colon \Delta^{\set{0,1}}\to B$ is degenerate, the indicated 
diagonal filler exists; 

\item for every commutative diagram 
\[
\begin{tikzcd} 
\Lambda^n_n \arrow[d] \arrow[r,"v"] & X \arrow[d,"f"] \\ 
\Delta^n \arrow[r] \arrow[ur,dashed] & Y 
\end{tikzcd} 
\]
in which the edge $pv\colon \Delta^{\set{n-1,n}}\to A$ is degenerate, the indicated 
diagonal filler exists.  
\end{enumerate} 

To see the equivalence of these statements note that if $f\colon X\to Y$ is 
a bifibration then the conditions (i), (ii) and (iii) are automatically satisfied.  
Conversely, suppose that $f\colon X\to Y$ is a map in $(\sSet)_{/(A\times B)}$ such that 
(i), (ii) and (iii) are satisfied.  The class of maps in $(\sSet)_{/(A\times B)}$ 
which have the right lifting property against $f$ is weakly saturated.  By hypothesis 
it contains the classes of maps (1), (2) and (3) from Definition~\ref{def:biv anodyne}; 
hence it contains the weakly saturated class generated by these maps, in other words 
the class of bivariant anodyne maps.  
\end{remark} 
 
\begin{remark}
Thus a map $X\to A\times B$ is a bifibration in the sense of Definition~\ref{def:general bifibn} 
if and only if it is a bifibration in the sense of Definition~\ref{def:bifibration}.  
\end{remark} 

\begin{remark} 
\label{rem:restriction of 2-sided fibn}
If $X\to Y$ is a bifibration in $(\sSet)_{/(A\times B)}$ then 
for every vertex $b$ in $B$ the restriction $X|A\times \set{b}\to Y|A\times 
\set{b}$ is a left fibration in $(\sSet)_{/A}$.  Similarly for every 
vertex $a$ in $A$ the restriction $X|\set{a}\times B\to Y|\set{a}\times B$ 
is a right fibration in $(\sSet)_{/B}$.  
\end{remark}

\begin{proposition} 
\label{prop:alt desc biv anodyne}
Let $A$ and $B$ be simplicial sets.  
Consider the following classes of morphisms in $(\sSet)_{/(A\times B)}$: 

(1) all inner horn inclusions 
\[
\begin{tikzcd}[column sep =small] 
\Lambda^n_i \arrow[dr] \arrow[rr,hook] & &  \Delta^n \arrow[dl] \\ 
& A\times B & 
\end{tikzcd} 
\]
for $0<i<n$; 

(2) all horn inclusions 
\[
\begin{tikzcd} 
\Lambda^n_0 \arrow[dr] \arrow[rr,hook] & &  \Delta^n \arrow[dl,"{(f,g)}"] \\ 
& A\times B & 
\end{tikzcd}  
\]
such that $g|\Delta^{\set{0,1}}$ 
is a degenerate edge of $B$; 

($2'$) all inclusions 
\[
\begin{tikzcd}[column sep=small] 
\Delta^1\times \partial\Delta^n\cup \set{0}\times \Delta^n \arrow[dr] \arrow[rr,hook] & &  \Delta^1\times \Delta^n \arrow[dl,"{(f,g)}"] \\ 
& A\times B & 
\end{tikzcd} 
\]
such that $g|\Delta^1\times \set{i}$ is a degenerate edge of $B$ for every vertex $i$ in $\Delta^n$; 

($2''$) all inclusions of the form 
\[
\begin{tikzcd}[column sep=small] 
\Delta^1\times K \cup \set{0}\times L \arrow[dr] \arrow[rr,hook] & & \Delta^1\times L \arrow[dl,"{(f,g)}"] \\ 
& A\times B & 
\end{tikzcd} 
\]
where $K\hookrightarrow L$ is an inclusion and where $g|\Delta^1\times \set{v}$ is a degenerate edge 
of $B$ for every vertex $v$ of $L$.  

Then the weakly saturated classes of morphisms in $(\sSet)_{/(A\times B)}$ generated by the classes (1) and (2), the classes 
(1) and ($2'$), and the classes (1) and ($2''$) are all equal. 
\end{proposition}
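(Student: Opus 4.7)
The plan is to prove the three saturated classes coincide by establishing four inclusions, with the main obstacle being a retract argument at the end. Write $\mathcal{S}_i$ for the saturated class generated by (1) together with (i), for $i \in \{2, 2', 2''\}$.

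First, the inclusion $\mathcal{S}_{2'} \subseteq \mathcal{S}_{2''}$ is immediate, since every (2') morphism arises as a (2'') morphism on taking $K = \partial \Delta^n$ and $L = \Delta^n$. For the reverse inclusion $\mathcal{S}_{2''} \subseteq \mathcal{S}_{2'}$, I would argue by a skeletal induction on the monomorphism $K \hookrightarrow L$ in (2''): at each stage, the attachment of a non-degenerate simplex $\Delta^n$ of $L \setminus K$ along $\partial \Delta^n$ exhibits the inclusion $\Delta^1 \times K \cup \{0\} \times L \hookrightarrow \Delta^1 \times L$ as growing by a pushout of the (2') morphism $\Delta^1 \times \partial \Delta^n \cup \{0\} \times \Delta^n \hookrightarrow \Delta^1 \times \Delta^n$, with the ambient structure map restricting to provide the required degeneracy condition on $\Delta^1 \times \{i\}$ for each vertex $i$. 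Together these give $\mathcal{S}_{2'} = \mathcal{S}_{2''}$.

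Next, I would show $\mathcal{S}_{2'} \subseteq \mathcal{S}_2$ using the classical shuffle decomposition of $\Delta^1 \times \Delta^n$ into the $n+1$ non-degenerate $(n+1)$-simplices $\sigma_0, \ldots, \sigma_n$, where $\sigma_k$ has vertices $(0,0), \ldots, (0,k), (1,k), \ldots, (1,n)$. Attaching them in descending order $\sigma_n, \sigma_{n-1}, \ldots, \sigma_1$ to the subcomplex $\Delta^1 \times \partial \Delta^n \cup \{0\} \times \Delta^n$, each $\sigma_k$ with $1 \leq k \leq n$ is glued along its inner horn $\Lambda^{n+1}_k \hookrightarrow \Delta^{n+1}$ (the unique new face of $\sigma_k$ at that stage being the shared face $d_k \sigma_k = d_k \sigma_{k-1}$), contributing class (1) morphisms. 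The final attachment of $\sigma_0$ is along the left horn $\Lambda^{n+1}_0 \hookrightarrow \Delta^{n+1}$; the edge $\{0, 1\}$ of $\sigma_0$ equals $(0, 0) \to (1, 0) \in \Delta^1 \times \{0\}$, whose $B$-projection is degenerate by the (2') hypothesis, so this is a class (2) morphism.

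The main obstacle is the inclusion $\mathcal{S}_2 \subseteq \mathcal{S}_{2''}$. For $n = 1$, $\Lambda^1_0 = \{0\} \hookrightarrow \Delta^1$ is literally a (2'') morphism (with $L = \Delta^0$ and $K = \emptyset$). For $n \geq 2$, I would exhibit $\Lambda^n_0 \hookrightarrow \Delta^n$ (with the (2) degeneracy condition on the edge $\{0, 1\}$) as a retract, in $(\sSet)_{/(A \times B)}$, of the (2'') corner $\Delta^1 \times \Lambda^n_0 \cup \{0\} \times \Delta^n \hookrightarrow \Delta^1 \times \Delta^n$ (taking $K = \Lambda^n_0$, $L = \Delta^n$). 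Define the section $s \colon \Delta^n \to \Delta^1 \times \Delta^n$ by $s(x) = (1, x)$, and the retraction $r \colon \Delta^1 \times \Delta^n \to \Delta^n$ by $r(0, 0) = r(0, 1) = 0$, $r(0, i) = i$ for $i \geq 2$, and $r(1, i) = i$ for all $i$. Direct verification shows that $r$ is monotone, $rs = \mathrm{id}$, $s(\Lambda^n_0) \subseteq \Delta^1 \times \Lambda^n_0$, and $r$ sends the corner into $\Lambda^n_0$ (for any $\tau \in \Lambda^n_0$ missing some $j \in \{1, \ldots, n\}$, the image $r(\Delta^1 \times \tau)$ has vertex set contained in $\tau \cup \{0\}$ and still misses $j$). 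Equipping $\Delta^1 \times \Delta^n$ with the structure map $(f, g) \circ r$ makes $s$ and $r$ morphisms over $A \times B$, and the (2'') degeneracy condition is satisfied because $r$ sends $\Delta^1 \times \{1\}$ onto the edge $\{0, 1\}$ of $\Delta^n$, whose $B$-projection is degenerate by the (2) hypothesis, and sends each $\Delta^1 \times \{i\}$ for $i \neq 1$ to a single vertex of $\Delta^n$.
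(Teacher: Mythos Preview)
Your proof is correct and follows essentially the same strategy as the paper's: the same skeletal argument for $\mathcal{S}_{2''}\subseteq\mathcal{S}_{2'}$, the same shuffle filtration of $\Delta^1\times\Delta^n$ (attaching $\sigma_n,\ldots,\sigma_1$ along inner horns and $\sigma_0$ along $\Lambda^{n+1}_0$) for $\mathcal{S}_{2'}\subseteq\mathcal{S}_2$, and the same explicit retraction $r$ (collapsing the vertex $(0,1)$ onto $(0,0)$) for $\mathcal{S}_2\subseteq\mathcal{S}_{2''}$. Your separate treatment of $n=1$ is harmless but unnecessary, since the retract argument already covers it.
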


\begin{proof} 
The proof of this proposition is essentially the same as the proof of Proposition 3.1.1.5 
of \cite{HTT}.  We give the details.  To begin with, the weakly saturated class generated 
by (1) and ($2'$) is clearly contained in the weakly saturated class generated by (1) and 
($2''$).  As in the proof of op.\ cit., one easily 
proves that the weakly saturated class generated by 
(1) and ($2''$) is contained in the weakly saturated class 
generated by (1) and ($2'$).  It follows that the weakly 
saturated class generated by (1) and ($2'$) is equal to the 
weakly saturated class generated by (1) and ($2''$).  We prove that 
every map in (2) is a retract of a map in ($2''$).  Suppose given a map 
\[
\begin{tikzcd}[column sep=small] 
\Lambda^n_0 \arrow[rr,hook] \arrow[dr] & & \Delta^n \arrow[dl,"{(f,g)}"] \\ 
& A\times B 
\end{tikzcd} 
\]
such that $g|\Delta^{\set{0,1}}$ is a degenerate edge of $B$.  Let $j\colon \Delta^n\to \Delta^n\times 
\Delta^1$ correspond to the inclusion $\Delta^n\times \set{1}\subseteq \Delta^n\times \Delta^1$.  
Define the retraction $r\colon \Delta^n\times \Delta^1\to \Delta^n$ 
in $(\sSet)_{/(A\times B)}$ as the map induced by 
the map $r\colon [n]\times [1]\to [n]$ of partially ordered sets defined by 
\[
r(m,0) = \begin{cases} m & \text{if}\ m\neq 1, \\ 0 & \text{if}\ m=1 \end{cases} 
\]
and by $r(m,1) = m$ for all $m\in [n]$.  Observe that the composite map 
$gr\colon \Delta^n\times \Delta^1\to B$ restricts to a degenerate edge  
$gr|\set{i}\times \Delta^1$ of $B$ for every vertex $i$ of $\Delta^n$; this is clear 
from the definition of $r$ if $i\neq 1$ and follows from the assumption that $g|\Delta^{\set{0,1}}$ 
is degenerate when $i=1$.  The maps $j$ and $r$ exhibit the inclusion $\Lambda^n_0
\hookrightarrow \Delta^n$ as a retract of the map 
\[
\Lambda^n_0\times \Delta^1 \cup \Delta^n\times \set{0}\hookrightarrow \Delta^n\times 
\Delta^1 
\]
in $(\sSet)_{/(A\times B)}$ with structure map $(f,g)r\colon \Delta^n\times \Delta^1\to A\times B$.  
From the discussion above, this map belongs to the class of maps ($2''$).  It follows that 
the weakly saturated class generated by (1) and (2) is contained in the weakly saturated 
class generated by (1) and ($2''$).    

We now prove that the weakly saturated class generated by (1) and ($2'$) is contained in the 
weakly saturated class generated by (1) and (2).  Suppose given a map in ($2'$) of the 
form 
\[
\begin{tikzcd}[column sep=small]
\partial\Delta^n\times \Delta^1 \cup \Delta^n\times \set{0} \arrow[dr] \arrow[rr,hook] 
& & \Delta^n \times \Delta^1 \arrow[dl,"{(f,g)}"] \\ 
& A\times B & 
\end{tikzcd} 
\]
in which the structure map $(f,g)\colon \Delta^n\to A\times B$ satisfies 
$g|\set{i}\times \Delta^1$ is a degenerate edge of $B$ for every vertex $i$ of 
$\Delta^n$.  We have the standard filtration 
\[
\partial\Delta^n\times \Delta^1 \cup \Delta^n\times \set{0} = 
X^0\subseteq X^1\subseteq \cdots \subseteq X^n = \Delta^n\times \Delta^1 
\]
in which the inclusion $X^{i}\subseteq X^{i+1}$ fits into a pushout diagram 
of the form 
\[
\begin{tikzcd} 
\Lambda^{n+1}_{i} \arrow[r] \arrow[d] & X^i \arrow[d] \\ 
\Delta^{n+1} \arrow[r] & X^{i+1} 
\end{tikzcd} 
\]
for every $i=0,1,\ldots,n$.  The $(n+1)$-simplex of $X^{n}$ obtained from $X^{n-1}$ 
via the attaching map $\Lambda^{n+1}_0\to X^{n-1}$ corresponds to the 
$(n+1)$-chain 
\[
\begin{tikzcd} 
(0,0) \arrow[d] & & & \\ 
(0,1) \arrow[r] & (1,1) \arrow[r] & \cdots \arrow[r] & (n,1) 
\end{tikzcd} 
\]
of $[n]\times [1]$.  By assumption the edge $g|\set{0}\times \Delta^1$ of 
$B$ is degenerate.  It follows easily that the map above belongs to the weakly 
saturated class generated by the maps (1) and (2).  
\end{proof}

\subsection{Stability properties for bifibrations} 
\label{subsec:stability for 2-sided}
In this section we prove some stability properties 
for bifibrations under exponentiation, analogous to the discussion in 
Section 2.1.2 of \cite{HTT} for left fibrations.  

\begin{proposition} 
\label{prop:exp}
Let $f\colon X\to Y$ be a bifibration in $(\sSet)_{/(A\times B)}$.  
Then for any monomorphism $u\colon M\to N$ in 
$(\sSet)_{/(A\times B)}$, the induced map 
\[
X^N\to X^M\times_{Y^M}Y^N 
\]
is a bifibration in $(\sSet)_{/(A^N\times B^N)}$.  
\end{proposition}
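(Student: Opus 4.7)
The plan is to apply the standard Leibniz--exponential adjunction. By the characterization of bifibrations in Remark~\ref{rem:char of bifibrations}, it suffices to show that the induced map $X^N \to X^M \times_{Y^M} Y^N$ has the right lifting property against every bivariant anodyne map $V\colon K \to L$ in $(\sSet)_{/(A^N \times B^N)}$. Under the adjunction $(-) \times N \dashv (-)^N$, such a lifting problem is equivalent to a lifting problem for $f\colon X \to Y$ in $(\sSet)_{/(A \times B)}$ against the pushout-product
\[
u \mathbin{\hat{\times}} V \colon (M \times L) \cup_{M \times K} (N \times K) \to N \times L,
\]
where the structure map $N \times L \to A \times B$ is adjoint to the structure map $L \to A^N \times B^N = (A \times B)^N$ of $L$. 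Hence it suffices to show that $u \mathbin{\hat{\times}} V$ is bivariant anodyne in $(\sSet)_{/(A \times B)}$ whenever $u$ is a monomorphism and $V$ is bivariant anodyne.

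The class of those $V$ for which $u \mathbin{\hat{\times}} V$ is bivariant anodyne is saturated, because the functor $u \mathbin{\hat{\times}} (-)$ is a left adjoint in the $V$ variable and bivariant anodyne maps form a saturated class. Hence it suffices to verify the claim on a generating set. Using Proposition~\ref{prop:alt desc biv anodyne} and its symmetric dual, we reduce to three cases: the inner horn inclusions $\Lambda^n_i \hookrightarrow \Delta^n$ with $0 < i < n$ (class (1)); inclusions of the form $\Delta^1 \times K_0 \cup \set{0} \times L_0 \to \Delta^1 \times L_0$ whose structure map $(f,g)\colon \Delta^1 \times L_0 \to A^N \times B^N$ satisfies that $g|\Delta^1 \times \set{w}$ is degenerate for every vertex $w$ of $L_0$ (class ($2''$)); and the symmetric dual, where $\set{1}$ replaces $\set{0}$ and the $A$-component is degenerate (class ($3''$)).

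For case (1), the pushout-product of a monomorphism with an inner anodyne map is inner anodyne by standard arguments; hence $u \mathbin{\hat{\times}} V$ is inner anodyne in $(\sSet)_{/(A \times B)}$, and in particular bivariant anodyne. For case ($2''$), writing $V = j \mathbin{\hat{\times}} i_0$ where $j\colon \set{0} \hookrightarrow \Delta^1$ and $i_0\colon K_0 \hookrightarrow L_0$, the associativity and symmetry of the Leibniz pushout-product yield
\[
u \mathbin{\hat{\times}} V \cong j \mathbin{\hat{\times}} (u \mathbin{\hat{\times}} i_0).
\]
Since $u \mathbin{\hat{\times}} i_0\colon M'' \to N''$ is again a monomorphism, $u \mathbin{\hat{\times}} V$ has the form $\Delta^1 \times M'' \cup \set{0} \times N'' \to \Delta^1 \times N''$. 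Unwinding the adjunction shows that the degeneracy of $g$ on each edge $\Delta^1 \times \set{w}$ passes to the analogous degeneracy for the $B$-component of the evaluation structure map on $\Delta^1 \times N \times L_0$, so $u \mathbin{\hat{\times}} V$ lies in class ($2''$) of Proposition~\ref{prop:alt desc biv anodyne} and is therefore bivariant anodyne. Case ($3''$) is handled dually.

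The main obstacle is the bookkeeping in cases ($2''$) and ($3''$): one must identify precisely the evaluation structure map $N \times L \to A \times B$ coming from the Leibniz--exponential adjunction and verify that the degeneracy conditions characterizing the generating classes ($2''$) and ($3''$) of Proposition~\ref{prop:alt desc biv anodyne} are preserved under Leibniz product with the monomorphism $u$.
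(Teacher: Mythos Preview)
Your proposal is correct and follows essentially the same approach as the paper: both reduce via the exponential adjunction to checking that a pushout-product is bivariant anodyne, and both verify this on generators using Proposition~\ref{prop:alt desc biv anodyne}. The only differences are cosmetic---the paper cites HTT for the inner fibration condition separately and tests against the class ($2'$) generators rather than ($2''$), while you handle all cases uniformly and phrase the key step as associativity of the Leibniz product $u\mathbin{\hat{\times}}(j\mathbin{\hat{\times}}i_0)\cong j\mathbin{\hat{\times}}(u\mathbin{\hat{\times}}i_0)$; the underlying computation is the same.
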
 

\begin{proof} 
The induced map $X^N\to X^M\times_{Y^M}Y^N$ is an inner fibration by Corollary 2.3.2.5.\  of \cite{HTT}.
We prove that the induced map has the right lifting property 
against the class of maps (2) from Definition~\ref{def:biv anodyne}.  
By Proposition~\ref{prop:alt desc biv anodyne} it suffices to 
prove that the indicated diagonal filler exists in every commutative diagram of the form 
\[
\begin{tikzcd} 
\Delta^1\times \partial\Delta^n\cup \set{0}\times \Delta^n \arrow[r] \arrow[d] & X^N \arrow[d] \\ 
\Delta^1\times \Delta^n \arrow[r] \arrow[ur,dashed] & X^M\times_{Y^M}Y^N 
\end{tikzcd} 
\]
where, if $u\colon \Delta^1\times \Delta^n\to X^M\times_{Y^M}Y^N\to A^N\times B^N$ denotes 
the composite map, then $\pi_{B^N}u|\Delta^1\times \set{i}$ is a degenerate edge in $B^N$ for 
every vertex $i$ of $\Delta^n$.  By adjointness, 
it is sufficient to prove that the indicated diagonal filler exists in the commutative 
diagram 
\[
\begin{tikzcd} 
\Delta^1\times (\partial\Delta^n\times N \cup 
\Delta^n\times M)\cup \set{0}\times \Delta^n\times N 
\arrow[d] \arrow[r] & X \arrow[d,"f"] \\ 
\Delta^1\times \Delta^n\times N \arrow[r] \arrow[ur,dashed] & Y. 
\end{tikzcd} 
\]
For every vertex $i$ of $\Delta^n$ and for every vertex $v$ of $N$, the map $\Delta^1\times 
\Delta^n\times N\to Y$ restricts to an edge $\Delta^1\times \set{i}\times \set{n}\to Y$ 
of $Y$ which is mapped to a degenerate edge in $B$ by $q\colon Y\to B$, where 
$(p,q)\colon Y\to A\times B$ denotes the structure map. 
Therefore the indicated diagonal filler exists by 
Proposition~\ref{prop:alt desc biv anodyne}.  

The proof that the induced map has the right lifting property against all maps in 
the class (3) of Definition~\ref{def:biv anodyne} is completely analogous.  
\end{proof}

Let $X$ and $M$ be objects of $(\sSet)_{/(A\times B)}$.  
Recall (Notation~\ref{not:simp mapping spaces}) that 
$\map_{A\times B}(M,X)$ denotes the simplicial mapping 
space for the simplicially enriched category  
$(\sSet)_{/(A\times B)}$.   

\begin{lemma} 
\label{cor:map AxB Kan cplx}
Let $p\colon X\to Y$ be a 
bifibration in 
$(\sSet)_{/(A\times B)}$.  
Then for any 
monomorphism $u\colon K\to L$ 
in $(\sSet)_{/(A\times B)}$, the 
induced map 
\[
\map_{A\times B}(u,p)\colon \map_{A\times B}(L,X) \to 
\map_{A\times B}(K,X)\times_{ 
\map_{A\times B}(K,Y)} 
\map_{A\times B}(L,Y) 
\] 
is a Kan fibration 
between Kan complexes.  
\end{lemma}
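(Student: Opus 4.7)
The plan is to reduce the lemma to Proposition~\ref{prop:exp} by identifying $\map_{A\times B}(u,p)$ as the base change of an exponentiated bifibration along a single vertex of its base. I would first apply Proposition~\ref{prop:exp} to $p$ and $u$ to produce the bifibration
\[
\phi\colon X^L \longrightarrow X^K \times_{Y^K} Y^L
\]
in $(\sSet)_{/(A^L\times B^L)}$. Unwinding the pullback definition of $\map_{A\times B}(-,-)$ from Notation~\ref{not:simp mapping spaces} then shows that $\map_{A\times B}(u,p)$ is precisely the fiber of $\phi$ over the vertex $v\colon \Delta^0 \to A^L\times B^L$ classifying the structure map $L\to A\times B$.

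To see that this fiber is a Kan fibration, I would argue more generally that the base change of any bifibration $\psi\colon E\to F$ in $(\sSet)_{/(A'\times B')}$ along a vertex $(a,b)\in A'\times B'$ is a Kan fibration. Given a horn-filling problem $\Lambda^n_k \to E_{(a,b)}$ with extension $\Delta^n \to F_{(a,b)}$, composing into $E$ and $F$ yields a lifting problem whose structure map $\Delta^n \to A'\times B'$ is constant at $(a,b)$. Every edge of $\Delta^n$ is therefore degenerate in $A'$ and $B'$, so conditions (2) and (3) of Definition~\ref{def:biv anodyne} apply for outer horns and condition (1) applies for inner horns without restriction; the resulting filler $\Delta^n \to E$ has constant composite to $A'\times B'$ and so factors through $E_{(a,b)}$. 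Equivalently, the bivariant anodyne maps in $(\sSet)_{/\Delta^0}$ coincide with the anodyne maps of $\sSet$, so any bifibration there is a Kan fibration.

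For the Kan-complex claim, I would apply Proposition~\ref{prop:exp} to the monomorphisms $\emptyset\hookrightarrow L$ and $\emptyset\hookrightarrow K$, paired with the structure maps of $X$ and $Y$ themselves (which in the fibrant setting where the lemma is applied are bifibrations over $A\times B$). This yields bifibrations $X^L\to A^L\times B^L$, $X^K \to A^K\times B^K$ and $Y^L\to A^L\times B^L$; Lemma~\ref{lem:fibers of bifib kan} then identifies the fibers over the relevant vertices --- namely the mapping spaces $\map_{A\times B}(L,X)$, $\map_{A\times B}(K,X)$, $\map_{A\times B}(L,Y)$ and $\map_{A\times B}(K,Y)$ --- as Kan complexes. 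The target of $\map_{A\times B}(u,p)$ is then a pullback of Kan complexes along the Kan fibration $\map_{A\times B}(L,Y)\to \map_{A\times B}(K,Y)$ supplied by the previous step, hence itself a Kan complex. The main subtlety will be verifying the compatibility of base change with the degeneracy conditions in Definition~\ref{def:biv anodyne}; once this is in hand, the remainder is a careful tracking of pullbacks.
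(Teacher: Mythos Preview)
Your approach is correct and takes a different, more direct route than the paper. The paper proceeds in three stages: it first shows that $\map_{A\times B}(M,Z)$ is Kan for any bifibration $Z\to A\times B$; it then treats the special case $Y = A\times B$ by proving that $\map_{A\times B}(L,X)\to\map_{A\times B}(K,X)$ is an inner fibration between Kan complexes with the right lifting property against $\{0\}\subseteq\Delta^1$, and invokes Lemma~\ref{lem:inner implies kan}; finally it repeats this inner-fibration-plus-Lemma~\ref{lem:inner implies kan} argument for the general case. Your argument bypasses Lemma~\ref{lem:inner implies kan} entirely: by recognising $\map_{A\times B}(u,p)$ as the fiber of the bifibration $\phi$ of Proposition~\ref{prop:exp} over a single vertex of $A^L\times B^L$, you reduce directly to the observation that a bifibration pulled back to a point is a Kan fibration, since every edge in the base becomes degenerate and all three clauses of Definition~\ref{def:biv anodyne} apply unconditionally. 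This is cleaner and in fact establishes the Kan-fibration part of the conclusion without any assumption on the structure maps $X\to A\times B$ and $Y\to A\times B$, whereas the paper's route through Lemma~\ref{lem:inner implies kan} already needs the source and target to be Kan complexes.

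For the ``between Kan complexes'' clause you correctly note that one needs $X\to A\times B$ and $Y\to A\times B$ to themselves be bifibrations; this assumption is not in the lemma statement but is implicit in the paper's proof as well (the appeal to ``the results of the preceding paragraphs'' in the general case only supplies Kan complexes when the structure maps are bifibrations), and it holds in every application of the lemma in the paper. Your parenthetical acknowledgement of this is appropriate.
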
 

\begin{proof} 
We first prove that 
$\map_{A\times B}(M,X)$ is a 
Kan complex for any bifibration 
$X\to A\times B$ and any object 
$M$ in $(\sSet)_{/(A\times B)}$.  
By Proposition~\ref{prop:exp} the induced map 
$X^M\to A^M\times B^M$ is a bifibration.  
We have a pullback diagram 
\[
\begin{tikzcd} 
\map_{A\times B}(M,X) \arrow[d] \arrow[r] & X^M \arrow[d] \\ 
\Delta^0 \arrow[r,"\phi"] & A^M\times B^M 
\end{tikzcd} 
\]
where $\phi$ corresponds to the structure 
map $M\to A\times B$.  Hence $\map_{A\times B}(M,X)$ 
is a Kan complex by Lemma~\ref{lem:fibers of bifib kan}.    

Next we prove the assertion in the special 
case that $p$ is a  
bifibration $X\to A\times B$.  
Suppose that $u\colon K\to L$ is a monomorphism 
in $(\sSet)_{/(A\times B)}$.  The induced map 
\[
\map_{A\times B}(L,X)\to 
\map_{A\times B}(K,X) 
\]
is an inner fibration.  Since it is an inner fibration 
between Kan complexes it suffices by 
Lemma~\ref{lem:inner implies kan} 
to prove that it has the right lifting 
property with respect to the inclusion 
$\Delta^{\set{0}}\subseteq \Delta^1$.
By adjointness, the indicated diagonal filler 
exists in the diagram 
\[
\begin{tikzcd} 
\Delta^{\set{0}} \arrow[r] 
\arrow[d] & \map_{A\times B}(L,X) \arrow[d] \\ 
\Delta^1 \arrow[r] \arrow[ur,dashed] & \map_{A\times B}(K,X) 
\end{tikzcd} 
\]
if and only if the indicated diagonal 
filler exists in the diagram 
\[
\begin{tikzcd} 
L\times \Delta^{\set{0}}\cup 
K\times \Delta^1 
\arrow[r] \arrow[d] & X \arrow[d] \\ 
L\times \Delta^1 \arrow[r] 
\arrow[ur,dashed] & A\times B 
\end{tikzcd} 
\]
But for any vertex $v$ of $L$, the map 
$\set{v}\times \Delta^1\to A\times B$ 
is sent to a degenerate edge of $B$ under 
the projection $A\times B\to B$.  The indicated 
diagonal fillers therefore exist by 
Proposition~\ref{prop:alt desc biv anodyne}. 
It follows by Lemma~\ref{lem:inner implies kan} 
that the induced map above is a Kan fibration 
between Kan complexes.  

Finally, we prove the general form of the 
assertion.  Suppose that $p\colon X\to Y$ is a 
bifibration and that $u\colon K\to L$ 
is a monomorphism.  We use 
Lemma~\ref{lem:inner implies kan} again.  The map 
\[
\map_{A\times B}(L,X) \to 
\map_{A\times B}(K,X)\times_{ 
\map_{A\times B}(K,Y)} 
\map_{A\times B}(L,Y) 
\] 
is an inner fibration between Kan complexes by 
the results of the preceding paragraphs.  
Therefore we are reduced to proving that the 
indicated diagonal filler exists in any 
commutative diagram of the form 
\[
\begin{tikzcd} 
\Delta^{\set{0}} \arrow[r] \arrow[d] & 
\map_{A\times B}(L,X) \arrow[d] \\ 
\Delta^1 \arrow[r] \arrow[ur,dashed] & 
\map_{A\times B}(K,X)\times_{ 
\map_{A\times B}(K,Y)} 
\map_{A\times B}(L,Y) 
\end{tikzcd} 
\]
By adjointess, this is equivalent to 
proving that the indicated diagonal 
filler exists in the induced diagram 
\[
\begin{tikzcd} 
L\times \Delta^{\set{0}} 
\cup K\times \Delta^1 
\arrow[d] \arrow[r] & 
X \arrow[d,"p"]            \\ 
L\times \Delta^1 
\arrow[ur,dashed] \arrow[r] 
& Y 
\end{tikzcd} 
\]
By Proposition~\ref{prop:alt desc biv anodyne}, 
using the fact that the composite map 
$L\times \Delta^1\to Y\to A\times B$ factors 
through $L\times \Delta^{0}$ via the 
structure map $L\to A\times B$, we see 
that such a diagonal filler exists.  
This completes the proof of the Lemma.  
\end{proof} 

\subsection{The bivariant model structure} 
\label{subsec:biv model str} 
In this section we 
describe the model structure for bifibrations 
(see Theorem~\ref{thm:biv model str}).

\begin{definition} 
A map $Y\to Z$ in $(\sSet)_{/(A\times B)}$ is said to be a {\em bivariant equivalence} 
if the induced map 
\[
\map_{A\times B}(Z,X)\to \map_{A\times B}(Y,X) 
\]
is a homotopy equivalence between Kan complexes for every bifibration $X\to A\times B$. 
\end{definition} 

\begin{lemma} 
\label{lem:biv ano implies biv eq}
Suppose that $u\colon K\to L$ is a bivariant anodyne map.  Then $u$ is a bivariant equivalence.  
\end{lemma}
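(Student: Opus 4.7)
The plan is to prove the stronger statement that for every bivariant anodyne $u\colon K\to L$ and every bifibration $X\to A\times B$, the map $\map_{A\times B}(u,X)\colon \map_{A\times B}(L,X)\to\map_{A\times B}(K,X)$ is a trivial Kan fibration. By Lemma~\ref{cor:map AxB Kan cplx} this map is already a Kan fibration between Kan complexes, and a trivial Kan fibration between Kan complexes is a homotopy equivalence, giving the result.

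Let $\mathcal{W}$ denote the class of monomorphisms $u\colon K\to L$ in $(\sSet)_{/(A\times B)}$ such that $\map_{A\times B}(u,X)$ is a trivial Kan fibration for every bifibration $X\to A\times B$. Since $\map_{A\times B}(-,X)$ converts colimits in $(\sSet)_{/(A\times B)}$ to limits, and trivial Kan fibrations are stable under pullback, sequential inverse limits, and retracts, the class $\mathcal{W}$ is weakly saturated. It therefore suffices to show that $\mathcal{W}$ contains each generator of Definition~\ref{def:biv anodyne}. By the lifting characterization of trivial Kan fibrations and adjunction, this reduces to the following pushout-product claim: if $u\colon K\to L$ is a generating bivariant anodyne map and $\partial\Delta^m\subseteq \Delta^m$ is a boundary inclusion, then
\[
(L\times\partial\Delta^m)\cup_{K\times\partial\Delta^m}(K\times\Delta^m)\to L\times\Delta^m,
\]
equipped with the structure map $L\times\Delta^m\to L\to A\times B$, is bivariant anodyne.

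For generators of type (1), inner horn inclusions, this follows from the standard fact that inner anodynes are closed under pushout-product with monomorphisms; the resulting map is inner anodyne in $\sSet$ and, via the projection structure map, belongs to the weakly saturated class generated by type (1). For generators of type (2) — and dually type (3) — I would use Proposition~\ref{prop:alt desc biv anodyne} to replace class (2) by the equivalent class $(2'')$. A map in $(2'')$ has the form $\Delta^1\times K\cup\{0\}\times L\to\Delta^1\times L$ with structure map to $B$ degenerate on each $\Delta^1\times\{v\}$. A direct distributive computation identifies its pushout-product with $\partial\Delta^m\subseteq\Delta^m$ as the map
\[
\Delta^1\times \bigl(K\times\Delta^m\cup L\times\partial\Delta^m\bigr)\cup\{0\}\times (L\times\Delta^m)\to \Delta^1\times(L\times\Delta^m),
\]
which is again of type $(2'')$, since the degeneracy condition on the structure map to $B$ is inherited: the composite structure map factors through $\Delta^1\times L$ and hence does not depend on the $\Delta^m$ factor.

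The main obstacle is precisely this pushout-product verification in classes (2) and (3); the original description in Definition~\ref{def:biv anodyne}, framed in terms of horn inclusions, does not close up transparently under pushout-product, and it is the reformulation $(2'')$ afforded by Proposition~\ref{prop:alt desc biv anodyne} that makes the argument go through cleanly.
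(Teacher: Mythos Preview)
Your proposal is correct and follows essentially the same approach as the paper: both define the class of monomorphisms for which the induced map on mapping spaces is a trivial Kan fibration, show it is weakly saturated, and verify it contains the generators by passing from class~(2) to class~($2''$) via Proposition~\ref{prop:alt desc biv anodyne} and checking closure under pushout-product with boundary inclusions. Your distributive computation of the pushout-product is slightly more explicit than the paper's, which simply asserts that the resulting map ``follows easily'' to be in~($2''$), but the argument is the same.
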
 

\begin{proof} 
Let $\mathcal{A}$ denote 
the class of monomorphisms 
$u\colon K\to L$ in $(\sSet)_{/(A\times B)}$ such that the induced map 
\[
\map_{A\times B}(L,X) \to \map_{A\times B}(K,X) 
\]
is a homotopy equivalence for all bifibrations 
$X\to A\times B$. By 
Lemma~\ref{cor:map AxB Kan cplx}, the class 
$\mathcal{A}$ is equivalently the class of 
monomorphisms $u\colon K\to L$ in 
$(\sSet)_{/(A\times B)}$ such that the induced 
map above is a trivial Kan fibration   
for every bifibration $X\to A\times B$. 
It follows easily that $\mathcal{A}$ is weakly saturated. 
 
To complete the proof we will prove that $\mathcal{A}$ contains the classes (1), (2) and (3) from 
Definition~\ref{def:biv anodyne}.  It is clear that $\mathcal{A}$ contains the class of inner 
anodyne maps in $(\sSet)_{/(A\times B)}$.  We prove that $\mathcal{A}$ contains the class of maps 
(2) from Definition~\ref{def:biv anodyne} (the proof that $\mathcal{A}$ contains the class of 
maps (3) from Definition~\ref{def:biv anodyne} is completely analogous).  

It suffices to prove 
that $\mathcal{A}$ contains the class of maps ($2''$) from Proposition~\ref{prop:alt desc biv anodyne}.  
Let $X\to A\times B$ be a bifibration.  Let $u\colon M\to N$ belong to the class of 
maps ($2''$) from Proposition~\ref{prop:alt desc biv anodyne}.  We prove that 
the induced map 
\[
\map_{A\times B}(N,X)\to \map_{A\times B}(M,X) 
\]
is a trivial Kan fibration.   
Consider a commutative diagram 
\[
\begin{tikzcd} 
\partial\Delta^n \arrow[r] \arrow[d] & \map_{A\times B}(N,X) \arrow[d] \\ 
\Delta^n \arrow[r] \arrow[ur,dashed]& \map_{A\times B}(M,X) 
\end{tikzcd}
\]
To show that the indicated diagonal filler in this diagram exists, it suffices, by adjointness, 
to prove that $X\to A\times B$ has the right lifting property against 
the canonical map 
\begin{equation} 
\label{eq:pushout product}
\partial\Delta^n\times N\cup \Delta^n\times M\to \Delta^n\times N  
\end{equation}
in $(\sSet)_{/(A\times B)}$ where the structure map $\Delta^n\times N\to A\times B$ factors 
as $\Delta^n\times N\xrightarrow{p_2} N\to A\times B$, and where $N\to A\times B$ is 
the given structure map of the object $N$ of $(\sSet)_{/(A\times B)}$.  
It follows easily that the map~\eqref{eq:pushout product} belongs to the class of maps 
($2''$) from Proposition~\ref{prop:alt desc biv anodyne} and hence the 
indicated diagonal filler can be found. 
\end{proof} 

Recall that a {\em fiberwise homotopy} 
between maps $f,g\colon X\to Y$ in $(\sSet)_{/(A\times B)}$ 
is an edge in the simplicial set 
$\map_{A\times B}(X,Y)$ (see Notation~\ref{not:simp mapping spaces})
between the vertices $f$ and $g$.  Recall that a map $h\colon X\to Y$ in 
$(\sSet)_{/(A\times B)}$ is said to be a {\em fiberwise homotopy equivalence} 
if there exists a map $k\colon Y\to X$ in $(\sSet)_{/(A\times B)}$ such that 
the maps $hk, 1_Y$ and the maps $kh,1_X$ are fiberwise homotopic.      

We have the following result.  

\begin{lemma}
\label{lem:fiberwise hty equiv} 
If $f\colon X\to Y$ is a fiberwise homotopy equivalence in $(\sSet)_{/(A\times B)}$ 
then $f$ is a bivariant equivalence.  If $X\to A\times B$ and $Y\to A\times B$ are 
bifibrations then the converse is also true.  
\end{lemma}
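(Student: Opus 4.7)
The plan is to handle the two implications separately, both times exploiting the fact from Lemma~\ref{cor:map AxB Kan cplx} that for any bifibration $W\to A\times B$ the simplicial mapping space $\map_{A\times B}(Z,W)$ is a Kan complex, and that $f^*$ is a Kan fibration between such for a bifibration $f$ and a monomorphism in the source variable.

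For the forward direction, I would start from a fiberwise homotopy inverse $g\colon Y\to X$ of $f$, so that there are edges in $\map_{A\times B}(X,X)$ and $\map_{A\times B}(Y,Y)$ witnessing $gf\simeq 1_X$ and $fg\simeq 1_Y$. Because $(\sSet)_{/(A\times B)}$ is simplicially enriched (with its natural structure inherited from $\sSet$), the contravariant functor $\map_{A\times B}(-,W)$ preserves this simplicial structure, so for any bifibration $W\to A\times B$ the induced maps $f^*$ and $g^*$ become mutually inverse homotopy equivalences between the Kan complexes $\map_{A\times B}(Y,W)$ and $\map_{A\times B}(X,W)$. This is exactly the definition of $f$ being a bivariant equivalence.

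For the converse, assume $X\to A\times B$ and $Y\to A\times B$ are bifibrations and $f$ is a bivariant equivalence. Taking $W=X$, the map $f^*\colon \map_{A\times B}(Y,X)\to\map_{A\times B}(X,X)$ is a homotopy equivalence between Kan complexes, hence surjective on $\pi_0$, so there exists $g\colon Y\to X$ in $(\sSet)_{/(A\times B)}$ together with an edge $\alpha\colon gf\to 1_X$ in $\map_{A\times B}(X,X)$. Post-composing $\alpha$ with $f$ yields an edge $fgf\to f$ in $\map_{A\times B}(X,Y)$. Taking $W=Y$, the induced map $f^*\colon \map_{A\times B}(Y,Y)\to\map_{A\times B}(X,Y)$ is also a homotopy equivalence of Kan complexes, hence injective on $\pi_0$; since $f^*[fg]=[fgf]=[f]=f^*[1_Y]$, we conclude $[fg]=[1_Y]$, i.e.\ $fg\simeq 1_Y$. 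Thus $g$ is a fiberwise homotopy inverse of $f$.

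The only point requiring care is in the forward direction: one should confirm that the simplicial enrichment of $(\sSet)_{/(A\times B)}$ really makes a fiberwise homotopy $H\colon X\times\Delta^1\to Y$ transport under $\map_{A\times B}(-,W)$ to an edge between $f^*$ and $g^*$ in $\map_{A\times B}(X,W)$. This is a direct consequence of the tensor/hom adjunction in the enriched slice category and amounts to unwinding definitions; after that the backward direction is a standard $\pi_0$-manipulation of the sort used to promote a weak inverse to a genuine one.
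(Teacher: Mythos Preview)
Your proposal is correct and follows essentially the same route as the paper: the forward direction uses the tensor/hom adjunction $\map_{A\times B}(M\times\Delta^1,W)\cong\map_{A\times B}(M,W)^{\Delta^1}$ to transport fiberwise homotopies, and the converse runs the same $\pi_0$-surjectivity/injectivity argument on $f^*$ with $W=X$ and then $W=Y$. One small slip: in your final paragraph the transported homotopy is not ``an edge in $\map_{A\times B}(X,W)$'' but rather a simplicial homotopy between the maps $f^*,g^*\colon \map_{A\times B}(Y,W)\to\map_{A\times B}(X,W)$; the content is right but the phrasing should be tightened.
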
 

\begin{proof} 
To prove the first statement it suffices to prove that if $h\colon X\times \Delta^1\to Y$ is a 
fiberwise homotopy between maps $f,g\colon X\to Y$ in $(\sSet)_{/(A\times B)}$, then $h$ induces 
a homotopy between the maps $f^*,g^*\colon \map_{A\times B}(Y,Z)\to \map_{A\times B}(X,Z)$ 
for any bifibration $Z\to A\times B$.  This follows easily from the fact that 
$\map_{A\times B}(M\times \Delta^1,Z) = \map_{A\times B}(M,Z)^{\Delta^1}$ for any 
object $M$ in $(\sSet)_{/(A\times B)}$.  

We prove the second statement.  Suppose that $f\colon X\to Y$ is a 
bivariant equivalence between bifibrations.  Observe that 
the map $f^*\colon \map_{A\times B}(Y,X)\to \map_{A\times B}(X,X)$ 
is a homotopy equivalence between Kan complexes and hence there exists 
a map $g\colon Y\to X$ in $(\sSet)_{/(A\times B)}$ and 
an edge $h$ in $\map_{A\times B}(X,X)$ between $gf$ and $1_X$.  
Hence $gf$ and $1_X$ are fiberwise homotopic.  The vertices $fgf$ and $f$ 
belong to the same path component in $\map_{A\times B}(X,Y)$.  Therefore, 
by the assumption on $f$, there exists an edge $k$ in $\map_{A\times B}(Y,Y)$ 
between the vertices $fg$ and $1_Y$.  Hence $fg$ and $1_Y$ are fiberwise homotopic.  Hence 
$f$ is a fiberwise homotopy equivalence.  
\end{proof} 

\begin{lemma} 
\label{lem:triv fibn biv equiv}
A trivial fibration in $(\sSet)_{/(A\times B)}$ is a bivariant equivalence.  
\end{lemma}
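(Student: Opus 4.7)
The plan is to show that any trivial fibration is a fiberwise homotopy equivalence, and then invoke Lemma~\ref{lem:fiberwise hty equiv} to conclude that it is a bivariant equivalence.

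Let $f\colon X\to Y$ be a trivial fibration in $(\sSet)_{/(A\times B)}$. By definition this means $f$ has the right lifting property against every monomorphism in $(\sSet)_{/(A\times B)}$; since monomorphisms in the slice are precisely monomorphisms of the underlying simplicial sets that commute with the structure maps to $A\times B$, this is equivalent to the underlying map of simplicial sets being a trivial Kan fibration. Lifting against $\emptyset\to Y$ produces a section $s\colon Y\to X$ of $f$ in $(\sSet)_{/(A\times B)}$.

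To produce a fiberwise homotopy from $1_X$ to $sf$, view $X\times \Delta^1$ as an object of $(\sSet)_{/(A\times B)}$ via the composite of the first projection with the structure map $X\to A\times B$. The inclusion $X\times \partial\Delta^1 \hookrightarrow X\times \Delta^1$ is then a monomorphism in $(\sSet)_{/(A\times B)}$. Consider the diagram
\[
\begin{tikzcd}
X\times \partial\Delta^1 \arrow[r,"{(1_X,sf)}"] \arrow[d] & X \arrow[d,"f"] \\
X\times \Delta^1 \arrow[r,"f\circ p_1"'] \arrow[ur,dashed,"h"] & Y
\end{tikzcd}
\]
which commutes in $(\sSet)_{/(A\times B)}$ since $fs = 1_Y$. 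A diagonal filler $h$ exists because $f$ is a trivial fibration, and it provides a fiberwise homotopy exhibiting $sf\sim 1_X$. On the other side, $fs = 1_Y$ strictly. Hence $f$ is a fiberwise homotopy equivalence.

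By the first statement of Lemma~\ref{lem:fiberwise hty equiv}, $f$ is a bivariant equivalence. The only delicate point is checking that the square above actually lives in $(\sSet)_{/(A\times B)}$ with the chosen structure map on $X\times \Delta^1$, but this is immediate from the definition of the structure map and the fact that $s$ and $f$ are morphisms over $A\times B$.
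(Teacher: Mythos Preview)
Your proof is correct and takes essentially the same approach as the paper: the paper's proof simply cites Lemma~\ref{lem:fiberwise hty equiv} together with the fact that a trivial fibration in $(\sSet)_{/(A\times B)}$ is a fiberwise homotopy equivalence, and you have spelled out the standard section-plus-lifting argument establishing that fact.
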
 

\begin{proof}
This follows immediately from Lemma~\ref{lem:fiberwise hty equiv}, using 
the fact that a trivial fibration in $(\sSet)_{/(A\times B)}$ is a fiberwise 
homotopy equivalence.  
\end{proof} 

\begin{definition} 
A map $X\to Y$ in $(\sSet)_{/(A\times B)}$ is said to be a bivariant fibration if it has 
the right lifting property against all monic bivariant equivalences.  
\end{definition} 

\begin{proposition} 
\label{prop:char of bifibns between bifibns}
Let $f\colon X\to Y$ be a map in $(\sSet)_{/(A\times B)}$ between bifibrations 
$X\to A\times B$ and $Y\to A\times B$.  Then $f$ is a bifibration 
if and only if $f$ is a bivariant fibration.  
\end{proposition}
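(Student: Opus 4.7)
The easy direction is that every bivariant fibration is a bifibration. The generating classes in Definition~\ref{def:biv anodyne} consist of monomorphisms, and monomorphisms in $(\sSet)_{/(A\times B)}$ form a weakly saturated class, so every bivariant anodyne map is a monomorphism. By Lemma~\ref{lem:biv ano implies biv eq} every bivariant anodyne map is also a bivariant equivalence. Hence the right lifting property against monic bivariant equivalences implies the right lifting property against bivariant anodyne maps.

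For the converse, suppose $f\colon X\to Y$ is a bifibration between bifibrations over $A\times B$, and let $u\colon M\to N$ be a monic bivariant equivalence in $(\sSet)_{/(A\times B)}$. To produce diagonal lifts against $u$, the plan is to show that the Leibniz map
\[
\phi\colon \map_{A\times B}(N,X) \to P := \map_{A\times B}(M,X)\times_{\map_{A\times B}(M,Y)}\map_{A\times B}(N,Y)
\]
is a trivial Kan fibration. A vertex of $P$ is precisely a commutative square from $u$ to $f$, and a lift of such a vertex through $\phi$ is precisely a diagonal filler; surjectivity of $\phi$ on vertices therefore suffices.

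I apply Lemma~\ref{cor:map AxB Kan cplx} three times. First, applied to the bifibration $f$ and the monomorphism $u$, it identifies $\phi$ as a Kan fibration between Kan complexes. Second, applied to the bifibrations $X\to A\times B$ and $Y\to A\times B$ together with $u$, it shows that the restriction maps $u^*\colon \map_{A\times B}(N,X)\to \map_{A\times B}(M,X)$ and $u^*\colon \map_{A\times B}(N,Y)\to \map_{A\times B}(M,Y)$ are Kan fibrations between Kan complexes. Since $u$ is a bivariant equivalence these two restrictions are homotopy equivalences, hence trivial Kan fibrations. The projection $P\to \map_{A\times B}(M,X)$ is a pullback of the second one, and so is itself a trivial Kan fibration.

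Finally, the commutative triangle
\[
\begin{tikzcd}[column sep=small]
\map_{A\times B}(N,X) \arrow[rr,"\phi"] \arrow[dr,"u^*"'] & & P \arrow[dl] \\
& \map_{A\times B}(M,X) &
\end{tikzcd}
\]
has its hypotenuse and its right leg both weak equivalences, so $\phi$ is a weak equivalence by 2-out-of-3 for homotopy equivalences of Kan complexes; combined with its being a Kan fibration, $\phi$ is a trivial Kan fibration, which completes the argument. There is no serious obstacle here: the only real content is recognizing that Lemma~\ref{cor:map AxB Kan cplx} is powerful enough to make both $\phi$ and the two absolute restriction maps into Kan fibrations simultaneously, after which the proof reduces to a routine 2-out-of-3 computation.
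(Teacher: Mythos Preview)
Your proof is correct and follows essentially the same route as the paper's: both directions use Lemma~\ref{lem:biv ano implies biv eq} for the easy implication, and for the converse both reduce to showing that the Leibniz map $\phi$ is a trivial Kan fibration by combining Lemma~\ref{cor:map AxB Kan cplx} with the fact that the two absolute restriction maps along $u$ are trivial Kan fibrations. The only difference is that you spell out the 2-out-of-3 triangle explicitly, whereas the paper leaves that final deduction implicit.
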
 

\begin{proof} 
If $f\colon X\to Y$ is a bivariant fibration then it has the right lifting property 
against every bivariant anodyne map in $(\sSet)_{/(A\times B)}$ since a bivariant anodyne map 
is a bivariant equivalence (Lemma~\ref{lem:biv ano implies biv eq}).  

We prove the converse.  Suppose that $f\colon X\to Y$ has the right lifting property 
against every bivariant anodyne map and that $X\to A\times B$, $Y\to A\times B$ are 
bifibrations.  Let $M\to N$ be a monic bivariant equivalence.  We need to show 
that we can find the indicated diagonal filler in any commutative diagram of the form 
\[
\begin{tikzcd} 
M \arrow[r] \arrow[d] & X \arrow[d,"f"] \\ 
N \ar[r] \ar[ur,dashed] & Y 
\end{tikzcd} 
\]
From such a diagram we obtain the commutative diagram 
\begin{equation} 
\label{eq:3.17 one}
\begin{tikzcd} 
\map_{A\times B}(N,X) \arrow[d] \arrow[r] & \map_{A\times B}(N,Y) \arrow[d] \\ 
\map_{A\times B}(M,X) \arrow[r] & \map_{A\times B}(M,Y) 
\end{tikzcd} 
\end{equation}
in which each of the horizontal 
maps and vertical maps are Kan fibrations between 
Kan complexes by Lemma~\ref{cor:map AxB Kan cplx}.  
Since $M\to N$ is a bivariant equivalence, 
the vertical maps are in fact trivial Kan fibrations.  
It follows from Lemma~\ref{cor:map AxB Kan cplx} that the induced map 
\[
\map_{A\times B}(N,X) \to 
\map_{A\times B}(M,X) 
\times_{ \map_{A\times B}(M,Y)} 
\map_{A\times B}(N,Y) 
\]
is a trivial Kan fibration.  In particular it is surjective 
on vertices which implies the existence 
of the sought-after diagonal filler in the 
diagram above.  
\end{proof}

\begin{proposition} 
\label{prop:biv eqs access}
Let $A$ and $B$ be simplicial sets.  The subcategory of bivariant equivalences 
in the category of morphisms $((\sSet)_{/(A\times B)})^{[1]}$ is an accessible subcategory.  
\end{proposition}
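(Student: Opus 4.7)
The plan is to apply Corollary A.2.6.12 of \cite{HTT} by exhibiting the class of bivariant equivalences as the preimage of an accessible class under an accessible functor. First I would apply the small object argument to the (small) set of generating bivariant anodyne morphisms from Definition~\ref{def:biv anodyne}, obtaining a functorial factorization of each structure map $X \to A \times B$ as $X \to RX \to A \times B$ with $X \to RX$ bivariant anodyne and $RX \to A \times B$ a bifibration in the sense of Definition~\ref{def:general bifibn}. For $\kappa$ sufficiently large the resulting endofunctor $R$ of $(\sSet)_{/(A \times B)}$ preserves $\kappa$-filtered colimits, and hence so does the induced endofunctor $R^{[1]}$ of $\Arr((\sSet)_{/(A \times B)})$.

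Next I would use two-out-of-three for bivariant equivalences (which is immediate from the definition together with two-out-of-three for homotopy equivalences of Kan complexes) combined with Lemma~\ref{lem:biv ano implies biv eq}, which implies that each unit $X \to RX$ is itself a bivariant equivalence, to conclude that $f\colon X \to Y$ is a bivariant equivalence if and only if $R^{[1]}(f)\colon RX \to RY$ is. By Lemma~\ref{lem:fiberwise hty equiv}, the latter condition is equivalent to $R^{[1]}(f)$ being a fiberwise homotopy equivalence between bifibrations.

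The key step would then be the following fiberwise characterization: a map $g\colon X' \to Y'$ between bifibrations over $A \times B$ is a fiberwise homotopy equivalence if and only if, for every vertex $(a,b) \in A \times B$, the induced map $g_{(a,b)}\colon X'_{(a,b)} \to Y'_{(a,b)}$ on fibers is a weak homotopy equivalence (the fibers are Kan complexes by Lemma~\ref{lem:fibers of bifib kan}, so weak equivalence and homotopy equivalence coincide there). Each fiber functor $F_{(a,b)}\colon (\sSet)_{/(A \times B)} \to \sSet$ is accessible, being given by pullback along a vertex inclusion, and the class $W$ of weak homotopy equivalences of simplicial sets is accessible. The class of bivariant equivalences is therefore the intersection, over the set of vertices $(a,b)$ of $A \times B$, of the preimages $(F_{(a,b)} \circ R^{[1]})^{-1}(W)$; as an intersection of a set-indexed family of accessible subcategories, this is accessible.

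The hard part will be establishing the fiberwise characterization invoked in the third step. This is the bifibration analogue of Proposition 2.1.3.1 of \cite{HTT} for left fibrations, and I would prove it by a similar method, using the stability of bifibrations under exponentiation (Proposition~\ref{prop:exp}) together with Lemma~\ref{cor:map AxB Kan cplx} to control the simplicial mapping spaces $\map_{A \times B}(-, Z)$ into bifibrations and reduce the global condition to the fiberwise one.
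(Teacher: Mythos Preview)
Your approach is correct but differs from the paper's. The paper first proves, via a short retract argument, that a map which is simultaneously a bivariant fibration and a bivariant equivalence is a trivial Kan fibration. It then applies the small object argument to produce a functor $T$ on arrows sending $f$ to a bifibration $T(f)$ between bifibrations, and observes that $f$ is a bivariant equivalence if and only if $T(f)$ is a trivial Kan fibration; accessibility then follows immediately since trivial Kan fibrations form an accessible subcategory. Your route instead reduces to the pointwise fiberwise criterion, which is essentially Theorem~\ref{thm:pointwise biveq} (proved later in the paper via Proposition~\ref{prop:fibers contractible}); there is no circularity since neither of those results depends on Proposition~\ref{prop:biv eqs access}, but you are invoking substantially more machinery than needed. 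The paper's argument buys you accessibility with only Lemma~\ref{lem:triv fibn biv equiv} and a one-line retract trick, whereas your argument front-loads the analogue of Lemma 2.1.3.4 of \cite{HTT} for bifibrations and then has to assemble accessibility from a set-indexed family of fiber functors rather than from a single accessible class.
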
 

\begin{proof} 
We first prove that if a map $f\colon X\to Y$ in $(\sSet)_{/(A\times B)}$ is a bivariant 
fibration and a bivariant equivalence then it is a trivial fibration.  Given such a map 
$f$, we factor it as $f = pi$ where $i\colon X\to X'$ is a monomorphism and 
where $p\colon X'\to Y$ is a trivial fibration in $(\sSet)_{/(A\times B)}$.  Then we have 
a commutative diagram 
\[
\begin{tikzcd} 
X \arrow[d,"i"'] \arrow[r,"1_X"] & X \arrow[d,"f"] \\ 
X' \arrow[r,"p"'] \arrow[ur,dashed] & Y 
\end{tikzcd} 
\]
By Lemma~\ref{lem:triv fibn biv equiv}, the map $p$ is a bivariant equivalence, 
hence $i$ is a bivariant equivalence by 2-out-of-3.  Therefore the indicated 
diagonal filler exists, and hence $f$ is a retract of a trivial fibration.  
It follows that $f$ is a trivial fibration.  

The remainder of the proof proceeds in exactly the same fashion 
as the proof of Corollary A.2.6.6 of \cite{HTT}; the small object argument 
shows the existence of a functor $T\colon ((\sSet)_{/(A\times B)})^{[1]}
\to ((\sSet)_{/(A\times B)})^{[1]}$ together with a natural transformation 
$1\to T$ such that for any morphism $f\colon X\to Y$ in $(\sSet)_{/(A\times B)}$, in  
the diagram 
\[
\begin{tikzcd} 
X \arrow[d] \arrow[r,"f"] & Y \arrow[d] \\ 
T(X) \arrow[r,"T(f)"] & T(Y) 
\end{tikzcd} 
\]
the vertical maps are bivariant anodyne maps in $(\sSet)_{/(A\times B)}$, 
$T(f)\colon T(X)\to T(Y)$, $T(X)\to A\times B$ and $T(Y)\to A\times B$ 
are bifibrations.  Therefore $T(f)$ is a bivariant fibration.  
It follows that $f$ is a bivariant equivalence if and only if $T(f)$ is a trivial Kan fibration.  
Hence the bivariant equivalences in $((\sSet)_{/(A\times B)})^{[1]}$ form an accessible subcategory since 
the trivial Kan fibrations form an accessible subcategory.   
\end{proof} 

\begin{theorem} 
\label{thm:biv model str}
Let $A$ and $B$ be simplicial sets.  Then there is the structure of a left proper, combinatorial 
model category on $(\sSet)_{/(A\times B)}$ for which 
\begin{itemize} 
\item the class of cofibrations is the class of monomorphisms in $(\sSet)_{/(A\times B)}$; 
\item the class of fibrations is the class of bivariant fibrations in $(\sSet)_{/(A\times B)}$.  
\end{itemize} 
The fibrant objects are precisely the bifibrations $X\to A\times B$.  
\end{theorem}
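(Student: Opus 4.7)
The plan is to apply Smith's recognition theorem (Proposition A.2.6.13 of \cite{HTT}), exactly as in the proof of Theorem~\ref{thm:model str for corr}. The category $(\sSet)_{/(A\times B)}$ is presentable as the slice of a presentable category, and the class $C$ of monomorphisms is generated as a weakly saturated class by the set of boundary inclusions $\partial\Delta^n\hookrightarrow \Delta^n$ in $(\sSet)_{/(A\times B)}$. Let $W$ denote the class of bivariant equivalences.

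First I would verify the three hypotheses of \cite{HTT}~A.2.6.13. For (1), that $W$ is perfect, the 2-out-of-3 property and closure under retracts follow immediately from the definition via the $\map_{A\times B}(-,X)$ functors, and accessibility is precisely Proposition~\ref{prop:biv eqs access}; closure under filtered colimits is obtained by the same functorial construction $T$ used there, since $f\in W$ iff $T(f)$ is a trivial Kan fibration, and the class of trivial Kan fibrations in $(\sSet)_{/(A\times B)}$ is perfect. For (2), stability of $W$ under pushouts by cofibrations, suppose $X\to Y$ is a bivariant equivalence and $X\to X'$ is a monomorphism, and form the pushout $Y':= X'\cup_X Y$. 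For each bifibration $Z\to A\times B$, applying $\map_{A\times B}(-,Z)$ converts the pushout square into a pullback square in which both maps out of the lower-right corner are Kan fibrations between Kan complexes by Lemma~\ref{cor:map AxB Kan cplx}, and the map $\map_{A\times B}(Y,Z)\to \map_{A\times B}(X,Z)$ is a trivial Kan fibration. Right properness of the Kan--Quillen model structure then forces the pulled-back map $\map_{A\times B}(Y',Z)\to \map_{A\times B}(X',Z)$ to be a weak equivalence, so $X'\to Y'$ lies in $W$. For (3), a map with the right lifting property against every monomorphism is a trivial Kan fibration in $\sSet$, hence a fiberwise homotopy equivalence, hence a bivariant equivalence by Lemma~\ref{lem:triv fibn biv equiv}.

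It remains to identify the fibrant objects. An object $X\in (\sSet)_{/(A\times B)}$ is fibrant precisely when $X\to A\times B$ has the right lifting property against every monic bivariant equivalence. If $X\to A\times B$ is a bifibration, then for any monic bivariant equivalence $M\to N$ the induced map
\[
\map_{A\times B}(N,X)\to \map_{A\times B}(M,X)
\]
is a Kan fibration between Kan complexes by Lemma~\ref{cor:map AxB Kan cplx} and a homotopy equivalence by the definition of bivariant equivalence; hence it is a trivial Kan fibration, and in particular surjective on vertices, producing the desired diagonal filler. Conversely, any fibrant object has the right lifting property against all bivariant anodyne maps, since these are monic bivariant equivalences by Lemma~\ref{lem:biv ano implies biv eq}, and is therefore a bifibration. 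Left properness of the resulting model structure is immediate from hypothesis (2) verified above, and combinatoriality follows because $C$ is generated by a set and $W$ is accessible.

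The only nontrivial point is the perfectness of $W$: 2-out-of-3, retracts and accessibility are easy or already established, and closure under filtered colimits is the one property that could cause a snag, but the functor $T$ from the proof of Proposition~\ref{prop:biv eqs access} reduces it to the analogous statement for trivial Kan fibrations, which is standard. Everything else in the proof is either a direct appeal to the lemmas already in place or a formal consequence of Smith's theorem.
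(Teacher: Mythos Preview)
Your proof is correct, but it takes a different route from the paper's. You invoke Proposition~A.2.6.13 of \cite{HTT} (the ``perfect class'' form of Smith's theorem), mirroring the proof of Theorem~\ref{thm:model str for corr}; the paper instead applies Proposition~A.2.6.8 of \cite{HTT}, whose hypotheses are organized differently. Rather than verifying that $W$ is perfect and stable under pushout along cofibrations, the paper checks directly that any map with the left lifting property against all bivariant fibrations is a monic bivariant equivalence, via a factorization-and-retract argument using Proposition~\ref{prop:char of bifibns between bifibns}. Your route trades that argument for the pushout-stability check and the upgrade from accessibility to perfectness; the latter is the only delicate point, and your reduction through the functor $T$ from Proposition~\ref{prop:biv eqs access} is valid once one observes that $T$ preserves filtered colimits (the generating bivariant anodyne maps have compact domains and codomains, so the small object argument runs at $\omega$). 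Your identification of the fibrant objects is essentially the argument of Proposition~\ref{prop:char of bifibns between bifibns} specialized to $Y=A\times B$. One small imprecision: in the pullback square obtained by applying $\map_{A\times B}(-,Z)$, only the map $\map_{A\times B}(X',Z)\to\map_{A\times B}(X,Z)$, induced by the monomorphism $X\to X'$, is guaranteed by Lemma~\ref{cor:map AxB Kan cplx} to be a Kan fibration, not both legs; but one fibration is exactly what right properness requires, so the conclusion stands.
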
 

Following Joyal we call this model structure on $(\sSet)_{/(A\times B)}$ the {\em bivariant} model 
structure on $(\sSet)_{/(A\times B)}$.  

\begin{proof} 
We use Proposition A.2.6.8 from \cite{HTT}.  The category $(\sSet)_{/(A\times B)}$ is presentable, so therefore 
we need to verify the conditions (1)--(5) from the statement of that proposition.  The conditions (1) and (4) 
are clear; the condition (3) is Proposition~\ref{prop:biv eqs access} and condition (5) follows from 
Lemma~\ref{lem:triv fibn biv equiv}. Therefore it remains to prove that condition (2) holds.  It suffices to prove 
that if $i\colon M\to N$ is a morphism in $(\sSet)_{/(A\times B)}$ which has the left lifting property against every 
bivariant fibration in $(\sSet)_{/(A\times B)}$, then $i$ is a monic bivariant equivalence.  Via the small object argument 
we can find a commutative diagram 
\[
\begin{tikzcd} 
M \arrow[d,"i"'] \arrow[r,"f"] & X \arrow[d,"p"] \\ 
N \arrow[r,"g"'] \arrow[ur,dashed,"d"] & Y 
\end{tikzcd} 
\]
in which $f$ and $g$ are bivariant anodyne maps, and $p\colon X\to Y$, $X\to A\times B$, and $Y\to A\times B$ 
are bifibrations.  It follows that $p$ is a bivariant fibration (Proposition~\ref{prop:char of bifibns between bifibns}) 
and hence the indicated diagonal filler $d\colon N\to X$ exists.  Since $di = f$ is monic it follows that $i$ is monic.    
Since $f$ and $g$ are bivariant equivalences, $d$ is also a bivariant equivalence (this follows 
easily from the fact that if $u$ is a map of Kan complexes such that there exist maps $v$ and $w$ such that 
$vu$ and $uw$ are homotopy equivalences, then $u$ is also a homotopy equivalence).  
Thus the class of monic bivariant equivalences forms a weakly saturated class, completing the 
verification of condition (2).  The result follows.  
\end{proof} 

\subsection{Comparison with the covariant and contravariant model structures}
\label{subsec:comparison}
In this section we 
study some relationships between the bivariant 
model structure on $(\sSet)_{/(A\times B)}$ and 
the contravariant model structure on $(\sSet)_{/A}$ 
and the covariant model structure on $(\sSet)_{/B}$.  

Let $\pi_A\colon A\times B\to A$ and 
$\pi_B\colon A\times B\to B$ be the 
canonical projections.  Observe that 
the functor $\pi_A^*\colon (\sSet)_{/A}\to 
(\sSet)_{/(A\times B)}$ admits a left 
adjoint $(\pi_A)_!$ and a right adjoint 
$(\pi_A)_*$.  Similarly the functor 
$\pi_B^*\colon (\sSet)_{/B}\to (\sSet)_{/(A\times B)}$ 
admits a left adjoint $(\pi_B)_!$ and a 
right adjoint $(\pi_B)_*$.  
We have the following proposition.  

\begin{proposition} 
\label{prop:biv eqs and co and cont eqs}
Let $A$ and $B$ be simplicial sets.  Then the following statements are true: 
\begin{enumerate}
\item The adjunction 
\[
(\pi_{B})_!\colon (\sSet)_{/(A\times B)} \rightleftarrows (\sSet)_{/B}\colon \pi_{B}^* 
\]
is a Quillen adjunction for the bivariant model structure on $(\sSet)_{/(A\times B)}$ 
and the contravariant model structure on $(\sSet)_{/B}$; 
\item The adjunction 
\[
(\pi_{A})_!\colon (\sSet)_{/(A\times B)} \rightleftarrows (\sSet)_{/A}\colon \pi_{A}^* 
\]
is a Quillen adjunction for the bivariant model structure on $(\sSet)_{/(A\times B)}$ 
and the covariant model structure on $(\sSet)_{/A}$.  
\end{enumerate}   
\end{proposition}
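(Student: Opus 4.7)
The plan is, in each case, to verify that the left adjoint preserves cofibrations and trivial cofibrations. Preservation of monomorphisms is immediate in both statements, since the functors $(\pi_B)_!$ and $(\pi_A)_!$ act on underlying simplicial sets as the identity; they merely replace the structure map to $A \times B$ by its composition with the relevant projection.

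For the preservation of trivial cofibrations, the strategy is to refine the set-level adjunction to a simplicial identification of mapping spaces. I would check directly from the pullback defining $\map_{A \times B}(-,-)$ (Notation~\ref{not:simp mapping spaces}) that for every $X \in (\sSet)_{/(A \times B)}$ and every $R \in (\sSet)_{/B}$ there is a natural isomorphism
\[
\map_B((\pi_B)_! X, R) \simeq \map_{A \times B}(X, \pi_B^* R),
\]
where $\pi_B^* R = A \times R$ is regarded as an object of $(\sSet)_{/(A \times B)}$ via $(\pi_A, q_R \circ \pi_R)$. The verification is a straightforward diagram chase: an $n$-simplex on either side unpacks to a map $X \times \Delta^n \to A \times R$ over $A \times B$, which is determined by its $R$-component.

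The key input is then Remark~\ref{rem:examples of bifibrations}: if $R \to B$ is a right fibration, then $\pi_B^* R = A \times R \to A \times B$ is a bifibration. Consequently, if $f \colon X \to Y$ is a bivariant equivalence, then $\map_{A \times B}(Y, \pi_B^* R) \to \map_{A \times B}(X, \pi_B^* R)$ is a homotopy equivalence for every right fibration $R \to B$, which via the above identification says exactly that $(\pi_B)_! f$ is a contravariant equivalence. Combined with the preservation of monomorphisms, this shows $(\pi_B)_!$ is left Quillen for the contravariant model structure on $(\sSet)_{/B}$, proving (1).

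For (2), the argument is entirely symmetric: the dual half of Remark~\ref{rem:examples of bifibrations} says that if $L \to A$ is a left fibration, then $\pi_A^* L = L \times B \to A \times B$ is a bifibration, and the analogous simplicial identification $\map_A((\pi_A)_! X, L) \simeq \map_{A \times B}(X, \pi_A^* L)$ shows that $(\pi_A)_!$ sends bivariant equivalences to covariant equivalences. No step is genuinely difficult; the only piece requiring attention is the simplicial enrichment of the base-change adjunction, which is routine given the explicit description of the mapping spaces as pullbacks.
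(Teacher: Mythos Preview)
Your proof is correct but takes a different route from the paper's. The paper works on the right adjoint side: it observes that $\pi_B^*$ preserves trivial Kan fibrations, and then uses the criterion that a right adjoint is Quillen if it additionally preserves fibrations between fibrant objects. For the latter it invokes Proposition~\ref{prop:char of bifibns between bifibns} (bivariant fibrations between bifibrations coincide with bifibrations) together with Remark~\ref{rem:examples of bifibrations}, and then verifies by hand that $A\times X \to A\times Y$ is a bifibration when $X\to Y$ is a right fibration between right fibrations --- the only nontrivial step being to fill $\Lambda^n_0$-horns lying over a degenerate edge in $B$, which requires a short slice-category argument.

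Your approach instead works on the left adjoint side and is arguably cleaner: by upgrading the adjunction to a simplicial one and appealing only to Remark~\ref{rem:examples of bifibrations}, you show that $(\pi_B)_!$ sends \emph{all} bivariant equivalences to contravariant equivalences, not just the monic ones. This avoids Proposition~\ref{prop:char of bifibns between bifibns} and the horn-filling computation entirely. The cost is the (easy) verification of the simplicial identification $\map_B((\pi_B)_!X,R)\simeq \map_{A\times B}(X,\pi_B^*R)$, which, as you note, is a routine diagram chase. Both proofs ultimately rest on the same key fact that $\pi_B^*$ of a right fibration is a bifibration; yours extracts the consequence more directly from the mapping-space definitions of the weak equivalences.
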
 

\begin{proof} 
We prove statement (1), the proof of statement (2) follows by duality.  
It is clear that $\pi_{B}^*$ sends trivial Kan fibrations in $(\sSet)_{/B}$ to 
trivial Kan fibrations in $(\sSet)_{/(A\times B)}$.  Therefore it suffices 
by Proposition~\ref{prop:char of bifibns between bifibns} 
and Remark~\ref{rem:examples of bifibrations}
to prove that if $X\to Y$ is a right fibration in $(\sSet)_{/B}$ between 
right fibrations $X\to B$ and $Y\to B$, then $A\times X\to A\times Y$ is a 
bifibration in $(\sSet)_{/(A\times B)}$.  Clearly $A\times X\to A\times Y$ satisfies (1) and 
(3) of Remark~\ref{rem:char of bifibrations}, therefore it suffices 
to prove that the indicated diagonal filler exists in 
every diagram of the form 
\[
\begin{tikzcd} 
\Lambda^{n}_{0} \arrow[d] \arrow[r] & A\times X \arrow[d] \\ 
\Delta^{n} \arrow[r] \arrow[dr,"{(f,g)}"'] \arrow[ur,dashed] & A\times Y \arrow[d] \\ 
& A\times B 
\end{tikzcd} 
\]
in which $g\colon \Delta^{\set{0,1}}\to B$ 
is a degenerate edge of $B$. 
Clearly it suffices to show that the indicated diagonal filler exists 
in the induced diagram 
\begin{equation} 
\label{eq:3.19 one} 
\begin{tikzcd} 
\Lambda^{n}_{0} \arrow[d] \arrow[r,"u"] & X \arrow[d] \\ 
\Delta^{n} \arrow[r,"v"] \arrow[ur,dashed] & Y 
\end{tikzcd} 
\end{equation} 
in $(\sSet)_{/B}$.    
If $n=1$ then the indicated diagonal filler in~\eqref{eq:3.19 one} 
exists since the arrow $v\colon \Delta^1\to Y$ factors 
through the Kan complex $Y_{g(n)}$.      
If $n\geq 2$, an argument analogous 
to the one used in Remark~\ref{rem:examples of bifibrations} 
gives the existence of the required 
diagonal filler.  
\end{proof}

\begin{remark} 
There is a canonical functor $(\sSet)_{/A}\times (\sSet)_{/B}\to (\sSet)_{/(A\times B)}$ 
defined on objects by sending a pair $(S,T)$ to the product 
$S\times T\to A\times B$.  This functor is {\em divisible on the right and left} 
(see Section 7 of \cite{JT1}).  If $T\to B$ is an object 
of $(\sSet)_{/B}$, then we denote the functor right adjoint 
to $(-)\times T\colon (\sSet)_{/A}\to (\sSet)_{/(A\times B)}$ by 
$(-)/T\colon (\sSet)_{/(A\times B)}\to (\sSet)_{/A}$.  The functor 
$(-)/T$ is defined on objects as follows.  If $X\to A\times B$ 
is an object of $(\sSet)_{/(A\times B)}$, then $X/T$ is 
defined by the pullback diagram 
\[
\begin{tikzcd} 
X/T \arrow[r] \arrow[d] & X^T \arrow[d] \\ 
A \arrow[r] & A^T\times B^T, 
\end{tikzcd} 
\]
where the map $A\to A^T\times B^T$ is isomorphic to the product 
of the diagonal map $A\to A^T$ and the 
constant map $\Delta^0\to B^T$ given by the structure map 
$T\to B$.   
\end{remark} 

\begin{remark} 
\label{rem:canonical map from divisions}
Suppose that $X\to Y$ is a map in $(\sSet)_{/(A\times B)}$ 
and that $S\to T$ is a map in $(\sSet)_{/B}$.  
There is a canonical commutative diagram in 
$(\sSet)_{/A}$ of the form 
\[
\begin{tikzcd} 
X/T \arrow[r] \arrow[d] & Y/T \arrow[d] \\ 
X/S \arrow[r] & Y/S 
\end{tikzcd} 
\]
and an induced map 
\[
X/T \to X/S\times_{Y/S}Y/T 
\]
in $(\sSet)_{/A}$.  
\end{remark} 

The following lemma gives a sufficient 
criterion for the induced map from 
Remark~\ref{rem:canonical map from divisions} 
to be a right fibration.  

\begin{lemma} 
\label{rem:divisible exp}
Let $X\to Y$ be a bifibration in $(\sSet)_{/(A\times B)}$ and 
$S\to T$ be a monomorphism in $(\sSet)_{/B}$.  Then the 
induced map 
\[
X/T \to X/S\times_{Y/S}Y/T 
\]
is a left fibration in $(\sSet)_{/A}$.
\end{lemma}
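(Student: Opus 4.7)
The strategy is a lifting argument. By definition, the map $X/T \to X/S \times_{Y/S} Y/T$ is a left fibration in $(\sSet)_{/A}$ if and only if it has the right lifting property against every left anodyne map $K \to L$ in $(\sSet)_{/A}$. Invoking the divisible adjunction $(-)\times T \dashv (-)/T$ (in the sense of Section~7 of \cite{JT1}) together with the analogous one for $S \to T$, such a lifting problem transposes to the requirement that $X \to Y$ lift against the pushout-product
\[
u\colon (K \times T) \cup_{K \times S}(L \times S) \longrightarrow L \times T
\]
in $(\sSet)_{/(A \times B)}$, whose structure map on $L \times T$ is the product of the given structure maps. Since $X \to Y$ is a bifibration by hypothesis, it suffices to show that $u$ is bivariant anodyne whenever $K \to L$ is left anodyne over $A$ and $S \to T$ is a monomorphism over $B$.

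I would then apply the usual two-variable saturation argument. Fixing $S \to T$, the class of maps $K \to L$ in $(\sSet)_{/A}$ for which $u$ is bivariant anodyne is weakly saturated, so one reduces to the generators of left anodyne maps, namely the horn inclusions $\Lambda^n_0 \hookrightarrow \Delta^n$ for $n \geq 1$ with arbitrary structure map $f\colon \Delta^n \to A$. Saturation in the other variable then reduces $S \to T$ to a boundary inclusion $\partial\Delta^m \hookrightarrow \Delta^m$ with arbitrary $h\colon \Delta^m \to B$. The problem becomes to show that
\[
v_{n,m}\colon (\Lambda^n_0 \times \Delta^m) \cup_{\Lambda^n_0 \times \partial\Delta^m}(\Delta^n \times \partial\Delta^m) \longrightarrow \Delta^n \times \Delta^m,
\]
equipped with the structure map $(f\circ p_1, h\circ p_2)$, is bivariant anodyne.

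For $n = 1$ one has $\Lambda^1_0 = \Delta^{\set{0}}$, so $v_{1,m}$ is precisely the map $\set{0} \times \Delta^m \cup \Delta^1 \times \partial\Delta^m \hookrightarrow \Delta^1 \times \Delta^m$, an instance of class~($2''$) of Proposition~\ref{prop:alt desc biv anodyne}; the required degeneracy is automatic because on each $\Delta^1 \times \set{v}$ the $B$-component $h\circ p_2$ is constant at $h(v)$. For $n \geq 2$ the plan is to build $v_{n,m}$ from its source by the standard shuffle filtration: the nondegenerate $(n+m)$-simplices of $\Delta^n \times \Delta^m$ are indexed by monotone lattice paths from $(0,0)$ to $(n,m)$, and by a careful ordering each attachment can be realised as filling a single horn $\Lambda^{n+m}_k \hookrightarrow \Delta^{n+m}$. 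The critical structural observations are: no outer horn at the top vertex ever occurs, because the face $d_{n+m}\sigma$ always lies in the starting subcomplex (if the last step of $\sigma$ is horizontal then the first-coordinate projection of $d_{n+m}\sigma$ misses vertex $n$, so $d_{n+m}\sigma \in \Lambda^n_0 \times \Delta^m$; if it is vertical then the second-coordinate projection misses vertex $m$, so $d_{n+m}\sigma \in \Delta^n \times \partial\Delta^m$); and whenever the outer horn $\Lambda^{n+m}_0$ does arise, $\sigma$ must begin with a horizontal step (else $d_0\sigma$ would already lie in $\Delta^n \times \partial\Delta^m$), so the edge $\Delta^{\set{0,1}}$ of $\sigma$ projects to a constant in $\Delta^m$ and its image in $B$ via $h\circ p_2$ is therefore a degenerate edge. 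Every attachment is thus a pushout of a horn belonging either to class~(1) or to class~(2) of Definition~\ref{def:biv anodyne}, so $v_{n,m}$ is bivariant anodyne.

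The main obstacle is the combinatorial bookkeeping required to produce an ordering of the shuffles in which each attachment is realised as a single horn-filling of the claimed type. This is the classical shuffle argument underlying the proofs of Lemma~2.1.2.3 and Proposition~2.1.2.6 of \cite{HTT}, and the only extra input needed here is the observation made above that every outer horn arising in the filtration has its initial edge pointing in the $\Delta^n$-direction, which ensures the degeneracy hypothesis in Definition~\ref{def:biv anodyne}(2) is automatically satisfied.
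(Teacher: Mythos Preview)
Your approach is sound but takes a different route from the paper's, and contains one small slip worth flagging.

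The paper's proof is a two-line reduction to earlier results: by Proposition~\ref{prop:exp} the map $X^T\to X^S\times_{Y^S}Y^T$ is a bifibration in $(\sSet)_{/(A^T\times B^T)}$, and the map in question is its pullback along $A\to A^T\times B^T$ (diagonal into $A^T$, constant at the structure map $T\to B$ into $B^T$); since that pullback factors through the fibre over a single vertex of $B^T$, Remark~\ref{rem:restriction of 2-sided fibn} gives a left fibration. Your adjunction-and-pushout-product argument is instead a direct attack, essentially re-deriving the relevant instance of Proposition~\ref{prop:exp} by hand. One correction: the left anodyne maps are generated by all $\Lambda^n_k\hookrightarrow\Delta^n$ with $0\le k<n$, not just $k=0$; the cases $0<k<n$ yield inner anodyne pushout-products and are immediate from class~(1) of Definition~\ref{def:biv anodyne}, but should be mentioned. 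More substantially, the shuffle filtration is unnecessary labour: if you reduce instead to the alternative generators $\{0\}\times L'\cup\Delta^1\times K'\hookrightarrow\Delta^1\times L'$ of the left anodyne class (Proposition~2.1.2.6 of \cite{HTT}), the pushout-product with $S\to T$ becomes
\[
\{0\}\times(L'\times T)\cup\Delta^1\times(K'\times T\cup L'\times S)\hookrightarrow\Delta^1\times(L'\times T),
\]
which is directly an instance of class~($2''$) in Proposition~\ref{prop:alt desc biv anodyne}, the $B$-component of the structure map factoring through the projection to $T$. This bypasses the shuffle combinatorics entirely. The paper's modular route has the advantage of isolating the exponential-stability statement (Proposition~\ref{prop:exp}) for reuse elsewhere; your direct route would be appropriate if one wished to avoid setting up that machinery first.
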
  

\begin{proof}
This follows from 
Remark~\ref{rem:restriction of 2-sided fibn}, using the 
fact that 
\[
X^T\to X^S\times_{Y^S} Y^T 
\]
is a bifibration in $(\sSet)_{/(A^T\times B^T)}$ 
by Proposition~\ref{prop:exp}.  
\end{proof} 

As an application of this lemma we have the following useful 
proposition. 

\begin{proposition} 
Let $T\to B$ be an object of $(\sSet)_{/B}$.  Then the adjoint pair 
\[
(-)\times T \colon (\sSet)_{/A}\rightleftarrows 
(\sSet)_{/(A\times B)}\colon (-)/T 
\]
is a Quillen adjunction for the covariant model 
structure on $(\sSet)_{/A}$ and the bivariant model 
structure on $(\sSet)_{/(A\times B)}$.  
\end{proposition}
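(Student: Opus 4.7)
The plan is to verify that the left adjoint $(-)\times T$ is a left Quillen functor, i.e.\ that it preserves cofibrations and trivial cofibrations. Preservation of cofibrations is immediate: a monomorphism $X\hookrightarrow Y$ in $(\sSet)_{/A}$ yields a monomorphism $X\times T\hookrightarrow Y\times T$ in $(\sSet)_{/(A\times B)}$.

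The substance lies in preservation of trivial cofibrations, and this rests on two observations. First, for any $M\in(\sSet)_{/A}$ and any $Z\in(\sSet)_{/(A\times B)}$, there is a natural isomorphism of simplicial mapping spaces
\[
\map_{A\times B}(M\times T, Z) \;\cong\; \map_A(M, Z/T),
\]
which I would verify by unwinding definitions at the level of $n$-simplices: an $n$-simplex on the left is a map $\Delta^n\times M\times T\to Z$ over $A\times B$, and by the ordinary adjunction $(-)\times T\dashv (-)/T$ this corresponds to an $n$-simplex $\Delta^n\times M\to Z/T$ on the right. Second, whenever $Z\to A\times B$ is a bifibration, the map $Z/T\to A$ is a left fibration: apply Lemma~\ref{rem:divisible exp} to the bifibration $Z\to A\times B$ (viewed as a morphism in $(\sSet)_{/(A\times B)}$ to the terminal object) and the monomorphism $\emptyset\hookrightarrow T$ in $(\sSet)_{/B}$, observing that a direct calculation from the defining pullback gives $Z/\emptyset\cong A$ and $(A\times B)/T\cong A$, so the conclusion of the lemma reduces to the statement that $Z/T\to A$ is a left fibration.

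Combining these, suppose $f\colon X\to Y$ is a monic covariant equivalence in $(\sSet)_{/A}$ and let $Z\to A\times B$ be an arbitrary bifibration. The induced map $\map_{A\times B}(Y\times T, Z)\to \map_{A\times B}(X\times T, Z)$ is identified with $\map_A(Y, Z/T)\to \map_A(X, Z/T)$, and the latter is a weak homotopy equivalence because $Z/T\to A$ is a left fibration and $f$ is a covariant equivalence. Hence $X\times T\to Y\times T$ is a bivariant equivalence, and being monic, it is a trivial cofibration. I do not anticipate a serious obstacle: the technical content has already been packaged into Lemma~\ref{rem:divisible exp}, and the remainder is a routine tracking of the simplicial adjunction through the definition of bivariant equivalence.
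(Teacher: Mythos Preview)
Your argument is correct. The key step---applying Lemma~\ref{rem:divisible exp} with $S=\emptyset$ to deduce that $Z/T\to A$ is a left fibration whenever $Z\to A\times B$ is a bifibration---is exactly the ingredient the paper uses. The difference is only in packaging: the paper works on the right-adjoint side, invoking the standard criterion that $(-)/T$ is right Quillen once it preserves trivial fibrations (clear) and fibrations between fibrant objects (which reduces, via Proposition~\ref{prop:char of bifibns between bifibns} and Lemma~\ref{rem:divisible exp}, to the statement that $X/T\to Y/T$ is a left fibration for any bifibration $X\to Y$). You instead work on the left-adjoint side, which requires you to check the simplicial enrichment of the adjunction and then test directly against the definition of bivariant equivalence. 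Both routes are short; the paper's avoids the enriched-adjunction bookkeeping, while yours makes the connection to the definition of bivariant equivalence more transparent.
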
 

\begin{proof} 
It is clear that the functor 
$(-)/T$ preserves trivial fibrations.  Therefore 
it suffices to prove that $(-)/T$ preserves 
fibrations between fibrant objects.  Hence it suffices 
to prove that if $X\to Y$ is a bifibration 
in $(\sSet)_{/(A\times B)}$, then $X/T\to Y/T$ is a 
left fibration in $(\sSet)_{/A}$.  
This follows immediately from 
Lemma~\ref{rem:divisible exp}, taking $S = \emptyset$.  
\end{proof} 

In particular, taking $T\to B$ to be the identity map $\id_B\colon 
B\to B$, we see that the functor $\pi_A^*\colon (\sSet)_{/A}\to 
(\sSet)_{/(A\times B)}$ is left Quillen for the covariant 
model structure on $(\sSet)_{/A}$ and the bivariant model 
structure on $(\sSet)_{/(A\times B)}$.  An analogous statement 
is true for the functor $\pi_B^*$.  
We record this observation in the 
following proposition.  

\begin{proposition} 
Let $A$ and $B$ be simplicial sets.  Then the following statements are true: 
\begin{enumerate}
\item The adjunction 
\[
\pi_{B}^*\colon (\sSet)_{/B} \rightleftarrows (\sSet)_{/(A\times B)}\colon (\pi_{B})_* 
\]
is a Quillen adjunction for the bivariant model structure on $(\sSet)_{/(A\times B)}$ 
and the contravariant model structure on $(\sSet)_{/B}$; 
\item The adjunction 
\[
\pi_{A}^*\colon (\sSet)_{/A} \rightleftarrows (\sSet)_{/(A\times B)}\colon (\pi_{A})_* 
\]
is a Quillen adjunction for the bivariant model structure on $(\sSet)_{/(A\times B)}$ 
and the covariant model structure on $(\sSet)_{/A}$.  
\end{enumerate} 
\end{proposition}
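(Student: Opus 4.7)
The plan is to reduce (2) to the preceding proposition by a direct specialization, and to prove (1) by the formally dual argument on the first factor. For (2), apply the preceding proposition with $T=B$ equipped with the identity map $\id_B\colon B\to B$. Then $(-)\times T\colon (\sSet)_{/A}\to (\sSet)_{/(A\times B)}$ is canonically identified with $\pi_A^*$, and its right adjoint $(-)/T$ with $(\pi_A)_*$; the Quillen adjunction for (2) is exactly what the preceding proposition asserts in this case.

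For (1) I would exploit that the two-variable functor $(\sSet)_{/A}\times (\sSet)_{/B}\to (\sSet)_{/(A\times B)}$, $(S,T)\mapsto S\times T$, is divisible on the left as well as on the right. For each fixed $S\to A$ the functor $S\times (-)\colon (\sSet)_{/B}\to (\sSet)_{/(A\times B)}$ therefore has a right adjoint $S\setminus (-)\colon (\sSet)_{/(A\times B)}\to (\sSet)_{/B}$ given by a pullback formula mirroring the one for $(-)/T$, and when $S=A$ with $\id_A$ this right adjoint is precisely $(\pi_B)_*$. The key lemma, dual to Lemma~\ref{rem:divisible exp}, is: for a bifibration $X\to Y$ in $(\sSet)_{/(A\times B)}$ and a monomorphism $S\to T$ in $(\sSet)_{/A}$, the induced map
\[
S\setminus X\to T\setminus X\times_{T\setminus Y}S\setminus Y
\]
is a \emph{right} fibration in $(\sSet)_{/B}$. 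Its proof is formally identical to that of Lemma~\ref{rem:divisible exp}, substituting the second clause of Remark~\ref{rem:restriction of 2-sided fibn} (a bifibration restricted to $\set{a}\times B$ is a right fibration over $B$) for the first, and invoking Proposition~\ref{prop:exp} to control the exponentiated bifibration $X^S\to X^T\times_{Y^T}Y^S$. Taking $S=\emptyset$ and $T=A$ with $\id_A$ then yields that $(\pi_B)_*$ sends bifibrations to right fibrations; combined with the trivial observation that $(\pi_B)_*$ preserves trivial fibrations and with Proposition~\ref{prop:char of bifibns between bifibns}, this gives the desired Quillen adjunction.

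I expect no substantive obstacle, since the argument for (1) is a symmetric dualization of the discussion preceding the proposition; the only matter requiring care is bookkeeping in the pullback formula defining $S\setminus(-)$, which is the mirror image of the one already given. As an alternative route one can derive (1) from (2) by the duality that combines the opposite-simplicial-set involution $X\mapsto X^{\op}$ with the factor-swap isomorphism $A\times B\cong B\times A$: the asymmetric roles of the two factors in Definition~\ref{def:bifibration} align correctly under this combined operation (the horns $\Lambda^n_0$ and $\Lambda^n_n$ swap under opposite, matching the swap of which factor carries the degenerate edge), producing an isomorphism between the bivariant model structure on $(\sSet)_{/(A\times B)}$ and the bivariant model structure on $(\sSet)_{/(B^{\op}\times A^{\op})}$ under which $\pi_B^*$ is carried to $\pi_{B^{\op}}^*$ and the contravariant model structure on $(\sSet)_{/B}$ is carried to the covariant model structure on $(\sSet)_{/B^{\op}}$.
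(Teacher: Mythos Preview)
Your proposal is correct and follows the paper's own approach: the paper records this proposition as an immediate consequence of the preceding proposition by taking $T=\id_B$ for statement (2), and states that ``an analogous statement is true for the functor $\pi_B^*$'' for statement (1), which is exactly the dual left-divisibility argument you spell out.
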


\subsection{Bivariant equivalences} 
\label{sec:bivariant eqs}
In this section we establish some useful facts about bivariant equivalences.  

\begin{proposition} 
\label{prop:fibers contractible}
Let $A$ and $B$ be simplicial sets. 
Let $f\colon X\to Y$ be a bifibration in $(\sSet)_{/(A\times B)}$.  
If the fibers of $f$ are contractible 
then $f$ is a trivial Kan fibration.   
\end{proposition}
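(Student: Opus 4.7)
The plan is to establish by induction on $n$ that $f$ has the right lifting property against $\partial\Delta^n \hookrightarrow \Delta^n$ for every $n \geq 0$, which will show that $f$ is a trivial Kan fibration.

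As a preliminary I would verify that for every vertex $y \in Y$ the fiber $X_y$ is a Kan complex. Given any horn $\Lambda^k_i \to X_y$, its composite with $X_y \hookrightarrow X \xrightarrow{f} Y$ is the constant $k$-simplex at $y$, whose further image in $A \times B$ is degenerate in both factors. The degeneracy conditions in the outer horn lifting part of the characterization of a bifibration (Remark~\ref{rem:char of bifibrations}) are therefore trivially satisfied, and the inner fibration property handles inner horns, so $X_y$ is a Kan complex. By hypothesis it is contractible, in particular non-empty; this handles the base case $n = 0$.

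For the inductive step, given a lifting problem with $n \geq 1$, my plan would be to first pull $f$ back along $v\colon \Delta^n \to Y$. Since bifibrations in $(\sSet)_{/(A\times B)}$ are stable under base change, this reduces the problem to extending a given map $\partial\Delta^n \to X'$ to a section of a new bifibration $f'\colon X' \to \Delta^n$ over the identity, where the fibers of $f'$ remain contractible. I would build this extension in two stages: first extend the restriction of the given data to $\Lambda^n_0$ to an $n$-simplex $\tilde u$ using the left outer horn lifting property of the bifibration (circumventing the degeneracy constraint on the edge $\Delta^{\{0,1\}} \to B$ via an auxiliary construction that prepends degenerate simplices, made available by the contractibility of fibers); second, reconcile the face $\tilde u|_{d_0\Delta^n}$ with the given $u|_{d_0\Delta^n}$ by connecting them via an $n$-simplex in the base-changed bifibration over $\Delta^{n-1}$, which by the inductive hypothesis admits enough structure to produce such a connection, and then splice it into $\tilde u$ via an inner horn fill using the inner fibration property of $f'$.

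The main obstacle I anticipate is the degeneracy restriction in the outer horn lifting property of a bifibration: unlike in Lurie's Lemma 2.1.3.4 of \cite{HTT}, where every $\Lambda^n_0$-horn above a left fibration can be filled without constraint, here the structure map $\Delta^n \to A \times B$ need not send the relevant edge to a degenerate one. Bridging this gap by auxiliary constructions that exploit the contractibility of fibers (together with the symmetric appeal to the right outer horn lifting for $\Lambda^n_n$ on the opposite side) will require careful bookkeeping to preserve compatibility with the original boundary data, and is where the bulk of the technical work will lie.
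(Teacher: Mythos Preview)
Your outline correctly isolates the central difficulty --- the degeneracy constraint on outer-horn lifts for a bifibration --- but does not actually resolve it. The phrase ``an auxiliary construction that prepends degenerate simplices, made available by the contractibility of fibers'' is a placeholder, not an argument: after your base change the structure map $\Delta^n\to A\times B$ sends $\Delta^{\{0,1\}}$ to a typically nondegenerate edge of $B$, and nothing you have written produces a filler of the $\Lambda^n_0$-horn over the \emph{given} $n$-simplex of $Y$. Without that filler the reconciliation step never starts. (The reconciliation step itself is also vague: ``splicing via an inner horn fill'' would require filling cells of dimension $n$ and $n+1$, and you would need to say exactly which bivariant-anodyne inclusion is being used and why the relevant edges satisfy the degeneracy hypothesis.)

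The paper's proof takes a different and non-inductive route that manufactures the needed degeneracy rather than trying to work around it. After reducing to $A=B=Y=\Delta^n$ with the diagonal structure map, one equips the cylinder $\partial\Delta^n\times\Delta^1$ with the structure map $(h,k)\colon \partial\Delta^n\times\Delta^1\to \Delta^n\times\Delta^n$ where $h$ is the projection and $k$ is the contraction onto the final vertex $n$. Every edge $\{i\}\times\Delta^1$ is then degenerate in the $B$-factor, so $\partial\Delta^n\times\{0\}\hookrightarrow \partial\Delta^n\times\Delta^1$ is bivariant anodyne (class~$(2'')$ of Proposition~\ref{prop:alt desc biv anodyne}) and lifts against $f$. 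At time~$1$ the lift lands in the contractible fiber $X|\{n\}$ and therefore extends over $\Delta^n\times\{1\}$. Finally one fills $\partial\Delta^n\times\Delta^1\cup\Delta^n\times\{1\}\hookrightarrow \Delta^n\times\Delta^1$ using that the edge $\{n\}\times\Delta^1$ lies in a fiber, hence is an equivalence, hence is $f$-cartesian (Propositions~2.4.1.5 and~2.4.1.8 of \cite{HTT}); restricting to time~$0$ gives the desired extension of $\psi$. The contraction homotopy $(h,k)$ is precisely the missing idea in your outline.
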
 

\begin{proof} 
We need to prove that the map $f$ has the right lifting property against the inclusion 
$\partial\Delta^n\subseteq \Delta^n$ for all $n\geq 0$.  This is clear when $n=0$, since the 
fibers of $f$ are non-empty.  Suppose $n>0$.  Consider a commutative diagram of the form 
\[
\begin{tikzcd} 
\partial\Delta^n \arrow[d] \arrow[r,"\psi"] & X \arrow[d,"f"] \\ 
\Delta^n \arrow[ur,dashed] \arrow[r] & Y.
\end{tikzcd} 
\]
We want to show that the dotted arrow exists making the diagram commute.  By a base change we 
may suppose that $A = B = \Delta^n$, $Y = \Delta^n$ and that the structure map $Y\to A\times B$ 
is the diagonal inclusion $\Delta^n\to \Delta^n\times \Delta^n$.   

Let $h\colon \Delta^n\times \Delta^1\to \Delta^n$ denote 
the canonical projection.
Let $k\colon \Delta^n\times \Delta^1\to \Delta^n$ be the canonical contraction of $\Delta^n$ 
onto its final vertex so that $k|\Delta^n\times \set{0} = \mathrm{id}_{\Delta^n}$ and 
$k|\Delta^n\times \set{1}$ is the constant map on the final 
vertex.    

The inclusion $\partial\Delta^n\times \set{0}\to \partial\Delta^n\times \Delta^1$ in 
$(\sSet)_{/(A\times B)}$ with structure map $\lambda :=(h,k)|\partial\Delta^n\times \Delta^1$  
is a bivariant anodyne map in $(\sSet)_{/(A\times B)}$ by 
Proposition~\ref{prop:alt desc biv anodyne} with $K=\emptyset$, 
$L=\partial\Delta^n$.  Hence we can find the indicated diagonal filler in the diagram 
\[
\begin{tikzcd} 
\partial\Delta^n\times \set{0}\arrow[d] \arrow[r] & X \arrow[d,"f"] \\ 
\partial\Delta^n \times \Delta^1 \arrow[ur,dashed,"\phi"] \arrow[r,"\lambda"'] & Y. 
\end{tikzcd} 
\]
Observe that $\phi|\partial\Delta^n\times \set{1}$ has image inside the contractible Kan complex 
$X|\set{n}$.  Hence we may extend $\phi|\partial\Delta^n\times \set{1}$ to a map $\tilde{\phi}
\colon \Delta^n\times \set{1}\to X$.  We have a commutative diagram 
\[
\begin{tikzcd} 
\partial\Delta^n\times \Delta^1\cup \Delta^n\times \set{1}\arrow[r,"\psi'"] \arrow[d] & X \arrow[d,"f"] \\ 
\Delta^n\times \Delta^1 \arrow[r] & \Delta^n\times \Delta^n 
\end{tikzcd} 
\]
in which $\psi' = \phi\cup \tilde{\phi}$.  Observe that $\psi|\set{n}\times \Delta^1$ is an 
equivalence in $X|\set{n}$ and hence in $X$.  It follows from Proposition 2.4.1.5 and Proposition 2.4.1.8 
of \cite{HTT} that there exists a diagonal filler $\psi''\colon \Delta^n\times \Delta^1\to X$ 
for this diagram.  The restriction $\psi''|\Delta^n\times \set{0}$ is then the desired extension 
of the original map $\psi$.  
\end{proof} 

\begin{theorem}
\label{thm:pointwise biveq}
Let $A$ and $B$ be simplicial sets and let $X\to A\times B$ and $Y\to A\times B$ be bifibrations.  
Then a map $f\colon X\to Y$ in $(\sSet)_{/(A\times B)}$ is a bivariant equivalence if and only if it 
is a pointwise weak homotopy equivalence.  
\end{theorem}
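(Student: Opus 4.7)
The plan is to prove the forward direction as an immediate consequence of the characterization of bivariant equivalences between bifibrations as fiberwise homotopy equivalences, and to prove the converse by factoring $f$ through a bivariant anodyne map and using Proposition~\ref{prop:fibers contractible} on the residual bifibration.

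For the forward direction, suppose $f\colon X\to Y$ is a bivariant equivalence between bifibrations. By Lemma~\ref{lem:fiberwise hty equiv}, $f$ is a fiberwise homotopy equivalence, so there is $g\colon Y\to X$ in $(\sSet)_{/(A\times B)}$ with $gf$ fiberwise homotopic to $1_X$ and $fg$ fiberwise homotopic to $1_Y$. Restricting this data over any vertex $(a,b)\in A\times B$ and invoking Lemma~\ref{lem:fibers of bifib kan}, the induced map $f_{(a,b)}\colon X_{(a,b)}\to Y_{(a,b)}$ becomes a homotopy equivalence of Kan complexes, hence a weak homotopy equivalence.

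For the converse, suppose $f\colon X\to Y$ is a pointwise weak homotopy equivalence between bifibrations. Applying the small object argument to the generating bivariant anodyne maps, I would factor $f=pi$ with $i\colon X\to X'$ bivariant anodyne and $p\colon X'\to Y$ having the right lifting property against all bivariant anodynes. A direct lifting argument (using that $Y\to A\times B$ is itself a bifibration) shows that the composite $X'\to Y\to A\times B$ is also a bifibration, so $p$ is a bifibration between bifibrations. Since $i$ is bivariant anodyne, it is a bivariant equivalence (Lemma~\ref{lem:biv ano implies biv eq}), so the forward direction already established gives that $i$ is a pointwise weak homotopy equivalence; two-out-of-three then makes $p$ a pointwise weak homotopy equivalence as well. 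It now suffices, by Proposition~\ref{prop:fibers contractible}, to show every fiber of $p$ over a vertex $y\in Y$ is contractible. Writing $(a,b)$ for the image of $y$ in $A\times B$, the fiber $p^{-1}(y)$ is identified with the fiber over $y\in Y_{(a,b)}$ of the restriction $p_{(a,b)}\colon X'_{(a,b)}\to Y_{(a,b)}$. By Remark~\ref{rem:restriction of 2-sided fibn} applied in two stages (restricting the bifibration $p$ first over $\{b\}\subseteq B$ and then over $\{a\}\subseteq A$), the map $p_{(a,b)}$ is a left fibration of simplicial sets; since its codomain is a Kan complex by Lemma~\ref{lem:fibers of bifib kan}, a standard argument using Lemma~\ref{lem:inner implies kan} upgrades $p_{(a,b)}$ to a Kan fibration between Kan complexes. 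Being also a weak homotopy equivalence, it is a trivial Kan fibration, so $p^{-1}(y)$ is contractible. Hence $p$ is a trivial Kan fibration, thus a bivariant equivalence by Lemma~\ref{lem:triv fibn biv equiv}, and $f=pi$ is a composite of bivariant equivalences.

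The main obstacle is the identification of $p^{-1}(y)$ with the fiber of the Kan fibration $p_{(a,b)}$: one needs to carefully assemble Remark~\ref{rem:restriction of 2-sided fibn} with the observation that a left fibration whose target is a Kan complex is itself a Kan fibration, in order to apply Proposition~\ref{prop:fibers contractible} to $p$. Once this is in place the rest of the argument is a routine two-out-of-three and factorization manipulation.
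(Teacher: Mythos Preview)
Your proof is correct and follows essentially the same route as the paper: the forward direction via Lemma~\ref{lem:fiberwise hty equiv}, and the converse by factoring through a bivariant anodyne map and applying Proposition~\ref{prop:fibers contractible} to the residual bifibration. The paper organizes the converse slightly differently (first treating the special case where $f$ itself is already a bivariant fibration, then reducing to it), but the content is the same; in fact you spell out carefully why the fibers of $p$ are contractible---via the Kan fibration $p_{(a,b)}$ between Kan complexes---whereas the paper simply asserts this.
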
 

\begin{proof} 
$(\Rightarrow)$  Suppose $f\colon X\to Y$ is a bivariant equivalence between bifibrations.  
By Lemma~\ref{lem:fiberwise hty equiv}, $f\colon X\to Y$ is a fiberwise homotopy equivalence.  Therefore 
$f$ is a pointwise weak homotopy equivalence, since every fiberwise homotopy equivalence is a 
pointwise homotopy equivalence.      

$(\Leftarrow)$  Let $f\colon X\to Y$ be a pointwise weak 
homotopy equivalence.  Suppose first that $f\colon X\to Y$ is a bivariant fibration.  Then $f$ is a trivial 
fibration by Proposition~\ref{prop:fibers contractible}, since the fibers of $f$ are contractible.  
Hence $f$ is a bivariant equivalence (Lemma~\ref{lem:triv fibn biv equiv}).  Now suppose that 
$f$ is an arbitrary pointwise weak homotopy equivalence between bifibrations.  Via the small object argument, we may factor 
$f$ as $f=hg$, where $h\colon X'\to Y$ is a bifibration and where $g\colon X\to 
X'$ is a bivariant anodyne map.  Then $g$ is a bivariant equivalence 
by Lemma~\ref{lem:biv ano implies biv eq}; since it is a bivariant 
equivalence between bifibrations it is a pointwise weak homotopy equivalence 
by the forward implication proved above.  
Hence $h$ is a pointwise weak homotopy equivalence.  By 
Proposition~\ref{prop:char of bifibns between bifibns} we see that 
$h$ is a bivariant fibration.  Hence it is a bivariant equivalence  
by the special case we have proven above.  
\end{proof} 

The following characterization of bivariant 
equivalences is anticipated by Theorem~\ref{thm:7 from covariant paper}.  
This characterization is due to Joyal.     

\begin{theorem}[Joyal] 
\label{thm:char of biv eq}
Let $A$ and $B$ be simplicial sets and let $f\colon X\to Y$ be a map in $(\sSet)_{/(A\times B)}$. 
The following statements are equivalent: 
\begin{enumerate}[(i)]
\item the map $f\colon X\to Y$ is a bivariant equivalence; 

\item if $R\to A$ is a right fibration and $L\to B$ is a left fibration then the induced map 
\[
R\times_A X\times_B L\to R\times_A Y\times_B L 
\]
is a weak homotopy equivalence.  

\item for every pair of vertices $a\in A$ and $b\in B$, if $\set{a}\to Ra\to A$ is a 
factorization of $\set{a}\to A$ into a right anodyne map followed by a right fibration, and 
if $\set{b}\to Lb\to B$ is a factorization of $\set{b}\to B$ into a left anodyne map 
followed by a left fibration, then the induced map 
\[
Ra\times_A X\times_B Lb\to Ra\times_A Y\times_B Lb 
\]
is a weak homotopy equivalence; 
\end{enumerate}
\end{theorem}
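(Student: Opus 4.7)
The implication (ii) $\Rightarrow$ (iii) is immediate: specialize to $R = Ra$ and $L = Lb$, which are by construction a right fibration over $A$ and a left fibration over $B$. I plan to prove (i) $\Rightarrow$ (ii) and (iii) $\Rightarrow$ (i) independently; combined with the trivial implication this closes the cycle.

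For (i) $\Rightarrow$ (ii) my strategy is to use factorization together with weak saturation. Let $\mathcal{C}$ denote the class of monomorphisms $u\colon M\to N$ in $(\sSet)_{/(A\times B)}$ for which the induced map $R\times_A M\times_B L\to R\times_A N\times_B L$ is a weak homotopy equivalence for every right fibration $R\to A$ and every left fibration $L\to B$. Since $\sSet$ is locally cartesian closed, pullback commutes with colimits, so weak homotopy equivalences in $\sSet$ are preserved under the relevant pushouts and transfinite compositions, and $\mathcal{C}$ is weakly saturated. To show $\mathcal{C}$ contains every bivariant anodyne map it suffices to verify $\mathcal{C}$ contains the three generator classes of Definition~\ref{def:biv anodyne}: for any $n$-simplex $\Delta^n\to A\times B$, the pullback $R\times_A\Delta^n\times_B L$ fibers over $\Delta^n$ as a left fibration (pulled back from $L$) over a right fibration (pulled back from $R$), and the generators' inner/outer horn shapes together with the stated degeneracy hypotheses on their structure maps force the corresponding pullback inclusions to be weak homotopy equivalences. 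For a bivariant equivalence $p$ between bifibrations, Lemma~\ref{lem:fiberwise hty equiv} and Theorem~\ref{thm:pointwise biveq} identify $p$ as a fiberwise homotopy equivalence; its base change along $R\times L\to A\times B$ is a fiberwise homotopy equivalence between bifibrations over $R\times L$, and hence a weak homotopy equivalence on total spaces by the Dold-type result for fibrations with Kan-complex fibers (Lemma~\ref{lem:fibers of bifib kan}). The general case of a bivariant equivalence $f\colon X\to Y$ is treated by choosing a bivariant fibrant replacement $Y\to \hat Y$, factoring $X\to \hat Y$ as bivariant anodyne followed by a bifibration, and invoking 2-out-of-3.

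For (iii) $\Rightarrow$ (i) I would factor $f$ as $X\xrightarrow{j}X'\xrightarrow{p}Y$ with $j$ bivariant anodyne and $p$ a bifibration. Since the implication (i) $\Rightarrow$ (iii) is already in hand via the chain (i) $\Rightarrow$ (ii) $\Rightarrow$ (iii), and $j$ is a bivariant equivalence by Lemma~\ref{lem:biv ano implies biv eq}, 2-out-of-3 shows $p$ satisfies (iii). After a further fibrant-replacement reduction as in the previous paragraph one may assume both the source and target of $p$ are bifibrations, at which point Theorem~\ref{thm:pointwise biveq} reduces (i) to showing $p$ is pointwise a weak homotopy equivalence on strict fibers $X'_{(a,b)}\to Y_{(a,b)}$. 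The link between (iii) and this pointwise statement is supplied by the central technical lemma: for any bifibration $Z\to A\times B$ and any factorizations $Ra,Lb$ as in (iii), the canonical inclusion $Z_{(a,b)}\hookrightarrow Ra\times_A Z\times_B Lb$ is a weak homotopy equivalence. Granted the lemma, applying it to both $Z=X'$ and $Z=Y$ and combining with 2-out-of-3 completes the proof.

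The main obstacle is the verification of this technical lemma. The geometric picture is clean enough: the right-anodyne condition $\set{a}\to Ra$ makes $a$ terminal in $Ra$, and the left-anodyne condition $\set{b}\to Lb$ makes $b$ initial in $Lb$, so the $\mathrm{hocolim}$-like formula underlying the bifibration $\bar Z := Ra\times_A Z\times_B Lb\to Ra\times Lb$ is expected to compute $Z_{(a,b)}$ up to homotopy. My plan to execute this is a two-step factorization $Z_{(a,b)}\hookrightarrow \bar Z|\set{a}\times Lb\hookrightarrow \bar Z$: the first inclusion arises by base-changing along $\set{b}\to Lb$ (left anodyne) the right fibration $\bar Z|\set{a}\times Lb\to Lb$ obtained from the $\set{a}$-restriction of the bifibration, and is a weak equivalence by the standard stability of right fibrations under the left-anodyne base change in this direction; the second inclusion is the base change along $\set{a}\to Ra$ (right anodyne) of the bifibration $\bar Z\to Ra\times Lb$, and one handles it fiberwise over $Lb$ using the fact that each restriction $\bar Z|Ra\times\set{v}\to Ra$ is a left fibration, for which Lurie's result on the right-anodyne base change of left fibrations applies. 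The technical difficulty is then assembling these fiberwise weak equivalences into a genuine weak homotopy equivalence of total spaces, which requires compatibility of the two fibration directions coming from the bifibration structure; an alternative route, available when $A$ and $B$ are $\infty$-categories, is to transport the lemma through the Quillen equivalences of Theorem~\ref{thm: (delta_!,d) is a Quillen equivalence} and Theorem~\ref{thm:d^* d_* QE} to a pointwise comparison of left fibrations on $B^{\op}\times A$, where it becomes a direct consequence of Theorem~\ref{thm:7 from covariant paper} applied to the representable right fibration over $A$ at $a$ and the representable left fibration over $B$ at $b$.
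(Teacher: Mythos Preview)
Your overall architecture---the cycle (i) $\Rightarrow$ (ii) $\Rightarrow$ (iii) $\Rightarrow$ (i), the reduction to bivariant anodyne maps by fibrant replacement, and the identification of the key technical lemma comparing $Z_{(a,b)}$ with $Ra\times_A Z\times_B Lb$---matches the paper. But there are two genuine gaps.

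First, in (i) $\Rightarrow$ (ii) your verification that the generating horns of types (2) and (3) lie in $\mathcal{C}$ is only an assertion. The sentence ``the generators' inner/outer horn shapes together with the stated degeneracy hypotheses \ldots\ force the corresponding pullback inclusions to be weak homotopy equivalences'' is exactly the nontrivial step. The paper handles this by base-changing to $A=\Delta^n$, replacing the right fibration $R\to\Delta^n$ by a quasi-equivalent mapping simplex $M(\phi)$ (HTT \S3.2.2), and exploiting the explicit pushout decomposition of $M(\phi)$ to exhibit $M(\phi)|\Lambda^n_0\hookrightarrow M(\phi)$ as a pushout of a bivariant anodyne map; one then finishes via Proposition~\ref{prop:biv eqs and co and cont eqs}. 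Your separate treatment of the ``bivariant equivalence between bifibrations'' case is correct but does not circumvent this: the factorization you describe still produces bivariant anodyne maps that must be shown to satisfy (ii).

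Second, in the technical lemma your two-step factorization is the right idea, and your first step is fine (properness of right fibrations). For the second step you try to argue fiberwise over $Lb$ and then assemble, and you correctly flag this as the difficulty; in fact the direct route works and avoids the assembly problem entirely. The map $Z\times_B Lb\to A\times Lb$ is a bifibration, so the composite $Z\times_B Lb\to A$ is a cocartesian fibration and hence smooth (Proposition 4.1.2.15 of \cite{HTT}); base change of the right anodyne map $\{a\}\to Ra$ along it is therefore right anodyne, which is exactly your second inclusion. The paper runs the same two-step argument (in the opposite order) using smoothness/properness at both steps. Finally, your proposed alternative route through Theorem~\ref{thm:d^* d_* QE} is circular: that theorem invokes Theorem~\ref{thm:char of biv eq} in its proof.
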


\begin{proof} 
The proof that (ii) implies (iii) is trivial.  We prove that (i) implies (ii).  
Suppose that $f\colon X\to Y$ is a bivariant equivalence in $(\sSet)_{/(A\times B)}$.  
As in the proof of Proposition~\ref{prop:biv eqs access} above, we can find a commutative 
diagram in $(\sSet)_{/(A\times B)}$ of the form 
\[
\begin{tikzcd} 
X \arrow[d] \arrow[r,"f"] & Y \arrow[d] \\ 
X' \arrow[r,"f'"'] & Y' 
\end{tikzcd} 
\]
in which the vertical arrows are bivariant anodyne maps and $f'\colon X'\to Y'$ is a trivial 
Kan fibration.  It follows that without loss of generality, we may suppose that 
$f\colon X\to Y$ is a bivariant anodyne map.    

Therefore we will prove that if $f\colon X\to Y$ is a bivariant anodyne map then the induced 
map $R\times_A X\times_B L\to R\times_A Y\times_B L$ is a weak homotopy equivalence.  
The class of all maps $X\to Y$ in $(\sSet)_{/(A\times B)}$ with this property is weakly saturated.  
Therefore it suffices to show that this class contains all maps of the form (1), (2) 
and (3) from Definition~\ref{def:biv anodyne}.  It is clear, using 
Proposition 3.3.1.3 from \cite{HTT} that this class contains all maps of the form (1).  
By a duality argument, it 
suffices to prove this for all maps of the form (2) from Definition~\ref{def:biv anodyne}.  

% Applying Proposition~\ref{prop:alt desc biv anodyne}, we see that it suffices to check 
% this for maps of the form ($2'$) from Proposition~\ref{prop:alt desc biv anodyne}.
By a base-change argument we may suppose 
that $A = \Delta^n$.  Let $R\to A$ be a right fibration.  
We will prove the following statement: if $\Lambda^n_0\to \Delta^n$ 
is a map of the form (2) in $(\sSet)_{/(A\times B)}$ 
from Definition~\ref{def:biv anodyne} then the image of $R\times_A \Lambda^n_0\to 
R\times_A \Delta^n$ under $(\pi_B)_!\colon (\sSet)_{/(R\times B)}\to 
(\sSet)_{/B}$ is a contravariant equivalence in $(\sSet)_{/B}$.  Applying 
Theorem~\ref{thm:7 from covariant paper}, we deduce that $R\times_A \Delta^{\set{0}}\times_B L\to 
R\times_A \Delta^1\times_B L$ is a weak homotopy equivalence.

We use the theory of mapping simplexes (see 
Section 3.2.2 of \cite{HTT}).  There is a sequence $\phi\colon A^n\to \cdots \to A^1\to A^0$ 
of composable morphisms between Kan complexes and a quasi-equivalence $M(\phi)\to R$ (see Definition 3.2.2.6 of 
\cite{HTT}).  We have a pullback diagram 
of the form 
\[
\begin{tikzcd} 
M(\phi)|_{\Lambda^n_0} \arrow[r] \arrow[d] & R|_{\Lambda^n_0} \arrow[d] \\ 
M(\phi) \arrow[r] & R 
\end{tikzcd} 
\]
in which the horizontal maps are categorical equivalences 
by Proposition 3.2.2.10 of \cite{HTT}.  
Therefore, it suffices to prove that $(\pi_B)_!M(\phi)|\Lambda^n_0\to (\pi_B)_!M(\phi)$ is a 
contravariant equivalence in $(\sSet)_{/B}$, since every categorical equivalence 
is a contravariant equivalence.  
But the map $M(\phi)|\Lambda^n_0 \to M(\phi)$ forms part of a pushout diagram 
\begin{equation} 
\label{eq:mapping simplex pushout}
\begin{tikzcd} 
A^n\times \Lambda^n_0 \arrow[d] \arrow[r] & M(\phi)\times_{\Delta^n}\Lambda^n_0 \arrow[d] \\ 
A^n\times \Delta^n \arrow[r] & M(\phi) 
\end{tikzcd} 
\end{equation}
and hence is bivariant anodyne, since it is the pushout of the bivariant anodyne 
map $A^n\times \Lambda^n_0 \to A^n\times \Delta^n$.  This suffices to complete the proof, 
by (1) of Proposition~\ref{prop:biv eqs and co and cont eqs}.    

To see that the diagram~\eqref{eq:mapping simplex pushout} above is a pushout, observe that from the proof of 
Proposition 3.2.2.10 from \cite{HTT} we have a pushout diagram of the form 
\[
\begin{tikzcd} 
A^n\times \Delta^{\set{0,\ldots,n-1}} \arrow[r] \arrow[d] & M(\phi') \arrow[d] \\ 
A^n\times \Delta^n \arrow[r] & M(\phi) 
\end{tikzcd} 
\]
where $\phi'$ denotes the composable sequence 
$\phi'\colon A^{n-1}\to \cdots \to A^1\to A^0$.  It follows 
that the top square and the outer square in the composite diagram 
\[
\begin{tikzcd} 
A^n\times \Delta^{\set{0,\ldots,n-1}} \arrow[d] \arrow[r] & 
M(\phi')\times_{\Delta^n}\Lambda^n_0 \simeq M(\phi') \arrow[d] \\ 
A^n\times \Lambda^n_0 \arrow[r] \arrow[d] & M(\phi)\times_{\Delta^n}\Lambda^n_0 \arrow[d] \\ 
A^n\times \Delta^n \arrow[r] & M(\phi) 
\end{tikzcd} 
\]
are pushouts, and hence so is the diagram~\eqref{eq:mapping simplex pushout}.

Finally, suppose that (iii) holds; we will prove that (i) holds, i.e.\ $f$ is a bivariant equivalence.  
Via the small object argument, we may find a commtutative diagram 
\[
\begin{tikzcd} 
X \arrow[r,"f"] \arrow[d]{} & Y \arrow[d] \\ 
X' \arrow[r] & Y' 
\end{tikzcd} 
\]
in which the vertical maps are bivariant 
anodyne maps and $X',Y'$ are bifibrations 
with structure maps $(p_{X'},q_{X'})
\colon X'\to A\times B$, $(p_{Y'},q_{Y'})\colon Y'\to A\times B$ respectively.  
By Theorem~\ref{thm:pointwise biveq} it suffices 
to show that $X'\to Y'$ is a pointwise homotopy equivalence.  Let $a\in A$ 
and $b\in B$ be vertices, and let $\set{a}\to Ra\to A$, 
$\set{b}\to Lb\to B$ be factorizations of 
$\set{a}\to A$ and $\set{b}\to B$ into a right anodyne 
map followed by a right fibration, 
and a left anodyne map followed by a left fibration 
respectively.  We claim that the canonical maps 
\[
\set{a}\times_A X'\times_B \set{b}\to Ra\times_A X'\times_B Lb 
\]
and 
\[
\set{a}\times_A Y'\times_B \set{b}\to Ra\times_A Y'\times_B Lb 
\]
are weak homotopy equivalences.  The first map above factors 
as 
\[
\set{a}\times_A X'\times_B \set{b}\to Ra\times_A X'\times_B \set{b}\to Ra\times_A X'\times_B Lb 
\]
The map $\set{a}\times_A X'\times_B \set{b}\to Ra\times_A X'\times_B \set{b}$ 
is right anodyne (and hence a weak homotopy equivalence) since 
$X'\times_{B} \set{b}\to A\times \set{b}$ is a bifibration 
and hence the composite map $X'\times_B \set{b}\to 
A\times \set{b}\to A$ 
is smooth (Proposition 4.1.2.15 of \cite{HTT}).  
Similarly, $Ra\times_{A}X'\to Ra\times B$ is a 
bifibration, and hence a duality argument 
using Proposition 4.1.2.15 of \cite{HTT} again 
shows that the induced map $Ra\times_{A} X'\times_{B} \set{b}\to Ra\times_{A} 
X'\times_{B} Lb$ is left anodyne (and hence is a 
weak homotopy equivalence).  It follows that the first 
canonical map above is a weak homotopy equivalence.  The proof 
that the second canonical map above is a weak homotopy 
equivalence is completely analogous.      

Therefore, under the hypothesis that (iii) holds, we see that $X'\to Y'$ is a pointwise weak 
homotopy equivalence (and hence $f$ is a bivariant equivalence) if and only if the 
two vertical maps 
\[
Ra\times_A X\times_B Lb\to Ra\times_A X'\times_B Lb\quad \text{and}\quad 
Ra\times_A Y\times_B Lb\to Ra\times_A Y'\times_B Lb 
\]
are weak homotopy equivalences.  This follows from the implication (i) implies (ii), 
which we have already proven.    
\end{proof} 

The following is a very useful example of a 
bivariant equivalence.  

\begin{lemma} 
\label{lem:diag biv anodyne}
Let $A$ and $B$ be simplicial sets.  
For every $n\geq 0$ and for every map 
$\Delta^n\to A\times B$ in $(\sSet)_{/(A\times B)}$, 
the diagonal map $\Delta^n\to \Delta^n\times 
\Delta^n$ in $(\sSet)_{/(A\times B)}$ 
is a bivariant 
equivalence.  
\end{lemma}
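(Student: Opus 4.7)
The plan is to prove the lemma via a 2-out-of-3 argument, using a projection as a retract of the diagonal. Let $(f,g)\colon\Delta^n\to A\times B$ denote the structure map making $\Delta^n$ into an object of $(\sSet)_{/(A\times B)}$, and equip $\Delta^n\times\Delta^n$ with the structure map $(f,g)\circ\pi_1$, where $\pi_1\colon\Delta^n\times\Delta^n\to\Delta^n$ is the first projection; this is the natural choice making the diagonal $\delta\colon\Delta^n\to\Delta^n\times\Delta^n$ a morphism in the slice category. With this choice, $\pi_1$ is also a morphism in $(\sSet)_{/(A\times B)}$, and $\pi_1\circ\delta=\mathrm{id}_{\Delta^n}$.

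Since the identity is a bivariant equivalence and the class of bivariant equivalences satisfies the 2-out-of-3 property (being the weak equivalences of the bivariant model structure of Theorem~\ref{thm:biv model str}), it suffices to show that $\pi_1$ is a bivariant equivalence. For this I would use the characterization in Theorem~\ref{thm:char of biv eq}(iii). Fix vertices $a\in A$ and $b\in B$ and factorizations $\{a\}\to Ra\to A$ and $\{b\}\to Lb\to B$ as in the statement of that theorem. Because the structure map $(f,g)\circ\pi_1$ on $\Delta^n\times\Delta^n$ ignores the second $\Delta^n$-factor, a direct computation gives a canonical isomorphism
\[
Ra\times_A(\Delta^n\times\Delta^n)\times_B Lb\;\cong\;\bigl(Ra\times_A\Delta^n\times_B Lb\bigr)\times\Delta^n,
\]
under which the map induced by $\pi_1$ is projection onto the first factor. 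Since $\Delta^n$ is weakly contractible, this projection is a weak homotopy equivalence. Hence $\pi_1$ is a bivariant equivalence by Theorem~\ref{thm:char of biv eq}(iii), and therefore so is $\delta$ by 2-out-of-3.

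The argument is formal once the structure maps are set up correctly; the only delicate point is the identification of the structure on $\Delta^n\times\Delta^n$ as factoring through a projection, which is what allows the pullback in Theorem~\ref{thm:char of biv eq}(iii) to split off a contractible free factor. The 2-out-of-3 step is then automatic because weak equivalences in any model category satisfy 2-out-of-3, and both $\mathrm{id}_{\Delta^n}$ and $\pi_1$ are morphisms in the slice category.
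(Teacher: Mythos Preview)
Your proof has a genuine gap: you have equipped $\Delta^n\times\Delta^n$ with the wrong structure map over $A\times B$. The object intended in the lemma is $\Delta^n\times\Delta^n$ with structure map $(f\circ\pi_1,\, g\circ\pi_2)$, i.e.\ the first $\Delta^n$-factor maps to $A$ via $f$ and the second maps to $B$ via $g$. This is precisely the object arising as the unit $\Delta^n\to d^*d_!\Delta^n$ (see Remark~\ref{rem:delta*}), which is how the lemma is applied in Lemma~\ref{lem:d^*d_* triv kan fibn}; the paper's own proof confirms this by writing the relevant pullback as $A_{/a}\times_A\Delta^n\times\Delta^n\times_B B_{b/}$, with the $A$-pullback taken on the first factor and the $B$-pullback on the second.

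With the correct structure map, the projection $\pi_1\colon\Delta^n\times\Delta^n\to\Delta^n$ is \emph{not} a morphism in $(\sSet)_{/(A\times B)}$: compatibility over $B$ would require $g\circ\pi_1=g\circ\pi_2$ on $\Delta^n\times\Delta^n$, which fails whenever $g$ is nonconstant. So there is no retraction available in the slice, and your 2-out-of-3 argument collapses. Under your choice $(f,g)\circ\pi_1$ the statement is indeed trivial, but only because the $B$-component then ignores the second factor entirely; that is not the situation the lemma addresses.

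The paper's argument instead applies Theorem~\ref{thm:char of biv eq} directly to the diagonal. After pulling back along the right fibration $A_{/a}\times_A(-)$ one must show the resulting map is right anodyne, so that a further pullback along the left fibration $(-)\times_B B_{b/}$ remains a weak homotopy equivalence. The key input is that the diagonal $\Delta^n\to\Delta^n\times\Delta^n$ is \emph{dominant} (Lemma~\ref{lem:diagonal dominant}), that dominant maps are stable under base change along right fibrations (Lemma~\ref{lem:dom maps closed under cobase change along lf}), and that dominant maps are right cofinal (Remark~\ref{rem:dom and cofinal}). This is genuine content that your argument bypasses by assuming the wrong slice structure.
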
 

\begin{proof} 
We use the characterization of bivariant equivalences 
from Theorem~\ref{thm:char of biv eq}.  Let $a\in A$ 
and $b\in B$ be vertices.  We have a commutative diagram 
\[
\begin{tikzcd} 
A_{/a}\times_A \Delta^n\times_B B_{b/} \arrow[r] \arrow[d] & 
A_{/a}\times_A \Delta^n \times \Delta^n\times_{B} B_{b/} \arrow[d] \\ 
A_{/a}\times_A \Delta^n \arrow[r] \arrow[d] & 
A_{/a}\times_A \Delta^n \times \Delta^n \arrow[d]    \\ 
\Delta^n \arrow[r] & \Delta^n\times \Delta^n 
\end{tikzcd} 
\]
in which both squares are pullbacks.  
It suffices to prove that the middle horizontal map is 
right anodyne, since the base change of a right anodyne map 
along a left fibration is right anodyne, and hence a weak 
homotopy equivalence.  Since a dominant map is both 
left and right cofinal (Remark~\ref{rem:dom and cofinal}), and dominant 
maps are preserved under base change along right fibrations 
(Lemma~\ref{lem:dom maps closed under cobase change along lf}), 
the result follows from the fact that $\Delta^n\to \Delta^n\times \Delta^n$ 
is dominant (Lemma~\ref{lem:diagonal dominant}).  
\end{proof} 

\begin{remark} 
Note that in general a map in $(\sSet)_{/(A\times B)}$ 
of the form $\Delta^n\to \Delta^n\times \Delta^n$ is not 
bivariant anodyne.  
\end{remark} 

\begin{proposition} 
Suppose $X\to Y$ is a categorical equivalence in $(\sSet)_{/(A\times B)}$.  Then $X\to Y$ is a 
bivariant equivalence.  
\end{proposition}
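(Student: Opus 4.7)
My approach is to verify condition (ii) of Theorem~\ref{thm:char of biv eq}. Let $f\colon X\to Y$ be a categorical equivalence in $(\sSet)_{/(A\times B)}$, and let $R\to A$ be an arbitrary right fibration and $L\to B$ an arbitrary left fibration. I need to show that the induced map
\[
R\times_A X\times_B L\to R\times_A Y\times_B L
\]
is a weak homotopy equivalence of simplicial sets. The key observation is that this map is simply the pullback of $f\colon X\to Y$ along the projection $R\times L\to A\times B$, and that this projection factors as
\[
R\times L\to R\times B\to A\times B,
\]
where the right-hand map is the pullback of the right fibration $R\to A$ along $A\times B\to A$ and is therefore a right fibration, while the left-hand map is the pullback of the left fibration $L\to B$ along $R\times B\to B$ and is therefore a left fibration.

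With this factorization in hand, I would assemble two standard ingredients. First, every categorical equivalence of simplicial sets is a weak homotopy equivalence: the identity functor from the Joyal to the Kan--Quillen model structure on $\sSet$ is a left Quillen functor (both structures have monomorphisms as cofibrations, and every Joyal acyclic cofibration is a Kan--Quillen acyclic cofibration), so by Ken Brown's lemma it preserves all weak equivalences between cofibrant objects, i.e.\ all weak equivalences. Hence the underlying map $f\colon X\to Y$ is already a weak homotopy equivalence.

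Second, I would invoke the smooth/proper base change theorem: left fibrations are smooth (Proposition 4.1.2.15 of \cite{HTT}) and right fibrations are proper (its dual), and consequently base change along either class of maps preserves weak homotopy equivalences of simplicial sets. Applying this first to the right fibration $R\times B\to A\times B$ and then to the left fibration $R\times L\to R\times B$ shows that the pullback of the weak homotopy equivalence $f$ along the composite $R\times L\to A\times B$ is again a weak homotopy equivalence, as required. The main obstacle is the last invocation: the smooth/proper base change results in \cite{HTT} are usually formulated as preservation of right/left anodyne maps, so one must bridge these to preservation of arbitrary weak homotopy equivalences, which can be done by factoring a given weak homotopy equivalence through an anodyne extension followed by a Kan fibration and arguing separately on the two factors.
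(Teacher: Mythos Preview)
Your overall plan---verify condition (ii) of Theorem~\ref{thm:char of biv eq} by factoring the projection $R\times L\to A\times B$ as a left fibration followed by a right fibration and pulling back in two stages---is exactly the paper's strategy.  The difference lies in \emph{when} you pass from categorical equivalences to weak homotopy equivalences, and this is where your argument breaks down.

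You convert $f$ to a weak homotopy equivalence first and then try to show that pullback along left and right fibrations preserves weak homotopy equivalences.  This last assertion is false.  Consider the left fibration $p\colon Z\to\Delta^1$ given by the nerve of the poset $\{0_a,0_b<1\}$ mapping to $[1]$ by $0_a,0_b\mapsto 0$, $1\mapsto 1$; this is a discrete opfibration of posets, hence a left fibration of nerves.  Both $Z$ and $\Delta^1$ are weakly contractible, so $p$ is a weak homotopy equivalence, yet the fiber $Z_0=\{0_a,0_b\}$ is not contractible.  In particular, pulling back the anodyne map $\{0\}\hookrightarrow\Delta^1$ along $p$ yields $\{0_a,0_b\}\hookrightarrow Z$, which is not a weak homotopy equivalence.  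Thus your proposed bridging argument (factor as anodyne followed by Kan fibration and treat the two pieces separately) cannot work: smoothness and properness only give preservation of \emph{right} (respectively \emph{left}) anodyne maps, and an arbitrary anodyne map need not be of either type.

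The fix is to postpone the conversion.  Proposition~3.3.1.3 of \cite{HTT} says that pullback along a left or right fibration preserves \emph{categorical} equivalences.  So keep $f$ as a categorical equivalence through both pullbacks, obtaining that $R\times_A X\times_B L\to R\times_A Y\times_B L$ is a categorical equivalence, and only then observe that every categorical equivalence is a weak homotopy equivalence.  This is precisely the paper's proof.
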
 

\begin{proof} 
We use Theorem~\ref{thm:char of biv eq}.  Let $R\to A$ be a right fibration and let $L\to B$ be a 
left fibration.  Since $X\to Y$ is a categorical equivalence, it follows that $R\times_A X\to 
R\times_A Y$ is a categorical equivalence by Proposition 3.3.1.3 of \cite{HTT}. 
The induced map $R\times_A Y\times_B L\to R\times_A Y$ 
is a left fibration and hence $R\times_A X\times_B L\to R\times_A Y\times_B L$ is a 
categorical equivalence by Proposition 3.3.1.3 of \cite{HTT} again.  Hence the map 
$R\times_A X\times_B L\to R\times_AY\times_{B}L$ is a weak homotopy equivalence.  
\end{proof} 

The following corollaries are straightforward and are 
left to the reader.  

\begin{corollary} 
Let $A$ and $B$ be simplicial sets and let $(p,q)\colon X\to A\times B$ be a bifibration.  Then 
$(p,q)$ is a categorical fibration.  
\end{corollary}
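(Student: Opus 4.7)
The plan is to deduce the corollary by combining Proposition~\ref{prop:char of bifibns between bifibns} with the preceding proposition that categorical equivalences are bivariant equivalences. The key observation is that the identity map $\mathrm{id}\colon A\times B\to A\times B$ is trivially a bifibration in the sense of Definition~\ref{def:general bifibn}, since it has the right lifting property against any map, and in particular against every bivariant anodyne map in $(\sSet)_{/(A\times B)}$. Therefore both the source $X\to A\times B$ (by hypothesis) and the target $A\times B\to A\times B$ are bifibrations in $(\sSet)_{/(A\times B)}$.

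With this in hand, I would apply Proposition~\ref{prop:char of bifibns between bifibns} to the map $(p,q)\colon X\to A\times B$, viewed as a morphism in $(\sSet)_{/(A\times B)}$ between the two bifibrations just identified. That proposition then yields that $(p,q)$ is a bivariant fibration, i.e.\ it has the right lifting property against every monomorphism in $(\sSet)_{/(A\times B)}$ that is a bivariant equivalence.

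To conclude that $(p,q)$ is a categorical fibration, I would verify the right lifting property against every trivial cofibration in the Joyal model structure on $\sSet$. Such a map $i\colon M\to N$ is, by definition, both a monomorphism and a categorical equivalence. Given a commutative square over $A\times B$, the map $i$ becomes a monomorphism in $(\sSet)_{/(A\times B)}$, and by the proposition preceding this corollary (categorical equivalences are bivariant equivalences), it is a bivariant equivalence in $(\sSet)_{/(A\times B)}$. Hence it is a monic bivariant equivalence, and so the lifting problem can be solved by the bivariant fibration property established above.

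There is essentially no real obstacle here: the argument is a direct composition of three previously established results (the reflexivity observation that the identity is a bifibration, Proposition~\ref{prop:char of bifibns between bifibns}, and the fact that categorical equivalence $\Rightarrow$ bivariant equivalence). The only subtlety to watch is ensuring that a lifting problem against $i\colon M\to N$ over $A\times B$ genuinely lives in $(\sSet)_{/(A\times B)}$, which is automatic once one factors the composite $N\to A\times B$ through the structure maps.
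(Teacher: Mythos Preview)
Your proof is correct and is precisely the argument the paper has in mind: the paper states that this corollary is ``straightforward and left to the reader'', and the intended reasoning is exactly to combine the preceding proposition (categorical equivalences are bivariant equivalences) with the fact that bifibrations over $A\times B$ are fibrant in the bivariant model structure, hence have the right lifting property against monic bivariant equivalences. Your route via Proposition~\ref{prop:char of bifibns between bifibns} is a minor variant of simply invoking Theorem~\ref{thm:biv model str} directly (since $A\times B$ is terminal in $(\sSet)_{/(A\times B)}$), but the content is identical.
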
 

\begin{corollary} 
The bivariant model structure on $(\sSet)_{/(A\times B)}$ is a left Bousfield localization 
of the Joyal model structure on $(\sSet)_{/(A\times B)}$. 
\end{corollary}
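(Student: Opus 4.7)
The plan is to verify the two conditions that characterize the bivariant model structure as a left Bousfield localization of the Joyal model structure on $(\sSet)_{/(A\times B)}$: agreement of cofibrations, and containment of the Joyal weak equivalences in the bivariant equivalences.

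For the cofibrations, Theorem~\ref{thm:biv model str} identifies the bivariant cofibrations with the monomorphisms in $(\sSet)_{/(A\times B)}$, and the Joyal model structure on $\sSet$ has monomorphisms as cofibrations, a property inherited by the slice model structure on $(\sSet)_{/(A\times B)}$. For the weak equivalences, the proposition immediately preceding this corollary shows that every categorical equivalence in $(\sSet)_{/(A\times B)}$ is a bivariant equivalence. Together these two facts imply that the identity functor from the Joyal model structure to the bivariant model structure is a left Quillen functor: it preserves cofibrations by the first observation and, since cofibrations agree, it preserves trivial cofibrations by the second.

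These are precisely the defining properties of a left Bousfield localization (cf.\ Hirschhorn, Chapter 3). Consistent with this picture, the preceding corollary shows that every bifibration is a categorical fibration, so every bivariant-fibrant object of $(\sSet)_{/(A\times B)}$ is Joyal-fibrant; the bifibrations are then cut out among the categorical fibrations over $A\times B$ by an appropriate locality condition. A concrete set of maps at which to localize is provided, for example, by the diagonal maps $\Delta^n\to \Delta^n\times \Delta^n$ over every structure map $\Delta^n\to A\times B$ (Lemma~\ref{lem:diag biv anodyne}); these are bivariant equivalences but are not in general categorical equivalences, and, in the combinatorial left proper setting established by Theorem~\ref{thm:biv model str}, inverting them reproduces the bivariant model structure by uniqueness of left Bousfield localizations. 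Since all the substantive work has been carried out in the preceding proposition and in Theorem~\ref{thm:biv model str}, no substantial obstacle remains; the corollary is essentially a repackaging of these results.
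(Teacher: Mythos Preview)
Your core argument is correct and is exactly the natural one the paper has in mind (it states that this corollary is ``straightforward and left to the reader''): the cofibrations in both model structures are the monomorphisms, and the preceding proposition shows that every categorical equivalence is a bivariant equivalence, so the identity functor is left Quillen and the bivariant model structure is a left Bousfield localization of the Joyal one.

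However, your final paragraph overreaches. The claim that the diagonal maps $\Delta^n\to \Delta^n\times\Delta^n$ (over all structure maps to $A\times B$) form a localizing \emph{set} that recovers the bivariant model structure is not justified: you would need to show that the Joyal-fibrant objects local with respect to these diagonals are exactly the bifibrations, and nothing in the paper establishes this. Lemma~\ref{lem:diag biv anodyne} only says these diagonals are bivariant equivalences, not that they generate the localization. In fact, the Remark immediately following the corollary proposes a different localizing set, namely the special horn inclusions $\Lambda^n_0\subseteq\Delta^n$ and $\Lambda^n_n\subseteq\Delta^n$ from Definition~\ref{def:bifibration}, which is directly tied to the lifting characterization of bifibrations. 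Since identifying a localizing set is not needed for the corollary, you should simply drop that sentence; the rest of your argument stands.
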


\begin{remark} 
We could have obtained the bivariant 
model structure  
by taking a left Bousfield localization of 
the Joyal model structure on $(\sSet)_{/(A\times B)}$ 
at the set of horn inclusions 
$\Lambda^n_0\subseteq \Delta^n$ 
and $\Lambda^n_n\subseteq \Delta^n$ 
in $(\sSet)_{/(A\times B)}$ 
of the form described in Definition~\ref{def:bifibration}.  
However, this approach would require us 
to prove that every bifibration 
is a categorical fibration.  
This is straightforward to prove if 
$A$ and $B$ are $\infty$-categories, 
but it is not a priori
obvious for arbitrary simplicial sets 
$A$ and $B$.  
\end{remark} 

\subsection{Quillen equivalences} 
\label{subsec:quillen equivs}
Recall that there is another subdivision 
functor for simplicial sets introduced in 
\cite{BHM}.  This functor $\sd_2\colon 
\sSet\to \sSet$ is defined by composing  
a functor $X\colon \Delta^{\op}\to \Sets$ 
with the {\em diagonal} functor 
\[ 
\delta\colon \Delta\to \Delta,\quad   
[n] \mapsto [n]\star [n].  
\] 
Just as in the earlier case for Segal's 
subdivision functor in 
Section~\ref{subsec:subdivision}, the  
functor $\sd_2$ can be used to relate the category 
$(\sSet)_{/(A\times B)}$ with the 
category of correspondences from $A$ to $B$.  
The functor $\delta$ induces a functor 
\[
\delta\colon \Delta_{/(A\times B)}\to (\sSet)_{/B\star A}  
\]
which sends a pair $(u,v)\colon \Delta^n\to A\times B$ to  
\[
\delta(u,v) = v\star u\colon \Delta^n\star \Delta^n\to B\star A.  
\]
The functor $\delta$ induces an adjunction 
\[
\delta_!\colon (\sSet)_{/(A\times B)}\rightleftarrows (\sSet)_{/B\star A}\colon \delta^* 
\]
and in fact the functor $\delta^*$ has a further right adjoint 
$\delta_*\colon (\sSet)_{/(A\times B)}\to (\sSet)_{/B\star A}$.

Composing the functor $\delta_!$ with the reflector $L\colon (\sSet)_{/B\star A}\to 
\Corr(A,B)$ (see Remark~\ref{rem:reflector L}) gives rise to a 
functor denoted 
$d_!\colon (\sSet)_{/(A\times B)}\to \Corr(A,B)$.  We have an adjoint pair 
\[
d_!\colon (\sSet)_{/(A\times B)}\rightleftarrows \Corr(A,B)\colon d^*.  
\]
Similarly we have an adjoint pair 
\begin{equation} 
\label{eq:d* quillen adjn}
d^*\colon \Corr(A,B)\rightleftarrows (\sSet)_{/(A\times B)} \colon d_* 
\end{equation}
where $d_*$ is the functor which sends an 
object $X\to A\times B$ in $(\sSet)_{/(A\times B)}$ to 
the correspondence $d_*X$ whose set of $n$-simplices 
is 
\[
(d_*X)_n = (\sSet)_{/(A\times B)}(d^*L(\Delta^n),X) 
\]
where $\Delta^n\to B\star A$ is an $n$-simplex.  

\begin{remark}
\label{rem:delta*}
Analogous to Lemma~\ref{lem:sigma*}, if $f\colon C\to A$ and $g\colon D\to B$ are maps 
of simplicial sets determining objects $D\star C$ and $C\times D$ of $(\sSet)_{/B\star A}$ and 
$(\sSet)_{/(A\times B)}$ respectively, then we have $\delta^*(D\star C) = C\times D$.  
Note also that, analogous to Remark~\ref{rem:calculation of unit map}, 
the functor $\delta^*\colon (\sSet)_{/B\star A}\to (\sSet)_{/(A\times B)}$ sends 
the object $D\sqcup C$ to the initial object $\emptyset$ of 
$(\sSet)_{/(A\times B)}$.  It follows that there is 
a natural isomorphism of functors 
$d^*d_!\simeq \delta^*\delta_!$.     
\end{remark}

\begin{remark} 
\label{rem:BHM subdivision}
The relationship between the subdivision functor $\sd_2$ from 
\cite{BHM} and the functor $d^*$ is as follows.  If $X\in \Corr(A,B)$ 
then there is a pullback diagram of the form 
\[
\begin{tikzcd} 
d^*X \arrow[rr] \arrow[d] & &  \sd_2 X \arrow[d] \\ 
A\times B \arrow[r] & B\times A \arrow[r] & X\times X 
\end{tikzcd} 
\]
where the map $A\times B\to B\times A$ is the switch map 
which interchanges the two factors and where the 
map $B\times A\to X\times X$ is induced by the inclusions 
$B\subseteq X$ and $A\subseteq X$.  
This is analogous to the relationship between $a^*X$ and the 
twisted arrow category, or Segal edgewise subdivision of $X$ 
(see Remark~\ref{rem:alt desc of a*}).
\end{remark} 

\begin{remark} 
\label{rem:geom realization}
Recall (see Lemma 1.1 of \cite{BHM} and 
Proposition (A.1) of \cite{Segal}) that 
for any simplicial set $X$ there are natural  
isomorphisms  
$|\sd_2 X|\simeq |X|$ and $|\Tw(X)|\simeq |X|$ 
on geometric realizations.  In particular there is 
an isomorphism $|\Tw(X)|\simeq |\sd_2 X|$, natural in $X$.  Recall also 
that there is a canonical isomorphism $|X^{\op}|\simeq |X|$
between the geometric realization of a simplicial set, 
and the geometric realization of the opposite simplicial set.  
We claim that the following diagram 
commutes 
\[
\begin{tikzcd} 
{|}\Tw(X){|} \arrow[r] \arrow[d] & {|}X^{\op}{|}\times {|}X{|} \arrow[d] \\ 
{|}\sd_2 X{|} \arrow[r] & {|}X{|}\times {|}X{|} 
\end{tikzcd} 
\]
where the left hand vertical map is the isomorphism mentioned 
above, and the right hand vertical map is the product of   
the canonical isomorphism $|X^{\op}|\simeq |X|$ and the 
identity map on $|X|$. The map $|\Tw(X)|\to |X^{\op}|\times |X|$ 
is induced by the inclusions $[n]^{\op}\subseteq [n]^{\op}\star [n]$ 
and $[n]\subseteq [n]^{\op}\star [n]$.  Similarly the map 
$|\sd_2 X|\to |X|\times |X|$ is induced by the two canonical 
inclusions $[n]\subseteq [n]\star [n]$.  

Since all of the functors involved commute with colimits, it 
suffices by naturality to prove the claim in the special case 
when $X = \Delta^n$.  Since all of the functors involved also 
commute with finite products, and $\Delta^n$ is a retract 
of $(\Delta^1)^n$, it suffices to prove the statement 
when $X = \Delta^1$.   

Under the isomorphism $|\Tw(\Delta^1)|\to |\Delta^1|\simeq [0,1]$ 
(Proposition (A.1) of \cite{Segal}), the 
induced map $|\Tw(\Delta^1)|\to |(\Delta^1)^{\op}|\times |\Delta^1|$ 
corresponds to the map $(f,g)\colon [0,1]\to [0,1]\times [0,1]$ 
where 
\[
f(t) = \begin{cases} 
1-2t & \text{if}\ 0\leq t\leq 1/2, \\ 
0 & \text{if}\ 1/2\leq t\leq 1, 
\end{cases} 
\quad \text{and} \quad 
g(t) = \begin{cases} 
0 & \text{if}\ 0\leq t\leq 1/2, \\ 
2t-1 & \text{if}\ 1/2\leq t\leq 1.  
\end{cases} 
\]
Under the isomorphism $|\sd_2 \Delta^1|\to 
|\Delta^1|\simeq [0,1]$ (Lemma 1.1 of \cite{BHM}), the induced maps 
$|\sd_2 \Delta^1|\to |\Delta^1|\times |\Delta^1|$ corresponds to the 
map $(f',g')\colon [0,1]\to [0,1]\times [0,1]$ where 
\[
f'(t) = \begin{cases} 
2t & \text{if}\ 0\leq t\leq 1/2, \\ 
1 & \text{if}\ 1/2\leq t\leq 1, 
\end{cases} 
\quad \text{and} \quad 
g'(t) = \begin{cases} 
0 & \text{if}\ 0\leq t\leq 1/2, \\ 
2t-1 & \text{if}\ 1/2\leq t\leq 1.  
\end{cases} 
\] 
Under the identification $|\Delta^1|\simeq [0,1]$, 
the isomorphism $|(\Delta^1)^{\op}|\to |\Delta^1|$ 
corresponds to the automorphism of $[0,1]$ which sends 
$t$ to $1-t$.  Clearly the composite of $f$ with this 
automorphism is equal to $f'$.  The claim follows.      
\end{remark} 

Our first 
aim is to prove that the adjunction~\eqref{eq:d* quillen adjn} 
is a Quillen adjunction.  
We have the following result.  

\begin{proposition} 
\label{prop:d^* d_* Quillen adjunction}
Let $A$ and $B$ be $\infty$-categories.  Then the adjunction 
\[
d^*\colon \Corr(A,B)\rightleftarrows (\sSet)_{/(A\times B)}
\colon d_* 
\]
is a Quillen adjunction for the correpondence model structure 
on $\Corr(A,B)$ and the bivariant model structure on 
$(\sSet)_{/(A\times B)}$.  
\end{proposition}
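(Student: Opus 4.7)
The plan is to verify that $d^*$ preserves cofibrations and preserves weak equivalences; this suffices because a trivial cofibration is just a cofibration which is a weak equivalence. Writing $d^* = \delta^*\circ i$, where $i\colon \Corr(A,B)\hookrightarrow (\sSet)_{/B\star A}$ is the full inclusion from Remark~\ref{rem:reflector L}, the problem reduces to two statements about $\delta^*$: that $\delta^*$ preserves monomorphisms, and that $\delta^*$ sends the image under $i$ of a categorical equivalence in $\Corr(A,B)$ to a bivariant equivalence.

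The first is easy: $\delta^*$ admits both a left adjoint $\delta_!$ and a right adjoint $\delta_*$, so in particular it preserves pullbacks and hence monomorphisms (alternatively, this can be extracted directly from Remark~\ref{rem:delta*}).

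For the second, I would adapt the strategy from the proof of Theorem~\ref{thm: (delta_!,d) is a Quillen equivalence}. Given a categorical equivalence $f\colon X\to Y$ in $\Corr(A,B)$, choose a fibrant replacement $j\colon Y\to Y'$ in $\Corr(A,B)$; since $j$ is a trivial cofibration which is a bijection on vertices (maps in $\Corr(A,B)$ fix $A$ and $B$) and $Y'$ is an $\infty$-category, Lemma~\ref{lem:inner anodyne lemma} implies $j$ is inner anodyne. Factor $jf$ in $(\sSet)_{/B\star A}$ as $jf = f'j'$ with $j'$ inner anodyne and $f'\colon X'\to Y'$ an inner fibration; then $f'$ is a categorical equivalence between $\infty$-categories, bijective on vertices, hence a trivial Kan fibration. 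An analogue of Lemma~\ref{lem:sigma_! mono} shows that $\delta_!$ preserves monomorphisms, so $\delta^*$ preserves trivial Kan fibrations, making $d^*(f')$ a bivariant equivalence by Lemma~\ref{lem:triv fibn biv equiv}. Two-out-of-three then reduces matters to the key claim that $\delta^*$ sends inner anodyne maps in $(\sSet)_{/B\star A}$ to bivariant anodyne maps in $(\sSet)_{/(A\times B)}$.

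For the key claim, by Remark~\ref{rem:delta*} and the decomposition of $n$-simplices of $B\star A$, the generating inner horn inclusions in $(\sSet)_{/B\star A}$ fall into four types; two of them (with target factoring through $B$ or through $A$) map under $\delta^*$ to the empty map, while the remaining two map to pushout-products $\Delta^n\times \Lambda^m_k\cup \partial\Delta^n\times \Delta^m\to \Delta^n\times \Delta^m$ (respectively $\Lambda^n_k\times \Delta^m\cup \Delta^n\times \partial\Delta^m\to \Delta^n\times \Delta^m$) in $(\sSet)_{/(A\times B)}$, with structure map $(y\circ\pi_1, x\circ\pi_2)$ coming from $y\colon \Delta^n\to A$ and $x\colon \Delta^m\to B$. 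For strictly interior $k$ these are pushout products of an inner horn with a cofibration, hence inner anodyne in $\sSet$ by Corollary 2.3.2.4 of \cite{HTT}, and therefore bivariant anodyne. The main obstacle is the outer-horn cases, $k = m$ (giving $\Lambda^m_m$ in the $B$-factor) and $k = 0$ (giving $\Lambda^n_0$ in the $A$-factor). To handle the first, I would test the pushout-product against an arbitrary bifibration $X\to Y$ in $(\sSet)_{/(A\times B)}$: by Proposition~\ref{prop:exp}, the map $X^{\Delta^n}\to X^{\partial\Delta^n}\times_{Y^{\partial\Delta^n}} Y^{\Delta^n}$ is a bifibration in $(\sSet)_{/(A^{\Delta^n}\times B^{\Delta^n})}$, and by exponential adjunction the lifting problem transposes to one for $\Lambda^m_m\to \Delta^m$ against this bifibration. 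The transposed structure map $\Delta^m\to A^{\Delta^n}$ is constant because the $A$-component of the original structure map depends only on the $\Delta^n$-factor, so the edge $\Delta^{\{m-1,m\}}$ maps to a degenerate edge of $A^{\Delta^n}$, placing us in the situation of Definition~\ref{def:biv anodyne}(3); the dual argument handles the $\Lambda^n_0$ case via Definition~\ref{def:biv anodyne}(2).
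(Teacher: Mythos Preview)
Your proof is correct and follows essentially the same route as the paper. Both arguments reduce to the key claim that $\delta^*$ sends inner anodyne maps in $(\sSet)_{/B\star A}$ to bivariant anodyne maps in $(\sSet)_{/(A\times B)}$, and both verify this by showing the relevant pushout-products have the left lifting property against every bifibration via an exponentiation/adjunction argument. The only differences are cosmetic: you spell out the reduction to the key claim explicitly (preservation of monomorphisms plus the fibrant-replacement factorization borrowed from the proof of Theorem~\ref{thm: (delta_!,d) is a Quillen equivalence}), whereas the paper simply asserts the reduction; and you invoke Proposition~\ref{prop:exp} directly and treat the inner and outer horn cases separately, whereas the paper cites its repackaging Lemma~\ref{rem:divisible exp}, which absorbs the inner horns and one family of outer horns into a single left-fibration statement (and then appeals to duality for the other).
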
 

\begin{proof} 
Proving the proposition quickly reduces to proving the following claim: the 
functor $\delta^*\colon (\sSet)_{/B\star A}\to (\sSet)_{/(A\times B)}$ 
sends inner anodyne maps in $(\sSet)_{/B\star A}$ to 
bivariant anodyne maps in $(\sSet)_{/(A\times B)}$.  
Recall from the proof of Theorem~\ref{thm: (delta_!,d) is a Quillen equivalence} 
that the inner anodyne maps in $(\sSet)_{/B\star A}$ 
are of the following form: 
\begin{itemize} 
\item $\Lambda^m_k\star \emptyset\to \Delta^m\star \emptyset$, $0<k<n$, 
\item $\Delta^m\star \Lambda^n_k\cup \partial\Delta^m\star \Delta^n \to \Delta^m\star \Delta^{n}$, $m\geq 0$, $0\leq k<n$, 
\item $\Lambda^m_k\star \Delta^{n}\cup \Delta^m\star \partial\Delta^{n}\to \Delta^m\star \Delta^{n}$, $0<k\leq m$, $n\geq 0$,  
\item $\emptyset\star \Lambda^n_k\to \emptyset\star \Delta^n$, $0<k<n$,  
\end{itemize} 
for some simplices $x\colon \Delta^m\to B$ and $y\colon \Delta^n\to A$.  
It follows from Remark~\ref{rem:delta*} that 
it suffices to prove the following two statements: 
the canonical map 
\[
\Lambda^n_k\times \Delta^m\cup  
\Delta^n\times \partial\Delta^m\to  \Delta^n\times \Delta^m
\]
is a bivariant anodyne map in $(\sSet)_{/(A\times B)}$ 
if $m\geq 0$ and $0\leq k<n$; and 
the canonical map  
\[
\Delta^n\times \Lambda^m_k \cup  
\partial\Delta^n \times \Delta^m\to  \Delta^n \times\Delta^m
\]
is a bivariant anodyne map in $(\sSet)_{/(A\times B)}$ 
if $0<k\leq m$ and $n\geq 0$.    
The first statement follows readily from Lemma~\ref{rem:divisible exp} 
and the second statement follows by duality.  
\end{proof} 

Our next aim is to prove that the Quillen adjunction 
from Proposition~\ref{prop:d^* d_* Quillen adjunction} 
is in fact a Quillen equivalence.  
We first need a preliminary result.  

\begin{lemma} 
\label{lem:d^*d_* triv kan fibn}
Let $A$ and $B$ be simplicial sets and 
let $X\to A\times B$ be a bifibration 
in $(\sSet)_{/(A\times B)}$.  Then the counit 
map $d^*d_*X\to X$ is a trivial Kan fibration.  
\end{lemma}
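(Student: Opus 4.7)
The plan is to translate the lifting problem by adjointness, obtain an extension problem in $(\sSet)_{/(A\times B)}$, and solve it using the bifibration property of $X$. First I would unpack $d^*d_*X$ concretely: combining the pullback description from Remark~\ref{rem:BHM subdivision} with the computation $d^*L(\Delta^n\star \Delta^n)\simeq \Delta^n\times \Delta^n$ (which follows from Remark~\ref{rem:delta*} since $\delta^*$ annihilates the $B$- and $A$-parts of $L$), an $n$-simplex of $d^*d_*X$ lying over $(\alpha,\beta)\colon \Delta^n\to A\times B$ is canonically identified with a map $\sigma\colon \Delta^n\times\Delta^n\to X$ over $\alpha\times\beta$, and the counit $\epsilon$ sends $\sigma$ to its restriction along the diagonal $\Delta^n\hookrightarrow \Delta^n\times\Delta^n$.

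With this identification, a lifting problem for $\partial\Delta^n\hookrightarrow \Delta^n$ against $\epsilon\colon d^*d_*X\to X$, with $g\colon \Delta^n\to X$ over $(\alpha,\beta)$, translates into an extension problem
\[
\begin{tikzcd}
K_n \arrow[d,hook] \arrow[r] & X \arrow[d] \\
\Delta^n\times \Delta^n \arrow[r,"\alpha\times \beta"] \arrow[ur,dashed] & A\times B
\end{tikzcd}
\]
where $K_n:=\bigcup_{I\subsetneq [n]}\Delta^I\times \Delta^I \cup \Delta^n$, with $\Delta^n$ embedded as the diagonal. The dashed arrow exists for every such problem precisely when $\epsilon$ is a trivial Kan fibration, so it suffices to show that $K_n\hookrightarrow \Delta^n\times \Delta^n$ is bivariant anodyne in $(\sSet)_{/(A\times B)}$ for every choice of $(\alpha,\beta)$; the lift will then exist by the bifibration property of $X\to A\times B$.

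To establish the bivariant anodyne claim, my plan is to proceed by induction on $n$ and exploit the Eilenberg--Zilber decomposition of $\Delta^n\times \Delta^n$ into nondegenerate simplices indexed by chains in $[n]\times [n]$. These chains should be adjoined to $K_n$ in a carefully chosen order so that each addition appears as a pushout of a generating bivariant anodyne map from Proposition~\ref{prop:alt desc biv anodyne}: chains staying weakly below the diagonal (i.e.\ ``go right then up'') are adjoined via $\Lambda^m_0$-horn inclusions whose distinguished edge has $\pi_B$-degenerate image, so that condition~(2) of Definition~\ref{def:biv anodyne} applies, while chains weakly above the diagonal are adjoined dually via $\Lambda^m_m$-horns satisfying the $\pi_A$-degeneracy hypothesis of condition~(3); inner horn fillings handle the compatibilities where the two layers meet. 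The main obstacle will be orchestrating this filtration so that at every step the newly attached simplex is glued along a subcomplex that already contains the distinguished edge, and so that this edge in turn is projected to a degeneracy in the correct factor. The crucial point making the argument possible is that the diagonal $\Delta^n\hookrightarrow K_n$ supplies exactly the ``long'' interior edges needed so that each horn extension factors as a genuine bivariant anodyne step; without the diagonal being present in $K_n$, a mere doubled-boundary inclusion $F(\partial\Delta^n)\hookrightarrow \Delta^n\times\Delta^n$ would not be bivariant anodyne.
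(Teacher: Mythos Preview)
Your adjointness reduction is exactly right and matches the paper: the lifting problem against $\partial\Delta^n\subseteq\Delta^n$ becomes an extension problem for the inclusion
\[
K_n=\Bigl(\bigcup_{I\subsetneq[n]}\Delta^I\times\Delta^I\Bigr)\cup\Delta^n\ \hookrightarrow\ \Delta^n\times\Delta^n
\]
in $(\sSet)_{/(A\times B)}$, with structure map $f\times g$ (where $(f,g)\colon\Delta^n\to A\times B$).

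The gap is your claim that this inclusion is \emph{bivariant anodyne}. Already for $n=1$ one has $K_1=\Delta^1$ (the diagonal), and the paper explicitly notes (Remark after Lemma~\ref{lem:diag biv anodyne}) that $\Delta^1\to\Delta^1\times\Delta^1$ is in general \emph{not} bivariant anodyne. Concretely: once you have filled the lower triangle $(0,0)\to(1,0)\to(1,1)$ via a $\Lambda^2_0$-horn (which works, since the $\{0,1\}$-edge $(0,0)\to(1,0)$ has degenerate $B$-image), you are stuck on the upper triangle $(0,0)\to(0,1)\to(1,1)$. Attaching it as a $\Lambda^2_0$-horn would require the edge $(0,0)\to(0,1)$ to have degenerate $B$-image, which it need not; attaching it as a $\Lambda^2_2$-horn would require $(0,1)\to(1,1)$ to have degenerate $A$-image, which it need not; and for a $\Lambda^2_1$-filling neither outer face is available. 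No ordering of the Eilenberg--Zilber cells rescues this, so the combinatorial filtration you sketch cannot go through.

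What does go through is the weaker statement that $K_n\hookrightarrow\Delta^n\times\Delta^n$ is a \emph{monic bivariant equivalence} (an acyclic cofibration in the bivariant model structure). This still suffices, since a bifibration $X\to A\times B$ is fibrant. The paper proves this by first showing (Lemma~\ref{lem:diag biv anodyne}) that each diagonal $\Delta^n\to\Delta^n\times\Delta^n$ is a bivariant equivalence---using that it is a dominant map (Lemma~\ref{lem:diagonal dominant}) together with the characterization of bivariant equivalences in Theorem~\ref{thm:char of biv eq}---and then running an induction over the skeletal filtration, invoking left properness of the bivariant model structure. So you should replace the bivariant-anodyne claim with a bivariant-equivalence claim and appeal to those results rather than to a direct horn-filling argument.
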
 

\begin{proof} 
By adjointness it suffices to prove 
that the induced map 
\[
d^*d_!\partial\Delta^n\cup 
{\partial\Delta^n}\to 
d^*d_!\Delta^n 
\]
is an acyclic cofibration in the 
bivariant model structure 
for every boundary inclusion 
$\partial\Delta^n\subseteq \Delta^n$ 
in $(\sSet)_{/(A\times B)}$.  

The unit map 
$\Delta^n\to d^*d_!\Delta^n$ is a bivariant 
equivalence (Lemma~\ref{lem:diag biv anodyne}).  Therefore, by a
2-out-of-3 argument, it suffices to prove 
that the unit map $S\to d^*d_!S$ is a monic 
bivariant equivalence for every object 
$S\to A\times B$ in $(\sSet)_{/(A\times B)}$.  
Recall (Remark~\ref{rem:delta*}) that 
there is an isomorphism $d^*d_!S = \delta^*\delta_!S$.    

Using the skeletal filtration of $S$ we see that 
by an induction argument we are reduced to the 
case where $S$ is obtained from $S'$ by adjoining 
a single $n$-simplex along an attaching map 
$\partial\Delta^n\to S'$ in $(\sSet)_{/(A\times B)}$.  
We have a commutative diagram in 
$(\sSet)_{/(A\times B)}$ of the form 
\[
\begin{tikzcd} 
\Delta^n \arrow[d] & \arrow[l]
\partial\Delta^n \arrow[d] \arrow[r] & S' \arrow[d] \\ 
\delta^*\delta_! \Delta^n & \arrow[l] 
\delta^*\delta_!\partial\Delta^n \arrow[r] 
& \delta^*\delta_!S' 
\end{tikzcd} 
\]
in which the middle and right hand vertical maps 
are monic bivariant equivalences by the induction 
hypothesis.  
A straightforward argument, using the fact that 
the bivariant model structure is left proper, 
shows that the induced map 
\[
S = \Delta^n\cup_{\partial\Delta^n}S' \to 
\delta^*\delta_!S = \delta^*\delta_!\Delta^n 
\cup_{\delta^*\delta_!\partial\Delta^n} 
\delta^*\delta_!S' 
\]
is a bivariant equivalence.  To close the inductive 
loop we need to prove that $S\to d^*d_!S$ is monic.  
For this it suffices to prove that for any $n\geq 0$ 
the square 
\[
\begin{tikzcd} 
\partial\Delta^n \arrow[r] \arrow[d] & 
\delta^*\delta_!\partial\Delta^n \arrow[d] \\ 
\Delta^n \arrow[r] & \delta^*\delta_!\Delta^n 
\end{tikzcd} 
\]
is a pullback in $(\sSet)_{/(A\times B)}$.  The proof 
of this is completely analogous to the proof 
of the corresponding fact in the proof of 
Theorem~\ref{thm: (delta_!,d) is a Quillen equivalence} 
and is omitted.  
\end{proof} 

\begin{theorem}
\label{thm:d^* d_* QE}
Let $A$ and $B$ be $\infty$-categories.  
The Quillen adjunction 
\[
d^*\colon \Corr(A,B)\rightleftarrows (\sSet)_{/(A\times B)} \colon 
d_* 
\]
of Proposition~\ref{prop:d^* d_* Quillen adjunction} 
is a Quillen equivalence.  
\end{theorem}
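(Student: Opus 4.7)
The plan is to apply a standard criterion for Quillen equivalences: since every object of $(\sSet)_{/(A\times B)}$ is cofibrant, Lemma~\ref{lem:d^*d_* triv kan fibn} says that the derived counit $d^*d_*X\to X$ is a weak equivalence at every bifibration $X$, so the Quillen pair $(d^*,d_*)$ of Proposition~\ref{prop:d^* d_* Quillen adjunction} is a Quillen equivalence if and only if $d^*$ reflects weak equivalences between cofibrant objects. It therefore suffices to prove the following claim: if $f\colon X\to Y$ is a map in $\Corr(A,B)$ such that $d^*f$ is a bivariant equivalence, then $f$ is a categorical equivalence.

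First I would fibrantly replace $X$ and $Y$ in the correspondence model structure to reduce to the case in which $X$ and $Y$ are $\infty$-categories; this reduction is valid because $d^*$ is left Quillen and every object is cofibrant, so by Ken Brown's lemma $d^*$ preserves the categorical equivalences arising from fibrant replacement, and $2$-out-of-$3$ applied to the naturality squares does the rest. Since the vertex set of any correspondence decomposes as $A_0\sqcup B_0$ by definition and $f$ is the identity on $A\sqcup B$, the map $f$ is bijective on objects, so essential surjectivity is automatic. Because a correspondence admits no morphism from $A$ to $B$, and the mapping spaces inside $A$ or inside $B$ coincide in $X$ and $Y$, the only remaining content in the full faithfulness of $f$ is that the induced map $\Hom^L_X(b,a)\to\Hom^L_Y(b,a)$ is a weak homotopy equivalence for every $a\in A$ and $b\in B$.

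To extract this from the hypothesis, I would apply Theorem~\ref{thm:char of biv eq}\,(iii) to the bivariant equivalence $d^*f$ with the factorisations $\{a\}\to A_{/a}\to A$ (right anodyne followed by right fibration) and $\{b\}\to B_{b/}\to B$ (left anodyne followed by left fibration). This yields that
\[
A_{/a}\times_A d^*X\times_B B_{b/}\;\longrightarrow\; A_{/a}\times_A d^*Y\times_B B_{b/}
\]
is a weak homotopy equivalence. Using the pullback description of $d^*$ from Remark~\ref{rem:BHM subdivision}, this fiber product rewrites as $(A_{/a}\times B_{b/})\times_{X\times X}\sd_2 X$, where the structural map $A_{/a}\times B_{b/}\to X\times X$ sends a pair $(\alpha,\beta)$ to $(\beta(0),\alpha(0))$ via the inclusions $B\subseteq X$ and $A\subseteq X$.

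The remaining task, and the hard part of the argument, is to identify this fiber product up to weak equivalence with $\Hom^L_X(b,a)$. My plan is first to contract the slices $A_{/a}$ and $B_{b/}$ to the vertices $a$ and $b$ by a smoothness argument modelled on the proof of Theorem~\ref{thm:char of biv eq} (using Proposition 4.1.2.15 of \cite{HTT}), reducing the fiber product to the fiber $(\sd_2 X)(b,a)$. Then I would apply the canonical isomorphism $|\sd_2 X|\simeq|\Tw(X)|$ of Remark~\ref{rem:geom realization}, compatible with the two projections (modulo the swap of the target's factors), to identify $(\sd_2 X)(b,a)$ weakly with $\Tw(X)(b,a)$; Remark~\ref{rem:mapping spaces} then identifies the latter with $\Hom^L_X(b,a)$. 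The same chain applies naturally to $Y$, completing the proof. The principal obstacle will be this final chain of identifications, in particular promoting the geometric-realisation comparison between $\sd_2 X$ and $\Tw(X)$ to a weak equivalence of the appropriate Kan complexes of arrows, and carefully verifying the smoothness hypotheses that enable the contraction of $A_{/a}$ and $B_{b/}$ inside the fibered product.
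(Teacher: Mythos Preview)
Your overall strategy is sound and shares the paper's key ingredient, namely the geometric-realisation comparison of Remark~\ref{rem:geom realization} between $\sd_2 X$ and $\Tw(X)$.  The paper, however, routes the argument differently after rewriting the fiber product via Remark~\ref{rem:BHM subdivision}: it does \emph{not} contract the slices or pass to mapping spaces.  Instead it applies the geometric-realisation comparison directly to the full pullbacks
\[
d^*X\times_{A\times B}(A_{/a}\times B_{b/})\;\simeq\;\sd_2 X\times_{X\times X}(A_{/a}\times B_{b/}),
\]
obtaining (using $(B_{b/})^{\op}\simeq B^{\op}_{/b}$) a natural homeomorphism with the realisation of $a^*X\times_{B^{\op}\times A}(B^{\op}_{/b}\times A_{/a})$.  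By Theorem~\ref{thm:7 from covariant paper} these are exactly the test objects detecting covariant equivalences, so $d^*f$ is a bivariant equivalence iff $a^*f$ is a covariant equivalence; the conclusion then follows from the already-established fact (Theorem~\ref{thm: (delta_!,d) is a Quillen equivalence}) that $a^*$ reflects weak equivalences.  This bypasses both your fibrant-replacement step and the full-faithfulness reduction.

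Your plan has a gap at the contraction step.  The smoothness argument you model on the proof of Theorem~\ref{thm:char of biv eq} requires the structure map to $A\times B$ to be a bifibration, but $d^*X\to A\times B$ is not known to be one: $d^*$ is the \emph{left} Quillen functor here, so it need not preserve fibrant objects, and the paper never establishes that $\sd_2 X\to X\times X$ is a bifibration for an $\infty$-category $X$.  You could repair this by reversing the order of your last two moves: apply the geometric-realisation comparison \emph{before} contracting, passing to $\Tw(X)$-pullbacks where $\Tw(X)\to X^{\op}\times X$ is a left fibration (Corollary~\ref{corr:d preserves left fibns}), and then contract on that side.  Once you do this you have essentially reproduced the paper's argument, with the endgame via $a^*$ and Theorem~\ref{thm:7 from covariant paper} replacing your appeal to Remark~\ref{rem:mapping spaces}.
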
 

\begin{proof} 
From Lemma~\ref{lem:d^*d_* triv kan fibn} we have 
that the counit map $d^*d_*X\to X$ is a trivial 
Kan fibration whenever $X\to A\times B$ is a 
bifibration.  Therefore it suffices to 
prove that $d^*$ reflects weak equivalences.  
Suppose then that $X\to Y$ is a map in 
$\Corr(A,B)$ such that the image 
$d^*X\to d^*Y$ is a bivariant equivalence.  
Therefore, by Theorem~\ref{thm:char of biv eq}, 
we have that the induced map $d^*X\times_{A\times B} 
(A_{/a}\times B_{b/})\to 
d^*Y\times_{A\times B}(A_{/a}\times B_{b/})$ is 
a weak homotopy equivalence for all vertices $a\in A$ 
and $b\in B$.  From Remark~\ref{rem:BHM subdivision}  
we see that there is an isomorphism 
\[
d^*X\times_{A\times B}(A_{/a}\times B_{b/}) = 
\sd_2 X\times_{X\times X}(A_{/a}\times B_{b/}) 
\]
Using Remark~\ref{rem:geom realization} 
together with the fact that there is 
an isomorphism $(B_{b/})^{\op}\simeq (B^{\op}_{/b})^{\op}$ we 
see that there is an isomorphism 
\[
|d^*X\times_{A\times B}(A_{/a}\times B_{b/})| 
\simeq |a^*X|\times_{|B^{\op}|\times |A|} 
(|B^{\op}_{/b}|\times |A_{/a}|)|
\]
natural in the correspondence $X$.    
It follows that for any vertices $a\in A$ and 
$b\in B$, the induced map 
\[
d^*X\times_{A\times B} 
(A_{/a}\times B_{b/})\to 
d^*Y\times_{A\times B}(A_{/a}\times B_{b/})
\]
is a weak homotopy equivalence if and only 
if the induced map 
\[
a^*X\times_{B^{\op}\times A}(B^{\op}_{/b}\times A_{/a}) 
\to a^*Y\times_{B^{\op}\times A}(B^{\op}_{/b}\times A_{/a}) 
\]
is a weak homotopy equivalence.  Therefore, $d^*X\to d^*Y$ 
is a bivariant equivalence if and only if 
$a^*X\to a^*Y$ is a covariant equivalence, using 
Theorem~\ref{thm:7 from covariant paper}.  Therefore $X\to Y$ is a weak 
equivalence in the correspondence model structure 
since $a^*$ reflects weak equivalences 
(Theorem~\ref{thm: (delta_!,d) is a Quillen equivalence}).
\end{proof} 

\begin{remark} 
Unlike the case for the adjoint pair $(a_!,a^*)$, the 
adjoint pair $(d_!,d^*)$ is not a Quillen adjunction.  
It can be shown (with some work) that the functor 
$d_!\colon (\sSet)_{/(A\times B)}\to \Corr(A,B)$ sends 
maps of the form (2) and (3) from Definition~\ref{def:biv anodyne} 
to inner anodyne maps in $\Corr(A,B)$, but it does 
not in general send inner horn inclusions in $(\sSet)_{/(A\times B)}$ 
to inner anodyne maps in $\Corr(A,B)$.  
\end{remark} 

Finally let us describe another 
Quillen equivalence relating the correspondence model 
structure with the 
bivariant model structure.  
Recall from \cite{AF} the functor 
\[
\Gamma\colon \Corr(A,B)\to (\sSet)_{/(A\times B)} 
\]
which sends a correspondence $X\in \Corr(A,B)$ to 
its simplicial set of sections  
$\Gamma(X) = \map_{\Delta^1}(\Delta^1,X)$.  
The structure map $\Gamma(X)\to A\times B$ 
is induced by the inclusion $\partial\Delta^1
\subseteq \Delta^1$.  The functor $\Gamma$ 
has a left adjoint $C\colon (\sSet)_{/(A\times B)} 
\to \Corr(A,B)$ which sends an 
object $(f,g)\colon X\to A\times B$ in $(\sSet)_{/(A\times B)}$ 
to the correspondence 
\[
C(X) = (X\times \Delta^1)\cup_{X\times \partial\Delta^1}(B\times A) 
\]    
where the map $X\times \partial\Delta^1\to A\times B$ restricts 
to $g$ on $X\times \set{0}$ and restricts to $f$ on $X\times \set{1}$.  

We then have the following result 
from \cite{AF}.  

\begin{theorem}[\cite{AF}] 
\label{thm:AF thm}
Let $A$ and $B$ be $\infty$-categories.  The adjoint pair 
\[
C\colon (\sSet)_{/(A\times B)} 
\rightleftarrows \Corr(A,B)\colon \Gamma 
\]
is a Quillen equivalence for the bivariant 
model structure on $(\sSet)_{/(A\times B)}$ 
and the correspondence model structure on 
$\Corr(A,B)$.  
\end{theorem}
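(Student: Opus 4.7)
The plan is to verify that $(C,\Gamma)$ is a Quillen adjunction and then to upgrade it to a Quillen equivalence by comparing $\Gamma$ with the left Quillen equivalence $d^*$ of Theorem~\ref{thm:d^* d_* QE}. The functor $C$ preserves monomorphisms because it is defined as a pushout along a monomorphism. By Proposition~\ref{prop:char of bifibns between bifibns} and Lemma~\ref{lem:char of fibns between fib corr}, showing that $\Gamma$ is right Quillen reduces to showing that $C$ sends every bivariant anodyne map in $(\sSet)_{/(A\times B)}$ to an inner anodyne map of simplicial sets (so that any inner fibration between $\infty$-categories lifts against it by adjointness). Since the class of monomorphisms with this property is weakly saturated, it suffices to check the three generators from Definition~\ref{def:biv anodyne}.

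For an inner horn $\Lambda^n_k\hookrightarrow \Delta^n$ with $0<k<n$, the map $C(\Lambda^n_k)\hookrightarrow C(\Delta^n)$ is a pushout of the pushout-product $(\Lambda^n_k\hookrightarrow \Delta^n)\,\square\,(\partial\Delta^1\hookrightarrow \Delta^1)$, which is inner anodyne. For a left horn $\Lambda^n_0\hookrightarrow \Delta^n$ with structure map $(p,q)\colon \Delta^n\to A\times B$ satisfying $q|_{\Delta^{\{0,1\}}}$ degenerate in $B$, I would use the standard filtration of $\Delta^n\times \Delta^1$ by its non-degenerate $(n+1)$-simplices $\sigma_0,\ldots,\sigma_n$, where $\sigma_i$ is the chain $(0,0),(1,0),\ldots,(i,0),(i,1),\ldots,(n,1)$. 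A direct face-by-face check shows that $\sigma_0$ is the only simplex missing from $\Lambda^n_0\times \Delta^1\cup \Delta^n\times \partial\Delta^1$, and that among the faces of $\sigma_0$ only $d_1\sigma_0$ (obtained by removing the vertex $(0,1)$) is missing; so attaching $\sigma_0$ corresponds to filling the inner horn $\Lambda^{n+1}_1\hookrightarrow \Delta^{n+1}$. For each $i\geq 1$, the identification $(0,0)\sim (1,0)$ forced in $C(\Delta^n)$ by the degeneracy of $q|_{\Delta^{\{0,1\}}}$ makes $\sigma_i$ into a degenerate simplex of the form $s_0\tau_i$ where $\tau_i$ is an $n$-simplex whose projection to $\Delta^n$ is supported on $d_1\subseteq \Lambda^n_0$, and hence already present in $C(\Lambda^n_0)$. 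Thus $C(\Lambda^n_0)\hookrightarrow C(\Delta^n)$ is inner anodyne; the case of right horns is dual.

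For the Quillen equivalence, I construct a natural transformation $\Phi\colon d^*\Rightarrow \Gamma$ of functors $\Corr(A,B)\to (\sSet)_{/(A\times B)}$ and show $\Phi_X$ is a bivariant equivalence on fibrant $X$. For each $n\geq 0$, define $\alpha_n\colon \Delta^n\times \Delta^1\to \Delta^n\star \Delta^n$ over $\Delta^1$ by sending $(i,0)$ to vertex $i$ of the left summand and $(i,1)$ to vertex $n+1+i$ of the right; naturality of $\alpha_n$ in $n$ and precomposition yield, for each correspondence $X$, a natural map $\Phi_X\colon d^*X\to \Gamma X$ in $(\sSet)_{/(A\times B)}$. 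For fibrant $X$, Theorem~\ref{thm:char of biv eq}(iii) reduces the claim that $\Phi_X$ is a bivariant equivalence to verifying that, for each $(a,b)\in A_0\times B_0$ and chosen factorisations $\{a\}\to Ra\to A$, $\{b\}\to Lb\to B$ into anodyne followed by fibration, the induced map $Ra\times_A d^*X\times_B Lb\to Ra\times_A \Gamma X\times_B Lb$ is a weak homotopy equivalence. Both sides are standard models for the mapping space $\Hom_X(b,a)$ in the $\infty$-category $X$ (the target via sections of $X\to \Delta^1$, along the lines of Remark~\ref{rem:mapping spaces}; the source via edgewise subdivision, along the lines of Remarks~\ref{rem:BHM subdivision} and~\ref{rem:geom realization}), and $\Phi_X$ realises the standard comparison between them. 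Once this is established, since $d^*$ preserves all bivariant equivalences (every object is cofibrant) and $\Gamma$ preserves weak equivalences between fibrant objects (being right Quillen), for any $X\in \Corr(A,B)$ with fibrant replacement $X\to X'$ the zig-zag $d^*X\to d^*X'\xrightarrow{\Phi_{X'}} \Gamma X'$ exhibits $\mathbf{L}d^*(X)\simeq \mathbf{R}\Gamma(X)$ naturally in the homotopy category. Since $\mathbf{L}d^*$ is an equivalence of homotopy categories by Theorem~\ref{thm:d^* d_* QE}, so is $\mathbf{R}\Gamma$, and $(C,\Gamma)$ is a Quillen equivalence.

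The main obstacle is the combinatorial verification above: identifying $C(\Lambda^n_0)\hookrightarrow C(\Delta^n)$ as an inner horn inclusion (rather than merely a left anodyne map) requires careful tracking of how the degeneracy of $q|_{\Delta^{\{0,1\}}}$ interacts with the pushout defining $C$ and which non-degenerate $(n+1)$-simplices of $\Delta^n\times \Delta^1$ thereby become degenerate in $C(\Delta^n)$. The identification of the fiber-level models of $d^*X$ and $\Gamma X$ with mapping spaces in the third paragraph, though delicate, is essentially a standard comparison of classical models.
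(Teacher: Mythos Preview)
Your combinatorial argument for the left horn case contains a genuine error.  You claim that, for $i\geq 1$, the identification $(0,0)\sim(1,0)$ in $C(\Delta^n)$ forces the $(n{+}1)$-simplex $\sigma_i$ to become degenerate of the form $s_0\tau_i$.  This is false.  The pushout defining $C(\Delta^n)$ is computed levelwise; two simplices of $\Delta^n\times\Delta^1$ are identified in $C(\Delta^n)$ only if both lie in $\Delta^n\times\partial\Delta^1$ and have the same image in $B\sqcup A$.  For $\sigma_i=s_0\tau_i$ one would need $d_0\sigma_i=d_1\sigma_i$ in $C(\Delta^n)$, i.e.\ the $n$-simplices $[(1,0),(2,0),\ldots,(i,0),(i,1),\ldots,(n,1)]$ and $[(0,0),(2,0),\ldots,(i,0),(i,1),\ldots,(n,1)]$ would have to coincide there.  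But each of these contains vertices from both $\Delta^n\times\{0\}$ and $\Delta^n\times\{1\}$, hence neither lies in $\Delta^n\times\partial\Delta^1$, and they remain distinct in the pushout.  Already for $n=1$ one sees directly that $\sigma_1=[(0,0),(1,0),(1,1)]$ survives as a non-degenerate $2$-simplex of $C(\Delta^1)$, and after attaching $\sigma_0$ via an inner $\Lambda^2_1$-horn one is left with a $\Lambda^2_0$-horn for $\sigma_1$, not an inner horn.  So $C$ does not obviously send bivariant anodyne maps to inner anodyne maps.

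The desired conclusion---that $\Gamma$ is right Quillen---is still true, but the correct argument does not go through ``$C(u)$ is inner anodyne''.  What one actually needs is that every inner fibration between fibrant correspondences has the right lifting property against $C(u)$; equivalently (Lemma~\ref{lem:char of fibns between fib corr}) that $C(u)$ is a trivial cofibration in the correspondence model structure.  In the filtration you describe, the non-inner horns that arise all have their initial (or terminal) edge mapping to a degenerate edge of $B$ (or $A$), hence to an equivalence in any $\infty$-category; such special outer horns lift against categorical fibrations between $\infty$-categories.  This is exactly the content of the ``straightforward modification of Proposition~2.4.7.10 of \cite{HTT}'' to which the paper's sketch refers.

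For the Quillen equivalence, the paper's route is different from yours: it argues that $C$ reflects weak equivalences (as in \cite{AF}) and then invokes Proposition~B.3.17 of \cite{HA}.  Your route---constructing a natural comparison $\Phi\colon d^*\Rightarrow\Gamma$ and transporting the equivalence of Theorem~\ref{thm:d^* d_* QE} across it---is a reasonable alternative and has the virtue of reusing work already done in the paper.  However, the step ``both sides are standard models for $\Hom_X(b,a)$ and $\Phi_X$ realises the standard comparison'' is doing real work and is not justified by the remarks you cite: you must actually prove that the induced map on fibers $(d^*X)_{(a,b)}\to(\Gamma X)_{(a,b)}$ is a weak homotopy equivalence for every fibrant $X$, not merely that each fiber is abstractly equivalent to the same mapping space.
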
 

We only sketch the proof, since 
an $\infty$-categorical version  can be 
found in \cite{AF}.  

\begin{proof}[Sketch of proof]
The proof that $\Gamma$ is a right Quillen functor is a 
straightforward modification of the proof of 
Proposition 2.4.7.10 of \cite{HTT}.  It can be 
shown that the functor $C$ reflects 
weak equivalences (see \cite{AF}).  The result then 
follows from Proposition B.3.17 of \cite{HA}.  
\end{proof} 

\subsection*{Acknowledgements} I thank Thomas Nikolaus for some 
useful conversations.


\begin{thebibliography}{99}

\bibitem{AR} 
J.\ Ad\'{a}mek and J.\ Rosick\'{y}, {\sl Locally presentable and accessible categories}, London 
Math.\ Soc.\ Lecture Note Series, {\bf 189}, Cambridge University Press, Cambridge, 1994.  

\bibitem{AF} 
D.\ Ayala and J.\ Francis, Fibrations of $\infty$-categories, pre-print, available as 
{\tt arXiv:1702.02681} 

\bibitem{BG} 
C.\ Barwick and S.\ Glasman, On the fiberwise effective 
Burnside $\infty$-category, pre-print, available as {\tt arXiv:1607.02786}

\bibitem{BSP} 
C.\ Barwick and C.\ Schommer-Pries, On the unicity of the 
homotopy theory of higher categories, {\tt arXiv:1112.0040} (2011). 

\bibitem{BHKKRS} 
M.\ Bayeh, K.\ Hess, V.\ Karpova, M.\ K\c{e}dziorek, E.\ Riehl 
and B.\ Shipley, Left-induced model structures and diagram categories, 
{\sl Women in topology: collaborations in homotopy theory}, 49--81, 
Contemp.\ Math., {\bf 641}, Amer.\ Math.\ Soc., Providence, RI, 2015.  

\bibitem{BHM} 
M.\ B\"{o}kstedt, W.\ C.\ Hsiang and I.\ Madsen, The cyclotomic trace and 
the algebraic $K$-theory of spaces, {\sl Invent.\ Math.,} {\bf 111} (1983), no.~3, 465--539.   

\bibitem{GR} 
D.\ Gaitsgory and N.\ Rozenblyum, {\sl A study in derived algebraic 
geometry, Vol.~I, Correspondences and duality}, Mathematical 
Surveys and Monographs {\bf 221}, American Mathematical Society, 
Providence, RI, 2017.  

\bibitem{GJ} 
P.\ Goerss and J.\ F.\ Jardine, {\sl Simplicial 
homotopy theory}, Progress in Mathematics {\bf 174}, 
Birkh\"{a}user Verlag, Basel, 1999.

\bibitem{H} 
R.\ Haugseng, Bimodules and natural transformations for 
enriched $\infty$-categories, {\sl Homology, Homotopy, Appl.} {\bf 18} 
(2016), no.~1, 71--98.  

\bibitem{J1} 
A.\ Joyal, {\sl The theory of quasicategories and its applications}, preprint, available for download at 
{\tt mat.uab.cat/\textasciitilde kock/crm/hocat/advanced-course/Quadern45-2.pdf}.  

\bibitem{J2} 
A.\ Joyal, Quasi-categories and Kan complexes, {\sl J.\ Pure Appl.\ Alg.}, 
{\bf 175} (2002) 207--222.  

\bibitem{JT1}
A.\ Joyal and M.\ Tierney, Quasi-categories vs Segal spaces, in {\sl Categories, algebra, geometry and 
mathematical physics, 277--326}, Contemp.\ Math.\ {\bf 431}, Amer.\ Math.\ Soc., Providence, RI, 2007.  

\bibitem{HTT} 
J.\ Lurie, {\sl Higher topos theory}, Annals of Mathematics Studies {\bf 170}, Princeton University Press, 
Princeton, NJ, 2009.  

\bibitem{HA} 
J.\ Lurie, {\sl Higher algebra}, {\tt http://www.math.harvard.edu/\textasciitilde lurie/papers/HA.pdf} (2017).   

\bibitem{RV1} 
E.\ Riehl and D.\ Verity, Kan extensions and the calculus of modules 
for $\infty$-categories, {\sl Algebr.\ Geom.\ Topol.}, {\bf 17} (2017), 
no.~1, 189--271.  

\bibitem{Segal} 
G.\ Segal, Configuration-spaces and iterated loop-spaces, {\sl Invent.\ Math.}, 
{\bf 21} (1973), 213--221.  

\bibitem{S1} 
D.\ Stevenson, Covariant model structures and simplicial localization, {\sl North-West.\ Eur.\ J.\ Math.}, Vol.~3 (2017), 
141--202.  

\bibitem{S2} 
D.\ Stevenson, Stability for inner fibrations revisited, {\sl Theory Appl.\  
Categ.}, Vol.~{\bf 33}, No.~19, (2018), pp.~523--536.  

\bibitem{Street1} 
R.\ Street, Fibrations and Yoneda's lemma in a $2$-category, 
{\sl Category Seminar (Proc.\ Sem., Sydney, 1972/73)}, pp.~104--133, Springer 
Lecture Notes in Math., Vol.~{\bf 420}, Springer, Berlin, 1974.  

\bibitem{Street2} 
R.\ Street, Fibrations in bicategories, {\sl Cahiers Topologie G\'{e}om Diff\'{e}rentielle} 
{\bf 21} (1980), no.~2, 111--160.  

\end{thebibliography}
\end{document}